\theoremstyle{plain}
 \newtheorem{thm}{Theorem}[section]
 \newtheorem{prop}[thm]{Proposition}
 \newtheorem{lem}[thm]{Lemma}
 \newtheorem{cor}[thm]{Corollary}
 \newtheorem{dfn}[thm]{Definition}
 \newtheorem{rem}[thm]{Remark} 
 \numberwithin{equation}{section}
\renewcommand{\leq}{\leqslant}
\renewcommand{\geq}{\geqslant}
\newcommand{\THH}{\mathrm{THH}}
\newcommand{\HH}{\mathrm{HH}}
\newcommand{\im}{\mathrm{im}}
\newcommand{\Gal}{\mathrm{Gal}}
\newcommand{\Tor}{\mathrm{Tor}}
\newcommand{\Ann}{\mathrm{Ann}}
\newcommand{\Tr}{\mathrm{Tr}}
\newcommand{\Trd}{\mathrm{Trd}}
\newcommand{\Trace}{\mathrm{Trace}}
\newcommand{\op}{\mathrm{op}}
\renewcommand{\P}{\mathfrak{P}}
\newcommand{ \Ap }{A/(p)}
\newcommand{\Z}{\mathbb{Z}}
\newcommand{\Q}{\mathbb{Q}}
\newcommand{\F}{\mathbb{F}}
\newcommand{\ZAp}{\mathbb{Z}(A/(p))}
\newcommand{\blb}{\Big(}
\newcommand{\brb}{\Big)}
\newcommand{\cancel}{}
\title[THH of Maximal Orders in Simple $Q$-algebras]{Topological Hochschild Homology of Maximal Orders in Simple $\mathbb{Q}$-algebras.}
\author[H. Chan and A. Lindenstrauss]{\bfseries Henry Yi-Wei Chan and Ayelet Lindenstrauss}
\begin{document}

\vspace{18mm}
\setcounter{page}{1}
\thispagestyle{empty}

\begin{abstract} 
We calculate the topological Hochschild homology groups of a maximal order in a simple algebra over the rationals.   Since the positive-dimensional  $\THH$ groups consist only of torsion, we do this one prime ideal at a time for all the nonzero prime ideals in the center of the maximal order.   This allows us to reduce the problem to studying the topological Hochschild homology groups of maximal orders $ A $ in simple $\mathbb{Q}_p$-algebras.  We show that  the topological Hochschild homology of $ A /(p)$ splits as the tensor product of its Hochschild homology with $\THH_*(\mathbb{F}_p)$.  We use this result in Brun's spectral sequence to calculate
$\THH_*(  A ;  A /(p))$, and then we analyze the torsion to get $\pi_*( \THH(  A )^\wedge_p)$.
\end{abstract}

\maketitle

\section{Introduction}

Throughout this paper let $B$ be a central simple $\mathbb{Q}$-algebra, and let  $U$ be a maximal order in $B$.  We calculate the homotopy groups of the topological Hochschild homology spectrum $\THH(U)$. 

Topological Hochschild homology is the ring spectrum analog of Hochschild homology for rings.  It was originally defined by B\"okstedt in \cite{Bo}, and after the introduction of a strictly  associative product on ring spectra by Elmendorf, Kriz, Mandell, and May in \cite{EKMM}, it can be defined completely analogously to the definition of the Hochschild homology of a ring.  When we talk about topological Hochschild homology of a ring, we actually mean the topological Hochschild homology of that ring's Eilenberg Mac Lane spectrum.  Its homotopy groups turn out to be a finer and more interesting invariant than the Hochschild homology groups of the ring.  Moreover, as conjectured by  Tom Goodwillie, the Dennis trace map from algebraic K-theory to Hochschild homology factors through topological Hochschild homology.  So topological Hochschild homology is a closer approximation of algebraic K-theory which while being harder to calculate than Hochschild homology  is still much easier to calculate than algebraic K-theory.  As will be discussed later in this introduction, the map from the topological Hochschild homology of the maximal order $U$ to its Hochschild homology vanishes in high enough dimension, showing that the original Dennis trace map is also trivial in those high dimensions.   Work of B\"okstedt, Hsiang, and Madsen in \cite{BHM93} further refines the Dennis trace by factoring it through topological cyclic homology, which is an excellent approximation of algebraic K-theory.

 Our main result is:

\noindent {\bf{Theorem
\ref{main}}.}
Let $B$ be a simple algebra over $\mathbb{Q}$, and let $U$ be a maximal order in it.  Let $C$ be the center of $B$, and let $V$ be its ring of integers.   For every nontrivial prime ideal $\P\subset V$, the completion $B^\wedge_\P$ is a central simple $C^\wedge_\P$-algebra, and so $B^\wedge_\P$ is isomorphic to a matrix ring on some central division algebra $D_\P$ over $C^\wedge_\P$, of degree $e_\P$.  Let $\F_\P= V/\P$.  Then we have $V$-module isomorphisms
$$
\THH_*(U)\cong
\begin{cases}
V \oplus  \bigoplus_{\P\subset V \ \mathrm{prime}} \F_\P ^{\oplus e_\P-1} & *=0\\ 
 \THH_{2a-1} (V)  & *=2a-1> 0\\
  \bigoplus_{\P\subset V \ \mathrm{prime}} \F_\P ^{\oplus e_\P-1} & *=2a>0\\
 0 & *<0.
 \end{cases}
$$

\smallskip\noindent For the number ring $V$, its topological Hochschild homology was calculated by Ib Madsen and the second author,

\noindent{\bf{Theorem 1.1 of \cite{LM00}.}}
The nonzero topological Hochschild homology groups of a number ring $V$ are 
$$\THH_0(V)=V, \quad \THH_{2a-1} (V) = {\mathscr{D}}_V^{-1} /aV \quad(a>0),$$
where ${\mathscr{D}}_V$ is the different ideal.

\smallskip
There are many descriptions of the different ideal, but in the case that $V$ is of the form $\Z[x]/f(x)$ for some monic polynomial $f$ with integer coefficients, ${\mathscr{D}}_V$ is the ideal in $V$ generated by the derivative $f'(x)$ and particularly if $V=\Z$, ${\mathscr{D}}_V=\Z$ as well.  
 
Note that being a maximal order in $B$ does not uniquely determine $U$, even up to isomorphism.  The homotopy groups we get in our calculation of $\THH_*(U)$ will, however, be isomorphic for all the maximal orders $U$ in a fixed $B$---see Remark \ref{indep} below.   

The maximal order $U$ is said to be \emph{ramified} at a prime ideal $\P$ of its center $V$ if the degree $e_\P$ of the division algebra $D_\P$  over its center is greater than one.  Note that if $U$ is unramified at $\P$, $B^\wedge_P$ is isomorphic to the  ring of $i\times i$ matrices over $C^\wedge_P$ for some positive integer $i$, and then the same must be true for their valuation rings $U^\wedge_P$ and $V^\wedge_P$ by \cite[Theorem X.1]{Weil}.  Then   by Morita equivalence \cite[Proposition 3.9]{BHM93}, 
$\THH(U^\wedge_P)\simeq \THH(M_i(V ^\wedge_P))\simeq \THH(V^\wedge_P)$, which is reflected in the  fact that there is no $\P$-torsion in even dimensions in the result of Theorem \ref{main}.

 In even dimensions, we get that  $\THH_{2a}(U) \cong \HH_{2a}(U)$, which was calculated by Michael Larsen in  \cite{L95} and is given in Equation (\ref{globalHH}) below.  However, the
 linearization map $\THH_*(U)\to \HH_*(U)$ induced by sending  $(HU)^{\wedge (\ell+1)}$ to its components $U^{\otimes (\ell+1)}$ in each simplicial degree $\ell$ does not induce this isomorphism---in fact, it becomes trivial on the even-dimensional $p$-torsion when $*>  2p-1$.  It therefore becomes the zero map in even dimensions if $*> 2p-1$ when $p$ is the greatest  prime for which $e_\P >1$ for some prime ideal $\P$ of $V$ which contains $(p)$.  In odd dimensions, we just get the linearization map for the center, $\THH_{2a-1}(V) \to \HH_{2a-1}(V)$, which also becomes the zero map for high enough dimensions.
Since the Dennis trace map from algebraic K-theory to Hochschild homology factors through topological Hochschild homology via the linearization map, this gives topological Hochschild homology the potential of being a much better approximation to the higher algebraic K-groups than Hochschild homology is.

We begin with  the spectral sequence from \cite[Corollary 3.3]{Li00},
\begin{equation}\label{SpecSeq}
E_{r,s}^2 = \HH_r(U; \THH_s(\mathbb{Z} ; U))\Rightarrow \THH_{r+s}(U).
\end{equation}

It shows that $\THH_0(U)=\HH_0(U;  \THH_0(\mathbb{Z} ; U))\cong \HH_0(U)\cong U/[U,U]$.  By   \cite{L95}, $\HH_0(U)$ consists of $V$ and of torsion, and $\HH_r(U)$ consists of torsion for $r>0$.  By Marcel  B\"{o}kstedt's calculation  in \cite{Bo},  $\THH_s(\mathbb{Z})$ is torsion for $s>0$. 
So the spectral sequence shows that $\THH_0(U)$ consists of $V\oplus {\mathrm {torsion}}$ and that  for $*>0$, $\THH_*(U)$ consists entirely of torsion.   
To understand the $p$-torsion for a prime $p$, by  \cite[Addendum 6.2]{HM97}, we know that 
\begin{equation}\label{localize}
\THH(U)^{\wedge}_p\simeq \THH(U\otimes \mathbb{Z}_p)^{\wedge}_p,
\end{equation}
so it is enough to study the $p$-torsion in $\THH_*(U\otimes \mathbb{Z}_p)$.

The ideal $(p)\subseteq V$ breaks down as $\P_1^{a_1}\cdots \P_k^{a_k}$ for distinct prime ideals $\P_i$ in $V$.  Then 
$V\otimes\Z_p\cong V^\wedge_{(p)} \cong \bigoplus_{i=1}^k V^\wedge_{\P_i}$ 
and 
$U\otimes\Z_p\cong U^\wedge_{(p)} \cong \bigoplus_{i=1}^k U^\wedge_{\P_i}$ 
and so
$$\THH_*(U\otimes\Z_p) \cong  \bigoplus_{i=1}^k \THH_*(U^\wedge_{\P_i})$$
as $V$-modules, and we need to compute the latter.  Since $U$ is a central simple $V$-algebra, for every $i$ the localization at the prime ideal $U^\wedge_{\P_i}$ is a central simple $V^\wedge_{\P_i}$-algebra.   Any central simple $V^\wedge_{\P_i}$-algebra is a matrix algebra over a finite-dimensional division algebra over $V^\wedge_{\P_i}$, and as mentioned above,  \cite{BHM93} shows the Morita equivalence of   topological Hochschild homology.  Theorem \ref{main} is therefore proved by assembling, over all nontrivial prime ideals $\P\subset V$, the result:

\noindent {\bf{Theorem
\ref{localmainthm}}.}
Let $D$ be a finite-dimensional division algebra over $\Q_p$ and let $A$ be a maximal order in $D$.  Let $L$ be the center of $D$, and let $S$ be its valuation ring and $\F_S$ the residue field of $S$; we can write $S=R[\pi]/P(\pi)$, for $R$ unramified over $\Z_p$, $\pi$ a uniformizer of $S$, and $P$ an Eisenstein polynomial.   Assume that $D$ is of degree $n$ over $L$ (that is, of dimension $n^2$ over $L$).
Then
$$
\pi_*( \THH(A)^\wedge_p)\cong
\begin{cases}
S \oplus \F_S^{\oplus n-1} & *=0\\ 
S/(aP'(\pi)) & *=2a-1 > 0\\
\F_S^{\oplus n-1} &*=2a>0\\
 0 &  *<0.
 \end{cases}
$$

Note that Theorem \ref{localmainthm} shows that $\pi_*( \THH(A)^\wedge_p)$ is isomorphic to a copy of $\pi_*( \THH(S)^\wedge_p)$, which lives in dimension zero and in odd dimensions, summed with $\F_S^{\oplus n-1}$ in all even dimensions.  When $p$ does not divide $n$, the inclusion $S\hookrightarrow A$
induces a map  sending $\pi_*( \THH(S)^\wedge_p)$ isomorphically to the corresponding part of  $\pi_*( \THH(A)^\wedge_p)$, but if $p$ divides $n$, this is not so.

\medskip
We would like to thank Lars Hesselholt and Michael Larsen for useful conversations about the structure of maximal orders and their invariants, Vigleik Angeltveit for finding a problem in an earlier draft of this paper, and Peter May for his comments on an earlier draft  and  for his guidance.

\section{Basic set-up and Larsen's Hochschild homology calculation }\label{second}

In \cite[Section 3]{L95}, Larsen looks at the following set-up: $K$ is a complete local field with ring of integers $R$, $D$ is a division algebra over $K$, and $A$ is a maximal order in $D$.  He calculates $\HH_*^R(A)$ when the center of $D$, which is denoted $L$, is  separable and totally ramified over $K$.  The separability is not a problem since all the fields in question have characteristic $0$.  We are actually interested in calculating $\HH_*^{\Z_p}(A)$, and there is no reason that the center $L$ of $D$ should be totally ramified over $\Q_p$.  However, if we let $K$ be the maximal unramified extension of $\Q_p$ in $L$ and let $R$ be the valuation ring of $K$, by \cite[Theorem 3.1]{Li00}, we have a spectral sequence
\begin{equation}\label{overramified}
E^2_{s,t} =
\HH^R_s(A; \HH^{\Z_p}_t (R; A)) \Rightarrow \HH^{\Z_p} _{s+t}(A).
\end{equation}
Since $R$ is unramified over $\Z_p$, $\HH^{\Z_p}_*(R;A)$ consists only of $A$ in dimension $0$, so in fact the $E^2$ page is concentrated in the $0$'th row and we get an isomorphism $\HH_*^R(A) \cong \HH_*^{\Z_p}(A)$.  The map $\HH_*^{\Z_p} (A) \to \HH_*^{R}(A)$ which is induced by replacing $\otimes_{\Z_p}$    by $\otimes_R$ induces this isomorphism:  this can be seen by mapping the obviously collapsing spectral sequence
$E^2_{s,t} =
\HH^{\Z_p}_s(A; \HH^{\Z_p}_t (\Z_p; A)) \Rightarrow \HH^{\Z_p} _{s+t}(A)$ into the spectral sequence we are interested in by replacing the ${\Z_p}$'s by $R$'s.

So we let $K$ be the maximal unramified extension of $\Q_p$ in $L$ and get that $L$ is totally ramified over $K$.  In the beginning of \cite[Section 3]{L95}, it is shown that if the degree of $D$ over $L$ is $n$ (that is: the dimension of $D$ over $L$ is $n^2$) then there is a degree $n$ unramified extension $M$ of $L$ whose valuation ring we can call $T$, an element $x\in A$ so that $x^n=\pi$ for a uniformizer $\pi\in S$, and a generator $\sigma \in \Gal(M/L)\cong \Z/n\Z$ so that
$$D\cong M\oplus M\cdot x \oplus \cdots \oplus M\cdot x^{n-1}$$ 
and 
\begin{equation}\label{Astruc}
A \cong T\oplus T\cdot x\oplus T\cdot x^2\oplus\cdots \oplus T\cdot x^{n-1}
\end{equation}
and $mx=x\sigma(m)$ for all $m\in M$.  

Since $L$ is a totally ramified extension of $K$, on the valuation ring $S$ we get that the uniformizer $\pi$ of $S$ satisfies an Eisenstein polynomial $P(z)=z^d+p_{d-1} z^{d-1} +\cdots +p_1z+p_0$ where $d=[L:K]$,  and that
\begin{equation}\label{Sstruc}
S \cong R[\pi]/(P(\pi)).
\end{equation}

Under these conditions, Larsen constructs quasi-isomorphisms both ways (which we will be using later) between the reduced Hochschild complex of $A$ over $R$ and the small complex
\begin{equation}\label{small}
\xymatrix@1{0\quad  & \quad T \quad \ar[l] &\quad T \quad \ar[l]_{\pi(1-\sigma^{-1})} &\quad T \quad \ar[l]_{P'(\pi) \Tr} &\quad T \quad \ar[l]_{\pi(1-\sigma^{-1})} &\quad T\cdots \ar[l]_{P'(\pi)\Tr} },
\end{equation}
where $\Tr =\Tr_{T/S}=1+\sigma+\cdots+\sigma^{n-1}$.   Recall that the complex
\begin{equation}\label{resoln}
\xymatrix@1{ \cdots\quad &\quad  T \quad \ar[l] _{\Tr} &\quad T \quad \ar[l]_{1-\sigma^{-1}} &\quad T \quad \ar[l]_{ \Tr} &\quad T \quad \ar[l]_{1-\sigma^{-1}} &\quad T\cdots \ar[l]_{\Tr} }
\end{equation}
is exact: by  Nakayama's Lemma, it is enough to check exactness on the residue fields $\F_T=T/(\pi)$, $\F_S=S/(\pi)$.  There we know that the 
image of $\Tr_{\F_T/\F_S}$ must be equal to  $\F_S$: it is clearly contained in $\F_S$, it must be an $\F_S$-vector space, but it cannot be $\{0\}$ because then all the $p^{nd}$ elements of $\F_T$ would satisfy the polynomial $x+ x^{p^d}+x^{p^{2d}}+\cdots x^{p^{(n-1)d}} =0$ which has degree $p^{(n-1)d}$.  Once the image of $\Tr_{\F_T/\F_S}$ is known to be equal to $\F_S=\ker (1-\sigma^{-1})$, for dimension reasons the image of $1-\sigma^{-1}$ must also be all of $\ker(\Tr_{\F_T/\F_S})$.  

By  the quasi-isomorphism to the small complex in Equation (\ref{small}) given in the proof  of \cite[Theorem 3.5]{L95} (there is a misprint in the statement of the theorem there), we get the formula
\begin{equation}\label{HHlocal1}
\HH_*^R(A) \cong
\begin{cases}
T/\pi \ker( \Tr_{T/S} )& *=0\\ 
 S/ P'(\pi) S&*=2a-1>0\\
\ker( \Tr_{T/S})/ \pi \ker( \Tr_{T/S} )& *=2a > 0\end{cases}
\end{equation}
as  $S$-modules.  By the exactness of the complex in Equation (\ref{resoln}), we know that 
$S=\ker(1-\sigma^{-1})=\Tr_{T/S}(T)$ 
and $\ker(\Tr_{T/S}) = (1-\sigma^{-1}) (T)$, so we have short exact sequences 
\begin{equation}\label{SESs}
0\to S \to T\to \ker(\Tr_{T/S}) \to 0, \qquad 
0\to\ker(\Tr_{T/S})\to T\to S\to 0.
\end{equation}
  This second short exact sequence has to be split as a sequence of $S$-modules by $S$'s freeness; we also know that as an $S$-module, $T\cong S^{\oplus n}$.  That makes $\ker(\Tr_{T/S})$ a projective module over the discrete valuation ring $S$, hence free.  Since $\ker(\Tr_{T/S}) \oplus S \cong T\cong S^{\oplus n}$ we must have that $\ker(\Tr_{T/S}) \cong S^{\oplus n-1}$ as an $S$-module.   The fact that $\ker(\Tr_{T/S}) $ is free and in particular projective also forces the first short exact sequence to split.
There is however  no reason for the splittings $S\to T$ and  $\ker(\Tr_{T/S}) \to T$ of the two short exact sequences  above to be the obvious inclusions.  In fact, if $n>1$ the splitting $S\to T$ cannot be the obvious inclusion, since $\Tr_{T/S}$ restricted to $S$ is multiplication by $n$.
Nevertheless, understanding the split decompositions into free $S$-modules in both short exact sequences lets us   decompose the result in Equation (\ref{HHlocal1}) and write it more conveniently as $S$-module isomorphisms
\begin{equation}\label{HHlocal}
\HH_*^{\Z_p}(A)\cong \HH_*^R(A) \cong
\begin{cases}
S\oplus \F_S^{\oplus n-1}& *=0\\ 
 S/ P'(\pi) S\cong \HH_{2a-1}^{\Z _p}(S) &*=2a-1>0\\
 \F_S^{\oplus n-1} & *=2a > 0. \end{cases}
\end{equation}
Note that except for the $S$ in dimension zero, these Hochschild homology groups consist entirely of torsion.

\begin{cor}\label{globalcor}{\bf{(To Theorem 3.5 of \cite{L95}.})}
Let $B$ be a simple algebra over $\mathbb{Q}$, and let $U$ be a maximal order in it.  Let $C$ be the center of $B$, and let $V$ be its ring of integers.  For every nontrivial prime ideal $\P\subset V$, the completion $B^\wedge_\P$ is a central simple $C^\wedge_\P$-algebra, and so $B^\wedge_\P$ is isomorphic to a matrix ring on some central division algebra $D_\P$ over $C^\wedge_\P$, of degree $e_\P$.  Let $\F_\P= V/\P$.  Then we have $V$-module isomorphisms
\begin{equation}\label{globalHH}
\HH_*(U)\cong
\begin{cases}
V \oplus  \bigoplus_{\P\subset V \ \mathrm{prime}} \F_\P ^{\oplus e_\P-1} & *=0\\ 
 \HH_{2a-1} (V)  & *=2a-1> 0\\
  \bigoplus_{\P\subset V \ \mathrm{prime}} \F_\P ^{\oplus e_\P-1} & *=2a>0.\\
 \end{cases}
\end{equation}
\end{cor}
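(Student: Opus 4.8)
The plan is to localize prime-by-prime, reduce each local piece to the formula in Equation~(\ref{HHlocal}) by Morita invariance and the structure theory of maximal orders, and then reassemble. As recorded in the introduction, for $r>0$ the group $\HH_r(U)$ is a finitely generated torsion $V$-module, $\HH_0(U)=U/[U,U]$ is a copy of $V$ plus a torsion module, and $\HH_*(U)=0$ for $*<0$; since a finitely generated torsion group is determined by its $p$-primary parts, it is enough to identify $\HH_*(U)\otimes_\Z\Z_p$, as a $V$-module, for each rational prime $p$.

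Fix $p$ and write $(p)=\P_1^{a_1}\cdots\P_k^{a_k}$ in $V$. Flat base change along $\Z\hookrightarrow\Z_p$ gives $\HH_*(U)\otimes_\Z\Z_p\cong\HH^{\Z_p}_*(U\otimes_\Z\Z_p)$, and $U\otimes_\Z\Z_p\cong\prod_{i=1}^k U^\wedge_{\P_i}$; since Hochschild homology carries a finite product of rings to the direct sum of their Hochschild homologies, $\HH^{\Z_p}_*(U\otimes\Z_p)\cong\bigoplus_{i=1}^k\HH^{\Z_p}_*(U^\wedge_{\P_i})$. Each $U^\wedge_{\P_i}$ is a central simple algebra over the local field $C^\wedge_{\P_i}$, hence isomorphic to $M_{m_i}(A_{\P_i})$ for a maximal order $A_{\P_i}$ in the central division algebra $D_{\P_i}$, and by Morita invariance of Hochschild homology (the classical analogue of \cite[Proposition 3.9]{BHM93}) we have $\HH^{\Z_p}_*(U^\wedge_{\P_i})\cong\HH^{\Z_p}_*(A_{\P_i})$. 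Now Equation~(\ref{HHlocal}), applied with $D=D_{\P_i}$, $L=C^\wedge_{\P_i}$, $S=V^\wedge_{\P_i}$, $\F_S=\F_{\P_i}$, and $n=e_{\P_i}$, evaluates $\HH^{\Z_p}_*(A_{\P_i})$ as $V^\wedge_{\P_i}\oplus\F_{\P_i}^{\oplus e_{\P_i}-1}$ in degree $0$, as $\HH^{\Z_p}_{2a-1}(V^\wedge_{\P_i})$ in degree $2a-1>0$, and as $\F_{\P_i}^{\oplus e_{\P_i}-1}$ in degree $2a>0$.

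It remains to patch these local answers into the asserted $V$-module isomorphism. In each positive even degree $2a$, $\HH_{2a}(U)$ is a finitely generated torsion $V$-module whose $\P$-primary part, for $\P$ lying over $p$, is $\F_\P^{\oplus e_\P-1}$, hence $\HH_{2a}(U)\cong\bigoplus_{\P\subset V\ \mathrm{prime}}\F_\P^{\oplus e_\P-1}$. In each positive odd degree, the same base change and product decomposition applied to $V$ itself give $\HH_{2a-1}(V)\otimes_\Z\Z_p\cong\bigoplus_{\P\mid p}\HH^{\Z_p}_{2a-1}(V^\wedge_\P)$, so $\HH_{2a-1}(U)$ and $\HH_{2a-1}(V)$ are finitely generated torsion $V$-modules agreeing after completion at every prime of $V$, and the map $\HH_{2a-1}(V)\to\HH_{2a-1}(U)$ induced by $V\hookrightarrow U$ realizes these local isomorphisms, hence is an isomorphism. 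In degree $0$, the reduced trace $\Trd\colon U\to V$ is $V$-linear, kills $[U,U]$, and is surjective (its cokernel is finitely generated and, as one checks over each $V^\wedge_\P$ using the matrix and division-algebra structure together with the surjectivity $\Trd(A)=\Tr_{T/S}(T)=S$ coming from the exactness of Equation~(\ref{resoln})), has trivial completion at every prime of $V$), so the induced surjection $U/[U,U]\to V$ splits because $V$ is projective, giving $\HH_0(U)\cong V\oplus K$. A rank count forces $K$ to be torsion, and $K$ completes at $\P$ to the torsion submodule $\F_\P^{\oplus e_\P-1}$ of $V^\wedge_\P\oplus\F_\P^{\oplus e_\P-1}$, so $K\cong\bigoplus_{\P\subset V\ \mathrm{prime}}\F_\P^{\oplus e_\P-1}$, completing the proof. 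The step requiring the most care is the degree-$0$ patching: one must verify that the torsion-free summand of $U/[U,U]$ is $V$ itself rather than merely a rank-one projective $V$-module, which is exactly where the surjectivity of the reduced trace enters.
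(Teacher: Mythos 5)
Your route is essentially the paper's: reduce to the completions $U^\wedge_\P$, write each as a matrix ring over a maximal order $A$ in $D_\P$ and invoke Morita invariance, read off the local answer from Equation (\ref{HHlocal}), reassemble the torsion $\P$-primary part by $\P$-primary part, and in degree zero produce a $V$-linear surjection $U/[U,U]\to V$ via the reduced trace and split it using the freeness of $V$. Two points in your write-up need repair, though. First, your assertion that the map $\HH_{2a-1}(V)\to \HH_{2a-1}(U)$ induced by $V\hookrightarrow U$ ``realizes these local isomorphisms, hence is an isomorphism'' is false in general. Already when $U^\wedge_\P\cong M_{i_\P}(V^\wedge_\P)$, the scalar inclusion followed by the Morita (trace) isomorphism is multiplication by $i_\P$ on $\HH_{2a-1}(V^\wedge_\P)\cong S/P'(\pi)S$, and for a ramified division algebra the inclusion $S\hookrightarrow A$ acts on the odd groups by multiplication by $n$ (this is why Corollary \ref{tamedivisionalgeba} is stated only under the hypothesis $p\nmid n$, and it is the same phenomenon the paper flags after Theorem \ref{localmainthm} for $\THH$). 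So whenever $p$ divides $i_\P$ or $e_\P$ and $\HH_{2a-1}(V^\wedge_\P)\neq 0$, the inclusion-induced map is not an isomorphism. Fortunately you do not need this claim: the corollary asserts only abstract $V$-module isomorphisms, and the argument you already use in even degrees --- a finitely generated torsion module over the Dedekind domain $V$ is the direct sum of its $\P$-primary parts, which are computed by the completions --- applies verbatim in odd degrees. Delete the sentence about the inclusion-induced map and nothing is lost.

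Second, you write $\Trd\colon U\to V$ as if the integrality $\Trd(U)\subseteq V$ were automatic; your parenthetical only addresses surjectivity. This containment is exactly where the paper does real work: it passes to a splitting field $\tilde C$ of $D_\P$ with valuation ring $\tilde V$, uses the matrix-ring case over $\tilde V$, and intersects back down to $V^\wedge_\P$. Your surjectivity argument, on the other hand, matches the paper's: on $T$ the reduced trace restricts to $\Tr_{T/S}$, which is onto $S$ by the exactness of (\ref{resoln}). So either supply the splitting-field argument or cite the standard fact that the reduced characteristic polynomial of an element of an order has coefficients in $V$; with that and the odd-degree claim removed, your proof coincides with the paper's.
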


\begin{proof}
This is assembled from Equation (\ref{HHlocal}) over all nontrivial prime ideals $\P\subset V$.  Since $U$ is finitely generated as a module over the Dedekind domain $V$, so is each level of the Hochschild complex and so is each Hochschild homology group.  Thus each Hochschild homology group splits as a direct sum of a projective $V$-module with $V$-torsion groups.  The torsion groups split as a direct sum of $\P$-torsion over all nontrivial prime ideals $\P\subset V$, and these were calculated in \cite[Theorem 3.5]{L95} and are summed up here.  

If we had a nonzero projective $V$-module in $\HH_i(U)$ for some $i$, we would get nonzero projective modules in all the completions, so by examining Equation (\ref{HHlocal}), this happens only at $i=0$.  There we see, by looking at any of the completions, that this projective module has rank one.  So in fact the only part of this corollary that does not follow immediately from the calculation in \cite[Theorem 3.5]{L95} is the determination that the projective summand of $\HH_0(U)$ is isomorphic to $V$.  

It would suffice to produce a surjective $V$-linear map $\HH_0(U)\cong U/[U,U] \to V$, because then by $V$'s freeness over itself we would know that there is a section, so $\HH_0(U)$ would consist of a direct sum of $V$ with another module.  This complement would have to consist entirely of  torsion, because otherwise the  localization at any prime of $\HH_0(U)$ would have a higher rank free part than one copy of the localization of $V$, which is all there is in Larsen's result.

For any field $C$ and any finitely generated $C$-algebra $B$ we can define a trace map $\Trace_{B/C}: B\to C$ which assigns to every $b\in B$ the trace of the matrix describing left multiplication by $b$ as a $C$-linear function $B\to B$.  Clearly, $\Trace_{B/C}$ has to vanish on the commutators $[B,B]$.  If $B$ is a central simple $C$-algebra of degree $m$ (so of dimension $m^2$) over $C$, we can also look at the reduced trace 
$\Trd_{B/C} =\frac{1}{m} \Trace_{B/C}.$
We want to use the $B$ and $C$ given in the conditions of the corollary and have 
$$\Trd_{B/C}: U/[U,U] \to V$$
be the  required surjection,  In order for this to make sense,  we  need to verify that $\Trd_{B/C}(U)\subseteq V$ and that the image is all of $V$.  Both these conditions can be verified locally, so we will show that  $\Trd_{B^\wedge_\P/C^\wedge_\P}(U^\wedge_\P)$  has its image contained in  $V^\wedge_\P$ and in fact equal to all of $V^\wedge_\P$ for every nontrivial prime ideal $\P\subset V$.

After completing, using the notation discussed earlier in this section, we know  that 
$B^\wedge_\P \cong M_{i_\P\times i_\P}(D)$ for $D$ a central division algebra over $C^\wedge_\P$ of degree $e_\P$.  Counting dimensions, we get $m= i_\P e_\P$.  Since  the trace of an $( i_\P e_\P)\times ( i_\P e_\P)$ matrix can be calculated by first taking trace of an 
$i_\P\times i_\P$ matrix of $e_\P\times e_\P$ blocks and then taking trace of the resulting $e_\P\times e_\P$ block, 
\begin{equation}\label{splitting}
\Trd_{B^\wedge_\P/C^\wedge_\P} 
=\frac{1}{m} \Trace_{B^\wedge_\P/C^\wedge_\P}
= \frac{1}{e_\P} \Trace_{D/C^\wedge_\P} 
\circ \frac{1}{i_\P} \Trace_{B^\wedge_\P/D} .
\end{equation}
As in the introduction, since $U^\wedge_\P$ is a maximal order in $B^\wedge_\P \cong M_{i_\P\times i_\P}(D)$, 
$U^\wedge_\P \cong M_{i_\P\times i_\P}(A)$ for a maximal order $A$ in $D$.  Since $M_{i_\P\times i_\P}(A)$
as a representation of itself acting by left multiplication splits as a direct sum of $i_\P$ copies of $M_{i_\P\times i_\P}(A)$ acting on the columns
$M_{i_\P\times 1}(A)$, we get that $ \frac{1}{i_\P} \Trace_{B^\wedge_\P/D}$ sends $U^\wedge_\P $ to the image of the usual trace of $M_{i_\P\times i_\P}(A)$ acting on
$M_{i_\P\times 1}(A)$, which is  all of $A$.

So it remains to show that $ \frac{1}{e_\P} \Trace_{D/C^\wedge_\P} (A) \subseteq V^\wedge_\P$ and that it is in fact all of $V^\wedge_\P$.  To show the containment, we find a finite extension $\tilde C$ of $C^\wedge_\P$ over which $D$ splits, that is: so that $\tilde C\otimes_{C^\wedge_\P} D \cong M_{e_\P\times e_\P}(\tilde C)$.  Let $\tilde V$ be $\tilde C$'s valuation ring.  By the previous argument for matrix rings,
 $$ \frac{1}{e_\P} \Trace_{(\tilde C\otimes_{C^\wedge_\P} D)/\tilde C} (\tilde V \otimes_{V^\wedge_\P}A) = \tilde V,$$
 and of course 
  $$ \frac{1}{e_\P} \Trace_{(\tilde C\otimes_{C^\wedge_\P} D)/\tilde C} (D) =
\frac{1}{e_\P} \Trace_{D/C^\wedge_\P} (D) \subseteq D.$$
Therefore, for the intersection of these two we have
 $$ \frac{1}{e_\P} \Trace_{(\tilde C\otimes_{C^\wedge_\P} D)/\tilde C} (A) =\frac{1}{e_\P} \Trace_{D/C^\wedge_\P}(A)  \subseteq \tilde V\cap D = V^\wedge_\P.$$
To see that the image of $A$ under $ \frac{1}{e_\P} \Trace_{D/C^\wedge_\P}$ is indeed all of $V^\wedge_\P$, it is enough to show that 
$ \frac{1}{e_\P} \Trace_{D/C^\wedge_\P}(T)=V^\wedge_\P$ for the $T\subseteq A$ defined above Equation (\ref{Astruc}).  By that equation, $A\cong T^{\oplus e_\P}$ as a $T$-module (since $n= e_\P$ in this context).  Therefore, on $T$, 
$ \frac{1}{e_\P} \Trace_{D/C^\wedge_\P}=\Trace_{M/C^\wedge_\P}$, where $M$ is the  field of fractions of $T$.  But the trace of $M$ over $L=C^\wedge_\P$ is the same as the 
$\Tr =\Tr_{T/S}=1+\sigma+\cdots+\sigma^{n-1}$ defined above Equation (\ref{resoln}).   By the exactness  in Equation (\ref{resoln}),  $\Tr_{T/S}(T)=\ker(1-\sigma^{-1}) = S =V^\wedge_\P$.
\end{proof}

We conclude this section with a lemma that we will use later, elaborating on the complex (\ref{small}), and a few consequences:
\begin{lem}\label{themap}
If we calculate   $\HH^R_*(A)$ using the complex (\ref{small}), then the following map of complexes induces on homology the map that the inclusion $T\hookrightarrow A$ induces
on Hochschild homology:
$$\xymatrix{
0& \ar[l] T \ar[d]^{=} \quad &
 \ar[l]_0 \ar[d] ^\Tr T \quad &
 \ar[l] _{P'(\pi)} T \ar[d]^{=} \quad &
  \ar[l]_0 \ar[d] ^\Tr T\quad  &
 \ar[l] _{P'(\pi)} T\quad  \ar[d]^{=} &
 \ar[l]_0 \cdots
 \\
0&
\ar[l] T\quad &
 \ar[l]_{\pi(1-\sigma^{-1})}  T\quad  &
  \ar[l]_{P'(\pi) \Tr} T\quad  &
  \ar[l]_{\pi(1-\sigma^{-1})}  T\quad  &
  \ar[l]_{P'(\pi) \Tr} T\quad  &
  \ar[l]_{\pi(1-\sigma^{-1})}   \cdots
}
$$
\end{lem}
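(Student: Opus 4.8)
Write $f$ for the chain map displayed in the statement. First I would note that the top row is a complex computing $\HH^R_*(T)$: it is the usual period-two small complex of the monogenic $R$-algebra $T$, with the two copies of $P'(\pi)$ standing for a generator of the different $\mathscr{D}_{T/R}$, which equals $P'(\pi)T$ because $T/S$ is unramified and $\mathscr{D}_{S/R}=P'(\pi)S$. The plan is then to check, directly on homology and under this identification together with Larsen's identification of the homology of (\ref{small}) with $\HH^R_*(A)$, that $f$ realizes $\HH^R_*(\iota)$. Both graded groups are periodic of period two in positive degrees and $f$ is periodic of period two from degree $1$ on, so it is enough to treat degrees $0$ and $1$; every even positive degree is automatic since $\HH^R_*(T)$ vanishes there. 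In degree $0$ the map $\HH^R_0(\iota)$ is the evident $T=\HH^R_0(T)\to\HH^R_0(A)=A/[A,A]$, and Larsen's isomorphism $\HH^R_0(A)\cong T/\pi\ker(\Tr_{T/S})=T/\pi(1-\sigma^{-1})(T)$ (using the exactness in (\ref{resoln})) is induced by $T\hookrightarrow A$, so $\HH^R_0(\iota)$ is the quotient $T\to T/\pi(1-\sigma^{-1})(T)$ --- exactly what the degree-zero column of $f$ (the identity of $T$ followed by this quotient) induces. So the substance of the lemma is the degree-one case.

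In degree $1$ one must show that, under $\HH^R_1(T)\cong T/P'(\pi)T$ and $\HH^R_1(A)\cong S/P'(\pi)S$ as in (\ref{HHlocal1}), the map $\HH^R_1(\iota)$ is induced by $\Tr_{T/S}\colon T\to S$ (well defined on the quotients because $P'(\pi)\in S$). I would locate the trace by factoring
$$\HH^R_*(T)=\HH^R_*(T;T)\longrightarrow\HH^R_*(T;A)\longrightarrow\HH^R_*(A),$$
the first arrow induced by the $T$-bimodule map $T\to A$, the second the change-of-algebra map along $\iota$; on normalized bar complexes the composite sends the $p$-chain $t_0\otimes\overline{t_1}\otimes\cdots\otimes\overline{t_p}$ to $\iota(t_0)\otimes\overline{\iota(t_1)}\otimes\cdots\otimes\overline{\iota(t_p)}$, so it is $\HH^R_*(\iota)$. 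By (\ref{Astruc}) and $mx=x\sigma(m)$, the $T$-bimodule $A$ splits as $\bigoplus_{i=0}^{n-1}{}_1T_{\sigma^{-i}}$ (left action untwisted, right action twisted by $\sigma^{-i}$), hence $\HH^R_*(T;A)=\bigoplus_{i=0}^{n-1}\HH^R_*(T;{}_1T_{\sigma^{-i}})$ and the first arrow is the split inclusion of the $i=0$ summand $\HH^R_*(T;T)$. Then the reduction of Larsen's $A$-complex down to (\ref{small}) --- the one collapsing the powers of $x$ --- acts on the $i=0$ summand by summing over the twists $\sigma^0,\dots,\sigma^{n-1}$, i.e.\ by applying $1+\sigma+\cdots+\sigma^{n-1}=\Tr_{T/S}$ to the coefficients; that is the trace in the statement. (As a consistency check, the resulting formula forces the center inclusion $S\hookrightarrow A$ to induce multiplication by $n=[T:S]$ on $\HH^R_1$, which matches the $p\mid n$ versus $p\nmid n$ dichotomy noted for $\THH$ in the introduction.)

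The hard part is turning that last step into a proof: the factorization only reduces matters to computing the change-of-algebra map $\HH^R_*(T;A)\to\HH^R_*(A)$ on the $i=0$ summand. To identify its effect on $\HH^R_1$ one must carry Larsen's explicit chain-level quasi-isomorphism (from the proof of \cite[Theorem 3.5]{L95}) through the reduction, tracking simultaneously the left/right twists by powers of $\sigma$ on $A$ as a $T$-bimodule and the divided-difference term contributed by the Eisenstein polynomial $P$, and then confirm that after composing with the identification $\HH^R_1(A)\cong S/P'(\pi)S$ coming from the exactness of (\ref{resoln}) the map is precisely multiplication by $\Tr_{T/S}$ rather than by another element of $\Z[\sigma]$ with the same effect in degree $0$. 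A more hands-on alternative, avoiding the bimodule bookkeeping, is to evaluate Larsen's retraction $\bar{C}^R_*(A)\to(\ref{small})$ directly on the one-cycles $1\otimes\overline{t}$, $t\in T$ (these represent the classes of $\HH^R_1(T)$ pushed forward by $\iota$), and check that their images in the degree-one term of (\ref{small}) reduce to $\Tr_{T/S}(t)$ modulo $P'(\pi)S$; here too the trace materializes as the sum over the $T$-basis $1,x,\dots,x^{n-1}$ of $A$ built into the retraction.
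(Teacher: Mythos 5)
Your overall plan is in the same spirit as the paper's proof (compare the two small complexes through Larsen's explicit quasi-isomorphisms and identify the comparison in low degrees), and your degree-zero check and the $T$-bimodule decomposition $A\cong\bigoplus_{i=0}^{n-1}{}_1T_{\sigma^{-i}}$ are correct. But the heart of the lemma --- that the odd-degree component of the comparison is $\Tr_{T/S}$ --- is never actually established. You reduce it to computing the change-of-algebra map $\HH^R_*(T;A)\to\HH^R_*(A)$ on the $i=0$ summand under Larsen's retraction, assert that the retraction "sums over the twists," and then explicitly defer the verification ("the hard part is turning that last step into a proof"), offering two possible routes without executing either. That deferred step is precisely what the paper's proof consists of: one takes the quasi-isomorphism of \cite{LL92} (Equation (1.8.6) there) from the small complex of $S$ into its standard Hochschild complex, tensors the $0$'th coordinate over $S$ with $T$, includes functorially into the Hochschild complex of $A$, passes to the reduced complex, applies Larsen's map $\pi_*$, and reads off from the explicit formula $\pi_1(u_1\pi^{\ell_1}\otimes\pi^{\ell_2})=\Tr(u_1)\pi^{\ell_1+\ell_2-1}$ (with $u_1\in U$, $\ell_2>0$, and such elements spanning $T=U[\pi]/P(\pi)$) that the odd component is $\Tr$. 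Without this chain-level evaluation, the appearance of the trace remains a plausibility argument backed only by a consistency check, so the proposal has a genuine gap at the lemma's essential point.

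Two secondary issues. First, your reduction to degrees $0$ and $1$ "by periodicity" is not justified: the abstract two-periodicity of the homology groups does not determine $\HH_{2a-1}(\iota)$ for $a>1$ from $\HH_1(\iota)$ unless you exhibit a periodicity compatible with the inclusion; the paper avoids this by checking that the explicit composite chain map is literally $2$-periodic. Second, the statement "the top row computes $\HH^R_*(T)$" must come with a specified quasi-isomorphism, since "induced by $\Tr$" is only meaningful relative to that identification; the paper pins it down by base-changing the small complex of $S$ along the \'etale extension $S\to T$ (using $\HH^R_*(T)\cong\HH^R_*(S)\otimes_ST$), whereas your monogenic-$R$-algebra description (with $P'(\pi)$ generating $\mathscr{D}_{T/R}$) identifies the homology groups only up to a unit ambiguity and supplies no comparison map to feed into the rest of the argument.
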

\begin{proof}
To see that the homology of the complex in the top row is $\HH^R_*(T)$, consider the tensored-down version of the resolution (1.6.1) in \cite{LL92} for $S=R[\pi]/P(\pi)$.  Since  $T$ is free and unramified  over $S$, $\HH^R _*(T) \cong \HH^R_*(S) \otimes_S T$.  Therefore, if we have a complex of free $S$-modules calculating $ \HH^R_*(S)$, it can be tensored over $S$ with $T$ to give a complex calculating $\HH^R_*(T)$.  Thist is exactly what the top row is.

The proof of the lemma is a direct calculation using the weak equivalences of \cite{LL92} and  \cite{L95}: take the weak equivalence from the small complex calculating $ \HH^R_*(S)$ into the standard Hochschild complex of $S$ over $R$ that is given in \cite[Equation (1.8.6)]{LL92}.
Tensor the $0$'th $S$ coordinate over $S$ with $T$ to obtain a map from the complex in the top row of our diagram  into the standard Hochschild  complex of $T$ over $R$.  Since the standard Hochschild complex is a functor, this includes in the obvious way into the standard Hochschild complex of $A$ over $R$.  The standard Hochschild complex maps to the reduced Hochschild complex, and the map $\pi_*$ defined below \cite[Equation (3.8.1)]{L95} can be applied.  The resulting map of complexes can readily be seen to have period 2.  The map $\pi_0$ sends elements of $T$ to themselves.  To understand $\pi_1$, let  $U$ be the valuation ring in the maximal unramified extension of $R$'s field of fractions in $T$'s field of fractions (see the diagram below \cite[Equation (3.2.1)]{L95}).   The equation 
$\pi_1(u_1 \pi^{\ell_1}\otimes  \pi^{\ell_2}) = \Tr(u_1)  \pi^{\ell_1+\ell_2 -1}$ for $u_1\in U$, $\ell_2>0$ means that an element $u_1 \pi^{\ell_1}\otimes  \pi^{\ell_2}$ which corresponds to $u_1 \pi^{\ell_1+\ell_2 -1}$ in dimension $1$ in the top row of our complex maps to $ \Tr(u_1)  \pi^{\ell_1+\ell_2 -1}$ in the bottom row.  Since $T=U[\pi]/P(\pi)$, these elements span $T$.
\end{proof}

\begin{cor}\label{tamedivisionalgeba}
If the degree $n$ of $D$ over $L$ is not divisible by $p$ then the inclusion $S\hookrightarrow A$ induces the map of
$\HH_*^R(S) \cong
\begin{cases}
S & *=0\\ 
 S/ P'(\pi) S&*=2a-1>0\\
0& *=2a > 0.\end{cases}$
into the corresponding parts of $ \HH_*^R(A) \cong
\begin{cases}
S\oplus \F_S^{\oplus n-1}& *=0\\ 
 S/ P'(\pi) S &*=2a-1>0\\
 \F_S^{\oplus n-1} & *=2a > 0. \end{cases}
$
by the inclusion in dimension zero and multiplication by the unit $n\in S$ in odd dimensions.
\end{cor}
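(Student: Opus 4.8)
The plan is to factor the inclusion $S\hookrightarrow A$ as $S\hookrightarrow T\hookrightarrow A$ and to trace the induced maps on Hochschild homology through the complexes of Lemma \ref{themap}. First I would recall, from the proof of that lemma, that the top row of its diagram --- which computes $\HH^R_*(T)$ --- is obtained by applying $T\otimes_S(-)$ to the complex of free $S$-modules with $S$ in every degree and differentials alternating between $0$ and multiplication by $P'(\pi)$. This $S$-complex computes $\HH^R_*(S)$ (its homology is $S$ in degree $0$, $S/P'(\pi)S$ in positive odd degrees, and $0$ in positive even degrees, as in the statement), and since $T$ is free and unramified over $S$, the degreewise inclusion of the $S$-complex into the $T$-complex --- the identity tensored with $S\hookrightarrow T$ --- realizes on homology the map $\HH^R_*(S)\to\HH^R_*(T)$ induced by $S\hookrightarrow T$; this is just the functoriality of the isomorphism $\HH^R_*(T)\cong\HH^R_*(S)\otimes_S T$ used to prove Lemma \ref{themap}.

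I would then compose this with the chain map of Lemma \ref{themap}, which realizes $T\hookrightarrow A$ on Hochschild homology, to get a chain map from the $S$-complex above into the small complex (\ref{small}). In even degrees this composite is the inclusion $S\hookrightarrow T$; in odd degrees it is $S\hookrightarrow T$ followed by $\Tr=\Tr_{T/S}\colon T\to T$. Since $\Tr_{T/S}=1+\sigma+\cdots+\sigma^{n-1}$ restricts on $S$ to multiplication by $n=[M:L]$, and the hypothesis $p\nmid n$ makes $n$ a unit of the discrete valuation ring $S$, the composite in odd degrees is simply multiplication by the unit $n$, followed by $S\hookrightarrow T$.

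Passing to homology then gives the corollary. In a positive odd degree $2a-1$, the map $\HH^R_{2a-1}(S)=S/P'(\pi)S\to\HH^R_{2a-1}(A)=S/P'(\pi)S$ is multiplication by $n$, hence an isomorphism; in a positive even degree the source $\HH^R_{2a}(S)$ vanishes, so nothing is needed. In degree $0$, $\HH^R_0(S)=S\to\HH^R_0(A)=T/\pi\ker(\Tr_{T/S})$ is the composite of $S\hookrightarrow T$ with the quotient $T\to T/\pi\ker(\Tr_{T/S})$, and composing it further with the surjection $\overline{\Tr_{T/S}}\colon T/\pi\ker(\Tr_{T/S})\twoheadrightarrow S$ --- well defined since $\Tr_{T/S}$ kills $\ker(\Tr_{T/S})$, with kernel $\ker(\Tr_{T/S})/\pi\ker(\Tr_{T/S})\cong\F_S^{\oplus n-1}$ --- again gives multiplication by the unit $n$. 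Hence $S\to T/\pi\ker(\Tr_{T/S})$ is a split injection whose image is a complement of the $\F_S^{\oplus n-1}$ summand, so it is the inclusion of the $S$ summand in the decomposition (\ref{HHlocal}).

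I expect the only delicate point to be the degree $0$ statement: one must check that the image of $\HH^R_0(S)$ is a direct complement of $\ker(\Tr_{T/S})/\pi\ker(\Tr_{T/S})\cong\F_S^{\oplus n-1}$, not merely a submodule injecting into $\HH^R_0(A)$, and this is exactly where $p\nmid n$ is used, via the reduced-trace identity $\Tr_{T/S}|_S=\cdot\,n$. The remaining work --- identifying the $S$- and $T$-complexes, their homology, and the chain maps --- is bookkeeping already essentially carried out in Lemma \ref{themap} and its proof.
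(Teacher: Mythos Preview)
Your argument is correct and essentially equivalent to the paper's, though organized a bit differently. You compute the composite chain map $S\text{-complex}\to T\text{-complex}\to(\ref{small})$ directly, observing that it is the inclusion $S\hookrightarrow T$ in even degrees and $\Tr_{T/S}\circ\iota=n\cdot\iota$ in odd degrees, and then pass to homology. The paper instead first uses the splitting $T\cong S\oplus\ker(\Tr_{T/S})$ (via $\tfrac{1}{n}\cdot\iota$, available precisely when $p\nmid n$) to decompose \emph{both} rows of Lemma~\ref{themap} into direct sums of subcomplexes indexed by the two summands, and then reads off the induced map on each piece.

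What the paper's more elaborate decomposition buys is a simultaneous description of the entire map $\HH^R_*(T)\to\HH^R_*(A)$, not just its restriction to $\HH^R_*(S)$; this is then reused verbatim (after tensoring with $\F_p$) to prove Corollary~\ref{tamedivisionalgebamodp}. Your direct approach is slightly slicker for this single corollary but would need to be redone for the mod-$p$ companion. The one delicate point you flagged---that in degree~$0$ the image of $S$ is a genuine complement to $\ker(\Tr_{T/S})/\pi\ker(\Tr_{T/S})$---is handled identically in both approaches, via the retraction $\overline{\Tr_{T/S}}$ composed with the unit~$n$.
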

 \begin{proof}
 If $p\nmid n$, we can split the second short exact sequence
 $$
0\to\ker(\Tr_{T/S})\to T\to S\to 0$$ in
(\ref {SESs})
 by $1/n$ times the inclusion $ S\hookrightarrow T$ and get a decomposition of $S$-modules $T\cong \ker(\Tr_{T/S})\oplus S$ that works for both short exact sequences in (\ref {SESs}) in terms of breaking $T$ up into two $S$-submodules, even if the maps are not exactly the standard inclusions---they differ from that only by multiplication by a unit of $S$.  Recall that the top complex in Lemma \ref{themap} was obtained by taking the complex of $S$-modules 
$\xymatrix{
0& \ar[l] S  &
 \ar[l]_0 S &
 \ar[l] _{P'(\pi)} S &
  \ar[l]_0S &
 \ar[l] _{P'(\pi)} S&
 \ar[l]_0 \cdots}$
 and tensoring it over $S$ with $T\cong \ker(\Tr_{T/S})\oplus S$.  It is therefore isomorphic to the direct sum of the complex
 \begin{equation}\label{firstcomplex}
 \xymatrix{
0& \ar[l] S  &
 \ar[l]_0 S &
 \ar[l] _{P'(\pi)} S &
  \ar[l]_0S &
 \ar[l] _{P'(\pi)} S&
 \ar[l]_0 \cdots,}
 \end{equation}
 whose homology is the image of $\HH_*^R(S)$ in $\HH_*^R(T)$, with the complex
  \begin{equation}\label{secondcomplex}
 \xymatrix{
0& \ar[l] \ker(\Tr_{T/S})  &
 \ar[l]_0 \ker(\Tr_{T/S}) &
 \ar[l] _{P'(\pi)} \ker(\Tr_{T/S}) &
   \ar[l]_0 \cdots}.
 \end{equation}
 Similarly, the bottom complex in Lemma  \ref{themap} can be split into the direct sum of the complex
 \begin{equation}\label{thirdcomplex}
 \xymatrix{
0& \ar[l] S  &
 \ar[l]_0 S &
 \ar[l] _{nP'(\pi)} S &
  \ar[l]_0S &
 \ar[l] _{nP'(\pi)} S&
 \ar[l]_{\ \ 0} \cdots,}
 \end{equation}
 whose homology accounts for the copy of $\HH_*^R(S)$ inside $\HH_*^R(A)$, and the complex
 \begin{equation}\label{fourthcomplex}
 \xymatrix{
0& \ar[l] \ker(\Tr_{T/S}) \  \ &
 \ar[l]_{\pi(1-\sigma^{-1})} \ \ker(\Tr_{T/S}) &
 \ar[l] _{0} \ker(\Tr_{T/S}) \ \ \ \ &
   \ar[l]_{\pi(1-\sigma^{-1})} \cdots}.
 \end{equation} 
Note that $(1-\sigma^{-1}):\  \ker(\Tr_{T/S})\to \ker(\Tr_{T/S})$ is an isomorphism because we know that $(1-\sigma^{-1})(T)= \ker(\Tr_{T/S})$ and that $\ker (1-\sigma^{-1})$ is exactly the the image of the standard inclusion of $S$ in $T$; thus the homology of the complex (\ref{fourthcomplex}) accounts for the $ \F_S^{\oplus n-1} $'s in even dimensions in $\HH_*^R(A)$.

Lemma \ref{themap} explains to us exactly what the inclusion $T\hookrightarrow A$ does to these parts: the complex (\ref{firstcomplex}) maps by the identity in even dimensions and multiplication by the unit $n$ in odd dimensions onto the complex (\ref{thirdcomplex}) while the complex (\ref{secondcomplex}) maps into the complex
 (\ref{fourthcomplex}) by the identity in even dimensions and the zero map in odd dimensions.  Note however that most of the homology of the complex  (\ref{secondcomplex}) lies in odd dimensions, and therefore goes to zero: the only even-dimensional homology is $ \ker(\Tr_{T/S})$ in dimension zero which goes by the obvious quotient map to  $\ker(\Tr_{T/S})/\pi \ker(\Tr_{T/S})\cong  \F_S^{\oplus n-1}$.
 \end{proof}
 
Repeating the above argument after tensoring both complexes in Lemma  \ref{themap} with $\F_p$, we get
\begin{cor}\label{tamedivisionalgebamodp}
If the degree $n$ of $D$ over $L$ is not divisible by $p$ then the inclusion $S\hookrightarrow A$ induces an embedding of
$\HH_*(S/(p)) $ as a direct summand in $\HH_*(A/(p)) $, with the complementary direct summand  consisting of a copy of 
$\F_S^{\oplus n-1}$ in every dimension  $*\geq 0$.
\end{cor}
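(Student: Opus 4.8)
The plan is to rerun the proof of Corollary~\ref{tamedivisionalgeba} with every module reduced modulo $p$. First I would tensor the whole commutative square of Lemma~\ref{themap} over $\Z_p$ with $\F_p$. Every term of both complexes is a free $S$-module, hence a finitely generated free $\Z_p$-module, and by the proof of Lemma~\ref{themap} the top complex, resp.\ the bottom complex, is tied by chain maps --- through complexes of finitely generated free $\Z_p$-modules --- to the reduced Hochschild complex of $T$, resp.\ of $A$, over $R$. Since $A$, $T$, $S$ are free over $R$, which is free over $\Z_p$, reducing these Hochschild complexes modulo $p$ produces the Hochschild complexes of $A/(p)$, $T/(p)$, $S/(p)$ over $R/(p)=\F_S$; and because a bounded-below acyclic complex of finitely generated free $\Z_p$-modules is contractible, all of these chain maps remain quasi-isomorphisms after $-\otimes_{\Z_p}\F_p$. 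Consequently, after tensoring with $\F_p$, the top row of Lemma~\ref{themap} computes $\HH_*(T/(p))$, the bottom row computes $\HH_*(A/(p))$, the vertical map computes the map induced on Hochschild homology by $T/(p)\hookrightarrow A/(p)$, and precomposing with $S/(p)\hookrightarrow T/(p)$ realizes the map induced by $S\hookrightarrow A$.

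Next I would carry the rest of the proof of Corollary~\ref{tamedivisionalgeba} through almost verbatim. Since $p\nmid n$, the $S$-module splitting $T\cong S\oplus\ker(\Tr_{T/S})$ used there is a splitting of finitely generated free $S$-modules, so it reduces modulo $p$ to a splitting of free $\bar S$-modules, where $\bar S:=S/(p)$. Using it to split the two $\F_p$-tensored complexes exactly as in Corollary~\ref{tamedivisionalgeba}, the bottom complex becomes the direct sum of (\ref{thirdcomplex}) and (\ref{fourthcomplex}), each tensored with $\F_p$. As $\bar n$ is a unit of $\F_S\subseteq\bar S$, the complex (\ref{thirdcomplex})$\otimes\F_p$ is isomorphic to (\ref{firstcomplex})$\otimes\F_p$, whose homology is $\HH_*(S/(p))$, and the reduction modulo $p$ of the map of Lemma~\ref{themap} carries this copy isomorphically onto it (the identity in even degrees, multiplication by the unit $\bar n$ in odd degrees); this gives the split embedding of $\HH_*(S/(p))$.

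It then remains to identify the complementary summand as the homology of (\ref{fourthcomplex})$\otimes\F_p$. The map $1-\sigma^{-1}$ is still an automorphism of $\ker(\Tr_{T/S})/(p)\cong\bar S^{\oplus n-1}$ (base change of an automorphism), so in that complex each nonzero differential has the same kernel and cokernel as multiplication by $\pi$ on $\bar S^{\oplus n-1}$. This is exactly where the situation differs from Corollary~\ref{tamedivisionalgeba}: in $\bar S=\F_S[\pi]/(\pi^{d})$ --- the Eisenstein reduction of $S$, with $d=[L:K]$ --- multiplication by $\pi$ is no longer injective, its kernel and its cokernel each being isomorphic to $\F_S$, so a short direct computation shows that (\ref{fourthcomplex})$\otimes\F_p$ has homology $\F_S^{\oplus n-1}$ in \emph{every} degree $*\ge 0$, rather than only in even degrees. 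Assembling the pieces yields $\HH_*(A/(p))\cong\HH_*(S/(p))\oplus\F_S^{\oplus n-1}$ in every dimension $*\ge 0$, with the first summand hit isomorphically by the map induced by $S\hookrightarrow A$. The step that needs genuine care is the first one --- checking that the chain-level quasi-isomorphisms of Lemma~\ref{themap} and \cite{L95} survive $-\otimes_{\Z_p}\F_p$ and still model the map on the Hochschild homology of the mod-$p$ reductions; everything afterwards is the bookkeeping of Corollary~\ref{tamedivisionalgeba} together with the elementary homology computation over $\F_S[\pi]/(\pi^{d})$.
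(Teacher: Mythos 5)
Your proposal is correct and follows essentially the same route as the paper, whose proof of this corollary is precisely to rerun the argument of Corollary \ref{tamedivisionalgeba} after tensoring both complexes of Lemma \ref{themap} with $\F_p$; your extra care about the quasi-isomorphisms surviving reduction mod $p$ matches the justification the paper gives later (the acyclic complementary summand of free $\Z_p$-modules stays acyclic), and your computation that $(1-\sigma^{-1})$ remains an automorphism while multiplication by $\bar\pi$ on $\F_S[\pi]/(\pi^d)$ acquires kernel and cokernel $\F_S$ is exactly the bookkeeping that produces the $\F_S^{\oplus n-1}$ in every dimension.
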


In the general case, when  the degree $n$ of $D$ over $L$ might be divisible by $p$, we can say less, but we can still deduce the following two corollaries:

\begin{cor}\label{wilddivisionalgeba}
The inclusion $T\hookrightarrow A$ induces a map of
from $\HH_*^R(T)$
to $ \HH_*^R(A)$
which is the quotient map $T\to T/\pi \Tr_{T/S}(T)$ in dimension zero, a surjection in odd dimensions, and the zero map in positive even dimensions.
\end{cor}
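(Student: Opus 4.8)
The plan is to read off everything from the map of complexes in Lemma \ref{themap}, exactly as in the proof of Corollary \ref{tamedivisionalgeba}, but without the simplifying assumption $p\nmid n$. Recall that the top row computes $\HH^R_*(T)$ and is the complex of free $S$-modules
$$\xymatrix@1{0& \ar[l] S & \ar[l]_0 S & \ar[l]_{P'(\pi)} S & \ar[l]_0 S & \ar[l]_{P'(\pi)}\cdots}$$
tensored over $S$ with $T$, so its homology is $T$ in dimension $0$, $S/P'(\pi)S\otimes_S T\cong T/P'(\pi)T$ in each odd dimension, and $(P'(\pi)\text{-torsion in }T)=\ker(P'(\pi)\colon T\to T)$ in each positive even dimension. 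The vertical maps in Lemma \ref{themap} are the identity on $T$ in even degrees and $\Tr=\Tr_{T/S}$ in odd degrees (up to the period-two pattern), and the bottom row computes $\HH^R_*(A)$.

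First I would handle dimension zero. On $H_0$ the top row contributes $T$ and the bottom row contributes $T/\pi(1-\sigma^{-1})(T)$; by the exactness of (\ref{resoln}) we have $(1-\sigma^{-1})(T)=\ker(\Tr_{T/S})$, and $\Tr_{T/S}(T)=S$, so $\pi(1-\sigma^{-1})(T)$ and $\pi\Tr_{T/S}(T)=\pi S$ are in general different submodules of $T$; I need to check that the quotient map induced by the identity $T\to T$ lands in $T/\pi\Tr_{T/S}(T)$ as the statement asserts. The point is that $\HH^R_0(A)=A/[A,A]$ as computed by Larsen is $T/\pi\ker(\Tr_{T/S})$ (Equation (\ref{HHlocal1})), but the corollary statement writes it as $T/\pi\Tr_{T/S}(T)$; these agree precisely when... actually they need not agree, so the cleanest route is: the composite $T=\HH^R_0(T)\to\HH^R_0(A)$ is induced by the ring inclusion $T\hookrightarrow A$ followed by projection to $A/[A,A]$, and since $[T,T]=0$ while inside $A$ the elements $\pi=x^n$ and $x\sigma(m)-mx=0$ force $\pi(m-\sigma(m))\in[A,A]$ for $m\in T$, one reads off that the kernel contains $\pi\ker(\Tr_{T/S})$; I would simply cite that the relevant quotient is the one appearing in (\ref{HHlocal1}) and observe it is a further quotient of $T/\pi\Tr_{T/S}(T)$ when $n\neq 1$, reconciling the two descriptions, or—more honestly—restate using Larsen's $T/\pi\ker(\Tr_{T/S})$ and note that $\ker(\Tr_{T/S})\supseteq\Tr_{T/S}(T)$ fails in general, so the safe statement is that $\HH^R_0(T)\to\HH^R_0(A)$ is the natural surjection of $T$ onto $A/[A,A]$.

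Next, the odd dimensions: the map on $\HH^R_{2a-1}$ is induced by $\Tr_{T/S}\colon T\to T$ composed with the quotient to $\HH^R_{2a-1}(A)=S/P'(\pi)S$ (from (\ref{HHlocal})). Since $\Tr_{T/S}(T)=S$ by (\ref{resoln}), the image is all of $S/P'(\pi)S$, i.e. the map is surjective in odd degrees; this is immediate and needs no hypothesis on $n$. For the positive even dimensions, $\HH^R_{2a}(T)=\ker(P'(\pi)\colon T\to T)$ maps via the identity $T\to T$ into the bottom complex, whose $H_{2a}$ is $\ker(\Tr_{T/S})/\pi\ker(\Tr_{T/S})\cong\F_S^{\oplus n-1}$. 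An element of $\ker(P'(\pi))\subset T$ is a cycle in the bottom complex iff it lies in $\ker(\pi(1-\sigma^{-1}))=\ker(1-\sigma^{-1})=S$ (the element $\pi$ is a non-zero-divisor), so the image of $\HH^R_{2a}(T)$ consists of classes represented by elements of $S\cap\ker(P'(\pi))$; but such an element, viewed in $\ker(\Tr_{T/S})/\pi\ker(\Tr_{T/S})$, is zero because $S\cap\ker(\Tr_{T/S})=0$ (as $\Tr_{T/S}|_S$ is multiplication by $n\neq 0$ in the field of fractions, hence injective on $S$). Hence the map is zero in positive even degrees.

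The main obstacle is dimension zero: reconciling the two presentations of $A/[A,A]$ (Larsen's $T/\pi\ker(\Tr_{T/S})$ versus the $T/\pi\Tr_{T/S}(T)$ in the corollary) and verifying carefully that the map induced by $T\hookrightarrow A$ really is the stated quotient map $T\to T/\pi\Tr_{T/S}(T)$ rather than a twist of it. I expect this comes down to tracking Larsen's explicit quasi-isomorphism (his map $\pi_*$ below \cite[Equation (3.8.1)]{L95}) in degree $0$, where $\pi_0$ is the identity on $T$, so the induced map on $H_0$ is literally the canonical surjection; the remaining issue is purely bookkeeping about which submodule of $T$ one is quotienting by, and I would settle it by the same $\F_T/\F_S$-reduction and Nakayama argument used to prove exactness of (\ref{resoln}).
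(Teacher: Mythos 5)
Your overall strategy is the same as the paper's: read the induced map off the explicit chain map of Lemma \ref{themap}. Your degree-zero and odd-degree analyses are essentially the paper's argument (in odd degrees the vertical map $\Tr_{T/S}$ carries $T$ onto the cycles $\ker(\pi(1-\sigma^{-1}))=\ker(1-\sigma^{-1})=\im(\Tr_{T/S})$ of the bottom complex, whence surjectivity), and your observation about dimension zero is a legitimate catch: the induced map is the canonical surjection of $T$ onto $H_0$ of the small complex, which by exactness of (\ref{resoln}) is $T/\pi(1-\sigma^{-1})(T)=T/\pi\ker(\Tr_{T/S})$ as in Equation (\ref{HHlocal1}), so the ``$T/\pi\Tr_{T/S}(T)$'' in the statement should indeed be read as $T/\pi\ker(\Tr_{T/S})$. (Your parenthetical attempt to reconcile the two by saying one is a further quotient of the other is false, though: in characteristic zero $\Tr_{T/S}$ is nonzero on $S$, so neither $\pi S\subseteq\pi\ker(\Tr_{T/S})$ nor the reverse holds in general; your ``safe statement'' at the end is the right resolution.)

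Where you miss the paper's point is in positive even dimensions. The paper's argument is one line: $\HH^R_{2a}(T)\cong 0$ for $a>0$, because the top complex has even homology $\ker(P'(\pi)\colon T\to T)$ and $T$ is a discrete valuation ring (a domain) while $P'(\pi)\neq 0$ by separability --- equivalently, $\HH^R_*(T)\cong\HH^R_*(S)\otimes_S T$ and $\HH^R_{2a}(S)=0$. So the map is zero for the trivial reason that its source vanishes. You instead treat $\ker(P'(\pi))$ as possibly nonzero and argue about where its elements go; that substitute argument contains two slips: the outgoing differential of the bottom complex in even degrees is $P'(\pi)\Tr$, not $\pi(1-\sigma^{-1})$ (you have the parity reversed), and there is no ``is it a cycle?'' condition to check, since a chain map automatically sends cycles to cycles. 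These errors happen to be harmless only because the group you are pushing forward is zero, but as written the even-dimensional step does not stand on its own; replacing it with the vanishing of $\HH^R_{2a}(T)$ fixes it immediately.
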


\begin{proof}
In odd dimensions, $\Tr_{T/S}$ in the vertical arrows in the diagram in Lemma \ref{themap} 
sends $T$ onto $\ker(\pi (1-\sigma^{-1})) = \ker(1-\sigma^{-1})= \im(\Tr_{T/S})$.  In even dimensions, we have that for $a>0$, $\HH_{2a}^R(T)\cong 0$.
\end{proof}

\begin{cor}\label{wilddivisionalgebamodp}
The inclusion $T/(p) \hookrightarrow A/(p)$ induces a map 
$\HH_*(T/(p))\to \HH_*(A/(p))\cong \HH_*(S/(p)) \oplus \F_S^{\oplus n-1}$
which in dimension zero  is the quotient map  $T/(p)\to T/(p, \pi \Tr_{T/S}(T))$
and for all $*> 0$ subjects onto the first summand $\HH_*(S/(p))$.   \end{cor}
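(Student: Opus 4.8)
The plan is to repeat the proof of Corollary~\ref{wilddivisionalgeba} with everything reduced modulo~$p$, just as Corollary~\ref{tamedivisionalgebamodp} was deduced from Corollary~\ref{tamedivisionalgeba}. First I would tensor the commutative square of complexes in Lemma~\ref{themap} with~$\F_p$ over~$R$. Both rows consist of free $R$-modules (every term is~$T$) and, by construction, are quasi-isomorphic to the reduced Hochschild complexes of~$T$, resp.\ of~$A$, over~$R$; since $T$ and~$A$ are free over~$R$, reduction modulo~$p$ makes them quasi-isomorphic to the reduced Hochschild complexes of~$T/(p)$, resp.\ $A/(p)$, over~$R/(p)$. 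Because $R$ is unramified over~$\Z_p$, the extension $R/(p)$ of~$\F_p$ is separable, so $\HH^{R/(p)}_*\cong\HH^{\F_p}_*$ by the collapsing-spectral-sequence argument that identifies $\HH^R_*$ with~$\HH^{\Z_p}_*$ in Section~\ref{second}. Thus the mod-$p$ version of Lemma~\ref{themap} computes the map $\HH_*(T/(p))\to\HH_*(A/(p))$ induced by the inclusion, the mod-$p$ version of the small complex~(\ref{small}) computes $\HH_*(A/(p))$, and the decomposition $\HH_*(A/(p))\cong\HH_*(S/(p))\oplus\F_S^{\oplus n-1}$ comes out of that small complex together with the short exact sequences~(\ref{SESs}), just as~(\ref{HHlocal}) came from~(\ref{HHlocal1}).

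Next I would run through the degrees. In dimension zero the vertical arrow is the identity of~$T/(p)$, so on $\HH_0$ it is the quotient of~$T/(p)$ by the image of the first differential $\pi(1-\sigma^{-1})$ of the reduced small complex, which by the exactness of~(\ref{resoln}) is the quotient map in the statement. In degree $2a-1>0$ the top row gives $\HH_{2a-1}(T/(p))\cong(T/(p))/P'(\pi)(T/(p))$ and the vertical arrow is $\Tr=\Tr_{T/S}$; since $\Tr_{T/S}$ maps~$T/(p)$ onto $\Tr_{T/S}(T)/(p)$ and annihilates $\ker(\Tr_{T/S})/(p)$, a direct computation identifies its image on $\HH_{2a-1}$ with the central summand $\HH_{2a-1}(S/(p))\cong S/(p,P'(\pi))$, so composing with the projection to the first factor is onto. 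In degree $2a>0$ the top row gives $\HH_{2a}(T/(p))\cong\ker\bigl(P'(\pi)\colon T/(p)\to T/(p)\bigr)$ --- nonzero, unlike $\HH^R_{2a}(T)$ --- and the vertical arrow is again the identity; an element $y$ of the central $S$-summand of~$T/(p)$ with $P'(\pi)y=0$ gives a class of $\HH_{2a}(T/(p))$ and maps to the class it represents in the central summand of $\HH_{2a}(A/(p))$, because on that summand $P'(\pi)\Tr_{T/S}$ and $P'(\pi)$ kill the same elements, so $\HH_{2a}(T/(p))$ again surjects onto the central summand.

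The step I expect to require real care is the identification of the central summand and of the image of the map in positive degrees when $p\mid n$. In that case $\Tr_{T/S}$ restricted to $S_0=\ker(1-\sigma^{-1})$ is multiplication by the non-unit~$n$, and $S_0/(p)$ and $\ker(\Tr_{T/S})/(p)$ no longer form a direct sum decomposition of~$T/(p)$: their reductions modulo~$\pi$ satisfy $\F_S=\overline{S_0}\subseteq\overline{\ker(\Tr_{T/S})}=\ker(\Tr_{\F_T/\F_S})$, so the clean chain-level splitting used in the tame case of Corollary~\ref{tamedivisionalgebamodp} is unavailable. The remedy is to work with a genuine $S$-module splitting $T\cong S'\oplus\ker(\Tr_{T/S})$ of the second short exact sequence in~(\ref{SESs}), in which $\Tr_{T/S}$ still restricts to an isomorphism $S'\xrightarrow{\ \sim\ }\Tr_{T/S}(T)$, and to carry out the two computations of the previous paragraph relative to this splitting; alternatively, one can bypass the chains and deduce the positive-degree surjectivities from those of Corollary~\ref{wilddivisionalgeba} by the naturality of the universal coefficient sequences, using that the relevant kernel is a direct summand. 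This is the only essential way in which the wildly ramified case departs from the argument for Corollary~\ref{tamedivisionalgebamodp}.
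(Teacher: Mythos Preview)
Your proposal is correct and follows essentially the same route as the paper: tensor the diagram of Lemma~\ref{themap} with $\F_p$, then analyze the induced map degree by degree using the $S$-module splitting $T\cong S'\oplus\ker(\Tr_{T/S})$ coming from the second short exact sequence in~(\ref{SESs}). The paper carries out exactly this computation, first recording that $P'(\pi)\equiv d\pi^{d-1}\pmod p$ and then decomposing, in each positive degree, the relevant kernel in the bottom row as the direct sum of its $S'$-part (which gives the $\HH_*(S/(p))$ summand and receives the image of the vertical map) and its $\ker(\Tr_{T/S})$-part (which gives the $\F_S^{\oplus n-1}$). Your flagging of the $p\mid n$ subtlety---that one must use the abstract splitting $S'$ rather than the naive copy $S_0=\ker(1-\sigma^{-1})$---is precisely the point, and the paper uses the same splitting, just without pausing to comment on it.

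Your alternative route via the universal coefficient sequence and Corollary~\ref{wilddivisionalgeba} also works, and is a legitimate shortcut the paper does not take: since the surjection $\HH_{2a-1}^R(T)\twoheadrightarrow\HH_{2a-1}^R(A)$ splits over $S$ (its kernel is $\ker(\Tr_{T/S})/P'(\pi)$, a direct summand), it remains surjective after applying $-\otimes\F_p$ and $\Tor_1(-,\F_p)$, which handles both parities at once. This is slightly slicker than the paper's direct chain-level decomposition, at the cost of being less explicit about where the summands sit inside the small complex~(\ref{psmall})---information that is used later in the analysis of the Brun spectral sequence.
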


\begin{proof}
Since $P$ is an Eisenstein polynomial, $P(\pi)\equiv \pi^d\equiv 0$ and $P'(\pi)\equiv d\pi^{d-1}$ modulo $p$.   After tensoring both complexes in Lemma  \ref{themap} with $\F_p$, we see that in odd dimensions in the bottom row of the diagram in Lemma  \ref{themap}, $\ker(\pi (1-\sigma^{-1})) $ becomes the direct sum of  
$\im(\Tr_{T/S})$ with $\pi^{d-1}$ times its complement in $T/(p)$.  The former is the image of the vertical map and its quotient by the incoming boundary map is the $\HH_{2a-1}(S/(p))$ part of  $\HH_{2a-1}(A/(p))$.  The latter gives the copy of $ \F_S^{\oplus n-1}$.  

In even dimensions $2a>0$
 in the bottom row of the diagram in Lemma  \ref{themap}, $\ker(d\pi^{d-1}\Tr_{T/S})$ becomes the direct sum of
$\ker(\Tr_{T/S})$ and
of $\Ann_{S/(p)}( d\pi^{d-1})$. The  quotient of $\ker(\Tr_{T/S})$ by the incoming boundary map is the  copy of $ \F_S^{\oplus n-1}$  in  $\HH_{2a}(A/(p))$.  The copy 
of $\Ann_{S/(p)}( d\pi^{d-1})$  in  the complement of  $\ker(\Tr_{T/S})$ in $T/(p)$ forms  the $\HH_{2a}(S/(p))$ in  $\HH_{2a}(A/(p))$.
\end{proof}

\section{The calculation of $\THH_*( A/(p))$ for $A$ a maximal order in a division algebra over $\Q_p$ }\label{third}

In the introduction, we explained how we can find all the torsion in $\THH_*(U)$ by looking at  $\THH_*(A)$  for appropriate maximal orders $A$ in division algebras over $\Q_p$.  Our approach to the calculation
of  $\THH_*(A)$  is to start with finding 
$\THH_*(A/(p))$  for such algebras.  Then we use that and 
the Brun Spectral Sequence from \cite[Theorem 3.3]{LM00} to calculate $\THH_*(A;A/(p))\cong \pi_*(\THH(A); \F_p)$, which gives us the rank of the $p$-torsion in each dimension, and finally we analyze the order of the torsion.

\begin{prop} \label{modp} Let $A $ be the maximal order in a division algebra over $\mathbb{Q}_p$ with center $S$.
Then there is an isomorphism of $\THH_*(\F_S)$-modules 
\[
\THH_*(A /(p))\cong \THH_*(\F_S)\otimes_{\F_S} \HH^{\F_S} _*(A /(p)).
\]
which can also be viewed as 
an isomorphism of $\THH_*(\F_p)\otimes \F_S$-modules 
\[
\THH_*(A /(p))\cong \THH_*(\F_p)\otimes \HH_*(A /(p)).
\]

\end{prop}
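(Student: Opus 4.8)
The plan is to deduce the splitting from the degeneration of a base-change spectral sequence; the only point that is not formal will be that degeneration, which I would read off from the computation of $\HH^{\F_S}_*(A/(p))$ recalled in Section \ref{second}.

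First, two reductions. Since $R$ is unramified over $\Z_p$ with residue field $\F_S$ we have $R=W(\F_S)$, so $\F_S=R/(p)\subseteq S/(p)$ maps \emph{centrally} into $A/(p)$; in particular $A/(p)$ is an $\F_S$-algebra and $\HH^{\F_S}_*(A/(p))$ makes sense. Because $\F_S/\F_p$ is a finite, hence separable, extension, both topological Hochschild homology and Hochschild homology are insensitive to this base change: $\THH(\F_S)\simeq\THH(\F_p)\wedge_{H\F_p}H\F_S$, so by B\"okstedt $\THH_*(\F_S)\cong\THH_*(\F_p)\otimes_{\F_p}\F_S\cong\F_S[\mu]$ with $|\mu|=2$, and $\HH^{\F_p}_*(A/(p))\cong\HH^{\F_S}_*(A/(p))$. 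Since $\THH_*(\F_S)=\THH_*(\F_p)\otimes\F_S$ as rings, the two displayed isomorphisms are literally the same statement, and it suffices to produce the $\THH_*(\F_S)$-module isomorphism.

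Next, the spectral sequence. The $\F_S$-linear analogue of \cite[Corollary 3.3]{Li00} --- the change-of-base spectral sequence for $\mathbb{S}\to H\F_S\to H(A/(p))$ --- reads
\begin{equation*}
E^2_{r,s}=\HH^{\F_S}_r\bigl(A/(p)\,;\,\THH_s(\F_S;A/(p))\bigr)\ \Longrightarrow\ \THH_{r+s}(A/(p)).
\end{equation*}
Because $\THH_*(\F_S)=\F_S[\mu]$ is free over $\F_S$, the coefficient bimodule $\THH_*(\F_S;A/(p))$ is just the standard bimodule $A/(p)$ with coefficients extended by the graded $\F_S$-vector space $\THH_*(\F_S)$; hence $E^2_{r,s}\cong\HH^{\F_S}_r(A/(p))\otimes_{\F_S}\THH_s(\F_S)$, concentrated in even $s$. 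This is a spectral sequence of $\THH_*(\F_S)=\F_S[\mu]$-modules --- the module structure coming from the central map $\F_S\to A/(p)$, so that $\mu$ is a permanent cycle --- and on $E^2$ it is a free $\F_S[\mu]$-module. If the spectral sequence degenerates at $E^2$, then the associated graded of $\THH_*(A/(p))$ is the free $\F_S[\mu]$-module $\THH_*(\F_S)\otimes_{\F_S}\HH^{\F_S}_*(A/(p))$; since $\THH_*(A/(p))$ is bounded below, the filtration splits as $\F_S[\mu]$-modules and we obtain exactly the claimed isomorphism of $\THH_*(\F_S)$-modules. Thus the whole proposition reduces to the degeneration of this spectral sequence --- equivalently, to the statement that $\THH_*(A/(p))$ is free, or merely torsion-free, over $\F_S[\mu]$ (equivalently, that the Bökstedt class $\mu$ acts injectively on it).

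That degeneration is where the real work lies, and it is the main obstacle. Since $E^2=E^3$, $\F_S[\mu]$-linearity forces the first potentially nonzero differential to be $d^3$, and each $d^{2k+1}$ is determined, after dividing off the appropriate power of $\mu$, by a degree $-(2k+1)$ operation on $\HH^{\F_S}_*(A/(p))$. To kill all of these I would exploit the explicit description of $\HH^{\F_S}_*(A/(p))$ from Section \ref{second}: by the splitting obtained below Equation (\ref{HHlocal1}) and recorded in Corollary \ref{wilddivisionalgebamodp}, $\HH^{\F_S}_*(A/(p))\cong\HH^{\F_S}_*(S/(p))\oplus\F_S^{\oplus n-1}$ in every non-negative degree, and the inclusion $S/(p)\hookrightarrow A/(p)$ matches our spectral sequence, compatibly, with the corresponding one for the commutative truncated polynomial ring $S/(p)=\F_S[\pi]/(\pi^d)$ --- whose topological Hochschild homology over a perfect field is well understood and for which the spectral sequence degenerates --- together with the contribution of the extra $\F_S^{\oplus n-1}$ summands. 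Verifying that these last summands neither support nor receive differentials (equivalently, that $\THH_*(A/(p))$ has no $\mu$-torsion) is the one genuinely non-formal step; once it is in place, the rest of the argument is the bookkeeping of the previous paragraph, together with the étale base-change reductions.
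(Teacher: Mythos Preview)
Your setup is exactly the paper's: the same base-change spectral sequence, the same identification of $E^2$ as $\HH^{\F_S}_*(A/(p))\otimes_{\F_S}\THH_*(\F_S)$, and the same reduction of the proposition to the degeneration at $E^2$. But you then explicitly leave the degeneration unproven, and that is the entire content of the proposition. Your proposed route---compare with the spectral sequence for $S/(p)$ via Corollary~\ref{wilddivisionalgebamodp} and handle the residual $\F_S^{\oplus n-1}$'s separately---does not close the gap. Naturality under $S/(p)\hookrightarrow A/(p)$ (or $T/(p)\hookrightarrow A/(p)$ in the wild case) only tells you that classes \emph{in the image} of that map are permanent cycles; it says nothing about whether the $\F_S^{\oplus n-1}$ summands support differentials, and there is no obvious receiving map that would show they cannot receive them. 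You have not produced any mechanism that controls those summands, and indeed none of the material in Section~\ref{second} does.

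The paper proceeds by an entirely different trick. It does not try to analyze the differentials at all. Instead it introduces the notion of a \emph{weakly monoidal} $k$-algebra (one with a $k$-basis $\mathscr{B}$ such that $\mathscr{B}\cup\{0\}$ is multiplicatively closed) and shows, via the cyclic nerve $N^{cy}(\mathscr{B}_+)$ and linearization, that for any such algebra the spectral sequence collapses and the $\THH_*(k)$-module structure is the free one. The ring $A/(p)$ is not itself known to be weakly monoidal over $\F_S$, but the paper proves that the base change $\F_T\otimes_{\F_S}A/(p)$ \emph{is} weakly monoidal over $\F_T$: the isomorphism $\F_T\otimes_{\F_S}\F_T\cong\bigoplus_{i=1}^n\F_T$ turns it into a ring with explicit basis $\{e_i\pi^jx^k\}$ whose products are basis elements or zero. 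Since the spectral sequence for $\F_T\otimes_{\F_S}A/(p)$ is the one for $A/(p)$ tensored over $\F_S$ with $\F_T$, and $\F_T$ is faithfully flat over $\F_S$, collapse upstairs forces collapse downstairs. This faithfully-flat descent from a monoidal situation is the missing idea; your outline does not contain it or anything that could substitute for it.
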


\begin{proof}
Using the notation of the previous section, recall that $T$ is a degree $n$ unramified extension of $S$, so $T$ has the same uniformizer $\pi$ that satisfies a degree $d$ Eisenstein polynomial $P$ over $R$, and $T$ is generated over $S$ by some element whose reduction modulo $(\pi)$ generates $\F_T$ over $\F_S$.  
Therefore $T/(p)\cong \F_T[\pi]/(\pi^d)$.
By our description of $A$ from Equation (\ref{Astruc}), this means 
\[
A /(p)\cong \ \F_T[\pi]/(\pi^d)\oplus  \F_T[\pi]/(\pi^d)\cdot x \oplus \cdots\oplus  \F_T[\pi]/(\pi^d)\cdot x^{n-1},
\]
where $x^n=0$, $\pi x=x\pi$,  and $fx=x\sigma(f)$ for all $f\in\F_T$ for a generator $\sigma$ of $\Gal(M/N)\cong \Gal(\F_T/\F_S) \cong\Z/n\Z$. 

By \cite[Corollary 3.3]{Li00} applied to the $\F_S$-algebra $A/(p)$ we get a spectral sequence of $\THH_*(\F_S)$-algebras
\[
E_{r,s}^2=\HH^{\F_S} _r(A /(p); \THH_s(\F_S; A /(p))) \Rightarrow \THH_{r+s}(A /(p)).
\]
Since all $\F_S$-modules are free, 
\[
E_{r,s}^2 \cong \HH^{\F_S} _{r}(A /(p))\otimes_{\F_S}  \THH_s(\F_S).
\]
The claim is that this spectral sequence  collapses at $E^2$, and the  $\THH_*(\F_S)$-algebra structure on the $E^2=E^\infty$ term is the correct one.  

The first formulation of the proposition implies the second since $\F_S$ is unramified over $\F_p$ and so
$\THH_*(\F_S) \cong \THH_*(\F_p)\otimes \F_S$ and also (by the same argument as that in Equation (\ref{overramified})), $\HH^{\F_S} _*(A /(p))\cong  \HH_*(A /(p))$.

\begin{dfn}
Let $k$ be a field.  We will say that a  unital $k$-algebra $C$ is  weakly monoidal if it has a basis $\mathscr{B}$ over $k$ so that $\mathscr{B}\cup\{0\}$ is closed under multiplication.
\end{dfn}

Note that this is weaker than the definition of a pointed monoid algebra in \cite[Section 7.1]{HM97}, which also requires that the unit $1\in C$ should be in $\mathscr{B}$. 
However, if $C$ is weakly monoidal, we can still define the cyclic nerve $N^{cy}(\mathscr{B}_+)$, for  $\mathscr{B}_+
= \mathscr{B}\cup\{0\}$, as in \cite[Section 7.1]{HM97}.  It is no longer a simplicial set, just a semisimplicial one.  We can map the suspension spectrum of its semisimplicial realization $\Sigma^{\infty} N^{cy}(\mathscr{B}_+)$ into the `fat' realization of $\THH(C)$: the one that uses only its semisimplicial structure and not the degeneracies.  We can do this for example in B\"okstedt's model for $\THH$ of functors with smash product, which assigns to each level in the spectrum a simplicial space, where the simplicial structure maps and the spectrum structure maps commute; by the theory of simplicial spaces, at each level of the spectrum, if we use a `fat' realization ignoring the degeneracies, we get something homotopy equivalent to the usual realization.  
We still get that
$$\HH^k_*(C)=  \HH^k_*(k[\mathscr{B}])\cong \tilde H_*( N^{cy}(\mathscr{B}_+); k) \cong \pi_*( Hk \wedge N^{cy}(\mathscr{B}_+)).$$
We can also map $Hk \to \THH(C)$ by using the unit map of $C$ and including into the $0$-skeleton.  Since we have a product $\THH(k) \wedge \THH(C) \to \THH(C)$ (because $k$ is in the center of $C$, even if the multiplication of $\mathscr{B}$ is not commutative),   we can map
\begin{equation*} Hk \wedge N^{cy}(\mathscr{B}_+) \to \THH(C) \to \HH^{\mathbb{Z}} (C) \to \HH^k(C).
\end{equation*}
Here the map before last is the  linearization map, and the last map is induced by taking tensor products over $k$ rather than over ${\mathbb{Z}}$. The homotopy groups of the first spectrum and the last spectrum are both $\HH_*^k(C)$, and the composition induces an isomorphism between the two.
On the spectral sequence
 \[ E_{r,s}^2= \HH^k _r(C){\otimes}_k \THH_s(k) \Rightarrow \THH_{r+s}(C),
\]
 linearization and tensoring over $k$ induce the component map $\THH_*(k)\to k$ on the columns.   So if we know that our composition map, which passes through $\THH(C)$, induces an isomorphism on $\HH^k_*(C)$, the spectral sequence differentials $d^r$  have to vanish on the $0$'th row for all $r\geq2$.  Since $\THH_*(k)$ sits in the $0$'th column of a first quadrant spectral sequence, all the $d^r$,  $r\geq 2$ must vanish on it as well.  We recall that our spectral sequence respects multiplication by  $\THH_*(k)$, and deduce from the vanishing of its differentials on $\THH_*(k)$ and $\HH^k_*(C)$  that it
collapses at $E^2$.  We can also deduce that the $E^\infty = E^2$-term has the correct $\THH_*(k)$-algebra structure since it is maximally nontrivial as a $\THH_*(k)$-algebra.  We get that for $C$ weakly monoidal over a field $k$, 
$$\THH_*(C)\cong \HH^k _*(C){\otimes}_k \THH_*(k) .$$

Thus Proposition \ref{modp} would follow directly if we knew that $A /(p)$ was weakly monoidal over 
$\F_S$, but we do not know that.  We have the following, instead:

\begin{lem}
$\F_T\otimes_{\F_S} A /(p)$ is weakly monoidal over $\F_T$.
\end{lem}
\begin{proof}
By our decomposition of $A /(p)$, we know that 
\[
A /(p)\cong\F_T[\pi]/(\pi^d)\oplus\F_T[\pi]/(\pi^d)\cdot x \oplus \cdots\oplus\F_T[\pi]/(\pi^d)\cdot x^{n-1}
\]
with $x^n=0$ and $fx=x\sigma(f)$ for all $f\in\mathbb{F}_{p^n}$ and $\pi x=x\pi$.
Thus 
\begin{align*}
& \F_T  \otimes_{\F_S}  A /(p)  \cong
\\ &
 (\F_T\otimes_{\F_S} \F_T[\pi]/(\pi^d))
 \oplus (\F_T\otimes_{\F_S} \F_T[\pi]/(\pi^d)) x \oplus \cdots \oplus  (\F_T\otimes_{\F_S} \F_T[\pi]/(\pi^d)))x^{n-1}
\end{align*}
where $x$ commutes with the first tensor factor of $\mathbb{F}_{T}$, but not with the second one, in each summand.

The map
$\varphi:\ \F_T\otimes_{\F_S} \F_T \to \bigoplus_{i=1}^n \F_T$
given by
\[
\varphi( a\otimes b) =(ab, a\sigma(b), a\sigma^2(b), \cdots, a\sigma^{n-1}(b))
\]
for the  generator $\sigma\in \Gal(\F_T/\F_S)$ that we have been working with is an algebra isomorphism: it is obviously  a homomorphism. Since its domain and range have the same number of elements, it suffices to show that it is injective.
To show injectivity, let $b_0$ be a primitive element of $\F_T$.  Then the elements $1, b_0,\ldots, b_0^{n-1}$ span  $\F_T$ over  $\F_S$ and $\sigma^j(b_0)$,
 $0\leq j \leq n-1$ are the $n$ distinct roots of $b_0$'s minimal polynomial.  If  $\varphi(\sum_{i=0}^{n-1} a_i \otimes b_0^i )= (0, \ldots, 0)$, we must have $\sum_{i=0}^{n-1} a_i \sigma^j( b_0^i )=
\sum_{i=0}^{n-1} a_i (\sigma ^j(b_0))^i  = 0$ for $0\leq j\leq n-1$.  But then we have a degree $n-1$ polynomial over a field with $n$ distinct roots, which is impossible unless the polynomial is identically zero.

Thus we can identify $\F_T\otimes_{\F_S} A /(p)$ with
\[
\bigoplus_{i=1}^n\F_T[\pi]/(\pi^d)\oplus \bigoplus_{i=1}^n\F_T[\pi]/(\pi^d) \cdot x\oplus \cdots \oplus \bigoplus_{i=1}^n\F_T[\pi]/(\pi^d)\cdot x^{n-1}.
\]
 Let $e_1, \cdots, e_n$ be the standard basis for $\bigoplus_{i=1}^n \F_T$ over $\F_T$, where we take the indices of the $e_i$ to be in $\mathbb{Z}/n$.
 Then the relation
\[
(a\otimes b)x=x(a\otimes \sigma(b))
\]
implies that 
\[
(ab, a\sigma(b), \cdots, a\sigma^{n-1}(b))x=x(a\sigma(b), a\sigma^2(b), \cdots, a\sigma^{n-1}(b), ab),
\]
that is:  $e_ix=xe_{i-1}$ for all $i\in \mathbb{Z}/n$.

The $\F_T$-basis we take for $\F_T\otimes_{\F_S}  A /(p)$ is 
$$\mathscr{B}=\{e_i \pi^j x^k:  1\leq i\leq n, 0\leq j\leq d-1, 0\leq k\leq n-1\}$$
 with the multiplication 
\begin{align*}
(e_ix^j)\cdot(e_kx^l) & = e_i(x^je_k)x^l 
 = e_ie_{k+j}x^jx^l\\
 & = \left\{\begin{array}{ll} e_ix^{j+l} & i\equiv k+j\mod n\text{, and } j+l<n\\ 0 & \text{otherwise}\end{array}\right.
\end{align*}
and powers of $\pi$ commuting with everything.
\end{proof}

By this lemma and the discussion about weakly monoidal algebras preceding it, we get that the spectral sequence 
\begin{align}\label{Etwopn}
E_{r,s}^2 (\F_T\otimes_{\F_S}A /(p))
 = \HH^{\F_T} _r
  (\F_T\otimes_{\F_S}   A /(p))  &{\otimes}_{\F_T}  \THH_s(\F_T)  \\ & \Rightarrow \THH_{r+s}(\F_T\otimes_{\F_S}  A /(p)) \nonumber
\end{align}
collapses at $E^2$.
We observe that the spectral sequence (\ref{Etwopn})
is obtained from the analogous spectral sequence
\begin{equation}\label{Etwop}
 E_{r,s}^2( A /(p)) = \HH_r^{\F_S} (A /(p)) {\otimes}_{\F_S}  \THH_s(\F_S) \Rightarrow \THH_{r+s}( A /(p))
 \end{equation}
 by tensoring it over $\F_S$ with  $\F_T$: Since  $\F_T$ is flat over $\F_S$, 
$ \HH_*^{\F_T} (\F_T\otimes_{\F_S}  A /(p)) \cong \F_T \otimes_{\F_S} 
 \HH_*^{\F_S} (A /(p))$, and since $\F_T$ is \'etale over $\F_S$, 
 $ \THH_*(\F_T) \cong \F_T \otimes_{\F_S}   \THH_*(\F_S)$.

Note that tensoring with $\F_T$ is faithfully flat for $\F_S$-modules, and the $\F_T$ in the $E^2$ term in (\ref{Etwopn}) sits in bidegree $(0,0)$ so all differentials have to vanish on it for dimension reasons. We get that the spectral sequence (\ref{Etwopn}) calculating $\THH_*(\F_T\otimes_{\F_S}  A /(p))$ collapses at $E^2$ if and only if the spectral sequence (\ref{Etwop}) calculating $\THH_*(A /(p))$ does.  But the spectral sequence (\ref{Etwopn}) does collapse.  So
\[
E_{r, s}^\infty(A /(p))\cong E_{r, s}^2(A /(p))\cong \HH_r^{\F_S} (A /(p))\otimes_{\F_S} \THH_s(\F_S).
\]
The only remaining thing to check in order to complete the proof of  Proposition \ref{modp} is that $\THH_*(A /(p))\cong \HH_*^{\F_S} (A /(p))\otimes_{\F_S} \THH_*(\F_S)$ also as a $ \THH_*(\F_S)$-module.
But that follows since the multiplication by $ \THH_*(\F_S)$ is maximally nontrivial in the $E^\infty$-term.
\end{proof}

\section{The calculation of $\THH_*( A, A/(p))$ for $A$ a maximal order in a division algebra over $\Q_p$ }\label{fourth}
Our calculation will depend on whether $A$'s center $S$ is wildly ramified over $\Z_p$ or not.   Recall from Equation (\ref{Sstruc}) that if $R$ denotes the valuation ring of the maximal unramified extension of $\mathbb{Q}_p$ inside the center of the division algebra in question, we have $S\cong R[\pi]/(P(\pi))$ for $P$ an Eisenstein polynomial of degree $d$.  The ramification is wild when $p|d$, and otherwise it is tame.  The unramified case can be viewed as the case $d=1$.  Since $P$ is an Eisenstein polynomial,  if $\F_S=R/(p)=S/(\pi)$ is the residue field of $R$ and $S$, we get that $S/(p) \cong \F_S[\pi]/(\pi^d)$.

From \cite[Theorem 5.1]{LM00}, the local version of Theorem 1.1 that was quoted in the introduction, we get $S$-module isomorphisms
\begin{equation}\label{THHS}
\THH_i(S) \cong
\begin{cases}
S& i=0\\ 
 S/ (a P'(\pi)) &i=2a-1>0\\
0 & i=2a > 0. \end{cases}
\end{equation}
 Using the Universal Coefficient Theorem over $S$, we can  calculate $\THH_*(S; S/(p))$: modulo $p$,
 $P'(\pi)$ reduces to the same thing as $d\pi^{d-1}$  which is divisible by $p$ if $p|d$ but divides $p$  if $p\nmid d$.  So if $p|d$, for all for $i\geq 0$
 \begin{equation}\label{THHSSpwild}
\THH_i(S, S/(p))\cong  S/(p) \cong \F_S[\pi]/(\pi^d)
 \end{equation}
and if $p\nmid d$ (which includes the case of $S$ unramified over $\Z_p$, where $R=S$ and $d=1$),
\begin{equation}\label{THHSSptame}
\THH_i(S, S/(p)) \cong
\begin{cases}
S/(p) \cong \F_S[\pi]/(\pi^d)& i=0\  \mathrm{or} \ 2pk-1 \  \mathrm{or} \ 2pk, \ k\geq 1\\ 
S/( \pi^{d-1}) \cong \F_S[\pi]/(\pi^{d-1})& i=2k\  \mathrm{or}\ 2k-1,\ k\geq 0, \  p\nmid k. \end{cases}
\end{equation}

\medskip
\begin{thm} \label{onemodponenot}
Let $A$ be the maximal order in a division algebra over $\mathbb{Q}_p$ of degree $n$ (so of dimension $n^2$) over its center.  Let $S$ be the valuation ring of the center of the division algebra,
and let $\F_S$ be its residue field.
Then for all $i\geq 0$, there is an isomorphism of $S$-modules
\[
\THH_i(A, A/(p))\cong \THH_i(S, S/(p)) \oplus \F_S^{\oplus n-1}.
\]
When $p\nmid n$, the $\THH_i(S, S/(p))$ in this decomposition is  the isomorphic image of the map $\THH_i(S, S/(p))
\to \THH_i(A, A/(p))$ induced by the inclusion $S\hookrightarrow A$.  When $p|n$, that is not true but we do have that for $T$ the valuation ring of a degree $n$ unramified extension of the center that exists inside $A$, the inclusion  $T\hookrightarrow A$ induces a map $\THH_i(T, T/(p))\to \THH_i(A, A/(p))$ which in terms of the decomposition above, surjects onto the first factor.
The notation $\THH_i(S, S/(p))$ in the decomposition should be viewed when $p|n$ as shorthand notation for the different cases
(\ref{THHSSpwild}) and (\ref{THHSSptame}) above.
\end{thm}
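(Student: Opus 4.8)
The plan is to run the Brun spectral sequence of \cite[Theorem 3.3]{LM00}, feeding in the computation of $\THH_*(A/(p))$ from Proposition \ref{modp}. Since $A$ is an order in a $\mathbb{Q}_p$-algebra it is $p$-torsion free, so the resolution $0\to A\xrightarrow{\,p\,}A\to A/(p)\to 0$ shows that $\Tor^A_t(A/(p),A/(p))$ is the standard $A/(p)$-bimodule for $t=0,1$ and vanishes for $t\ge 2$. Thus the spectral sequence
\[
E^2_{s,t}=\THH_s\!\bigl(A/(p);\Tor^A_t(A/(p),A/(p))\bigr)\ \Longrightarrow\ \THH_{s+t}(A;A/(p))
\]
is concentrated in the two rows $t=0,1$, each isomorphic to $\THH_*(A/(p))$, it has a single family of potentially nonzero differentials $d^2\colon E^2_{s,0}\to E^2_{s-2,1}$, and $E^3=E^\infty$. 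Consequently $\THH_n(A;A/(p))$ is an extension of $\ker\!\bigl(d^2\colon\THH_n(A/(p))\to\THH_{n-2}(A/(p))\bigr)$ by $\mathrm{coker}\!\bigl(d^2\colon\THH_{n+1}(A/(p))\to\THH_{n-1}(A/(p))\bigr)$, so everything reduces to (i) knowing $\THH_*(A/(p))$ as an $S$-module and (ii) identifying $d^2$.

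For (i): Proposition \ref{modp} gives $\THH_*(A/(p))\cong\THH_*(\F_S)\otimes_{\F_S}\HH^{\F_S}_*(A/(p))$ as $\THH_*(\F_S)$-modules, and the mod-$p$ reductions of Larsen's small complex (Corollaries \ref{tamedivisionalgebamodp} and \ref{wilddivisionalgebamodp}, built on the complex (\ref{small}) and on Lemma \ref{themap}) present $\HH^{\F_S}_*(A/(p))$ as the homology of a complex of $S/(p)$-modules which splits as a complex computing $\HH^{\F_S}_*(S/(p))$ together with a complex $\mathcal{C}$ of \emph{free} $S/(p)$-modules of the periodic shape of (\ref{fourthcomplex}), whose homology is $\F_S^{\oplus n-1}$ in every nonnegative degree. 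Applying Proposition \ref{modp} also to $S/(p)=\F_S[\pi]/(\pi^d)$, we get a splitting
\[
\THH_*(A/(p))\ \cong\ \THH_*(S/(p))\ \oplus\ \bigl(\THH_*(\F_S)\otimes_{\F_S}H_*(\mathcal{C})\bigr),
\]
which by the same corollaries is compatible with the maps induced by $S\hookrightarrow A$ when $p\nmid n$, and by $T\hookrightarrow A$ in general (where, by Corollary \ref{wilddivisionalgebamodp}, $\THH_*(T/(p))$ surjects onto the first summand).

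For (ii): I would show the Brun spectral sequence respects this splitting. By naturality of the Brun spectral sequence along $S\to A$ (respectively $T\to A$ in the wild case) together with the compatibility in (i), the subobject built on $\THH_*(S/(p))$ is identified with the Brun spectral sequence computing $\THH_*(S;S/(p))$ (respectively with a summand receiving a surjection from that for $T$), whose abutment is recorded in (\ref{THHSSpwild}) and (\ref{THHSSptame}); this both gives the $\THH_i(S,S/(p))$-part of the answer and proves the two naturality assertions of the theorem. On the complementary summand $\THH_*(\F_S)\otimes_{\F_S}H_*(\mathcal{C})$ the differential $d^2$ is $\THH_*(\F_S)$-linear, hence determined by its restriction to the $\mu$-degree-zero part $H_*(\mathcal{C})$; the claim to establish is that this restriction $H_k(\mathcal{C})\to H_{k-2}(\mathcal{C})$ is an isomorphism for $k\ge 2$ and vanishes for $k\le 1$, which can be checked from the weakly monoidal algebra $\F_T\otimes_{\F_S}(A/(p))$ (where the cyclic-nerve description underlying Proposition \ref{modp} is available) together with descent along the faithfully flat map $\F_S\to\F_T$. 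Granting this, a short bookkeeping computation shows that in each total degree the two rows of the complementary summand leave exactly $\F_S^{\oplus n-1}$ after passing to $E^\infty$, completing the proof.

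The main obstacle is step (ii), and specifically the identification of $d^2$ on the complementary summand: it is not zero, and one must show the cancellation it produces is exactly right. Establishing that the splitting of $\THH_*(A/(p))$ is compatible with the inclusion-induced maps is the place where the wild case genuinely needs $T$ rather than $S$ — modulo $p$ the inclusion $S\hookrightarrow A$ no longer splits off $\HH^{\F_S}_*(S/(p))$, since $\Tr_{T/S}$ restricted to $S$ is multiplication by $n\equiv 0$. Once $d^2$ is pinned down, the residual $S$-module extension is harmless: every term is an $\F_p$-vector space and the $\F_S[\pi]/(\pi^d)$-action is visible throughout the spectral sequence, so reading off $E^\infty$ yields the stated isomorphism.
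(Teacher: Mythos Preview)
Your setup matches the paper's, but step (ii) contains a genuine error and two unjustified claims. The error: $d^2$ is \emph{not} $\THH_*(\F_S)$-linear. The Brun spectral sequence for $A$ is a module over the one for $\Z_p$, and there $d^2(u)=\tau$; hence $d^2(u^j[m])=j\tau\,u^{j-1}[m]+u^j\,d^2([m])$. The $j\tau u^{j-1}$ term is precisely what produces the dichotomy between $p\mid k$ and $p\nmid k$ in (\ref{THHSSptame}), so any bookkeeping that treats $d^2$ as $u$-linear cannot recover the answer. Relatedly, naturality along $S\hookrightarrow A$ (or $T\hookrightarrow A$) does not by itself show that $d^2$ preserves your splitting: it controls $d^2$ on the image of the smaller spectral sequence, but says nothing about whether $d^2$ carries the complement back into that image. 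The paper establishes compatibility only as a consequence of Proposition~\ref{keylemma}, which computes $d^2([m])=-\tau[m]$ on \emph{all} of $\HH_*(A/(p))$ by an explicit chain-level calculation lifting Larsen's generators into the double complex $V.(\ZAp,B.(\Ap,A,\Ap))$; that formula together with the Leibniz rule above is what actually does the work. Your proposal to read off $d^2$ on the complement from the weakly monoidal model of $\F_T\otimes_{\F_S}(A/(p))$ lacks a mechanism, since the cyclic-nerve description feeds into $\THH(A/(p))$, not into the Brun filtration on $\THH(A;A/(p))$.

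The extension step is also not harmless for the $S$-module structure. When $p\nmid d$ but $p\mid k$, the diagonal $2k-1$ carries two nonzero $E^\infty$-terms, $\F_S$ and $\F_S[\pi]/(\pi^{d-1})\oplus\F_S^{\oplus n-1}$; as $\F_p$-vector spaces all extensions coincide, but as $S/(p)\cong\F_S[\pi]/(\pi^d)$-modules the split extension gives the wrong answer. The paper uses the comparison with the spectral sequence for $T$ together with Lemma~\ref{extensionimage} to show the extension is maximally nontrivial; some argument of that kind is needed, and ``the $\F_S[\pi]/(\pi^d)$-action is visible'' does not supply it.
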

\begin{proof}
We will use the Brun spectral sequence from \cite[Theorem 3.3]{LM00} which is associated to the reduction map $ A\to A/(p)$.  It is of the form 
\begin{equation}\label{Brun}
E^2_{r, s} =\THH_r(A/(p), \Tor ^{A}_s(A/(p), A/(p)) )\Rightarrow \THH_{r+s} 
(A, A/(p))
\end{equation}
which, since  $\Tor ^{A}_*(A/(p), A/(p))$ is just $ A/(p)$ in dimensions $0$ and $1$, consists of two rows, each isomorphic to $\THH_*(A/(p))$.  If we let $\tau$ denote a generator of $\Tor ^{A}_1(A/(p), A/(p))$ in bidegree $(0,1)$ in the spectral sequence, then
$E^2_{*,*} \cong \F_p[\tau]/\tau^2 \otimes \THH_*(A/(p))$, and by Proposition \ref{modp} and B\"{o}kstedt's calculation \cite{Bo} of $\THH_*(\F_p)\cong \F_p[u]$ for a $2$-dimensional generator $u$,
$$E^2_{*,*} \cong \F_p[\tau]/\tau^2 \otimes  \F_p[u]  \otimes \HH_*(A/(p)).$$

As explained in Equation (\ref{HHlocal}) above, \cite{L95} shows that
\begin{equation*}
\HH_i^{\Z_p}(A)\cong \HH_i^R(A) \cong
\begin{cases}
S\oplus \F_S^{\oplus n-1}& i=0\\ 
 S/ (P'(\pi)) \cong \HH_{2a-1}^{\Z _p}(S) &i=2a-1>0\\
 \F_S^{\oplus n-1} & i=2a > 0. \end{cases}
\end{equation*}
The relative Hochschild homology $\HH_*  ^{\mathbb{Z}_p} (A)$ is actually smaller than $\HH_*^\mathbb{Z}(A) $.  We want to use the above result to get $\HH_*(A/(p))$, and for that the difference does not matter: The Hochschild complex for the quotient ring $A/(p)$ is just $\F_p$ tensored with  the Hochschild complex for the ring $A$, and once we tensor with  $\F_p$,
$A^{\otimes _{\Z_p} (i+1)}  \otimes \F_p   \cong A^{ {\otimes _\Z} (i+1)} \otimes \F_p$ for all $i\geq 0$.

Applying   the Universal Coefficient Theorem to the Hochschild complex of $A$, observing that everything except for the $S$ in dimension zero in Equation (\ref{HHlocal}) is torsion of order which is a power of $p$, we therefore get
 \[
\HH_i(A/(p)) \cong \left\{\begin{array}{ll}  S/(p) \oplus \F_S^ {\oplus (n-1)} & \text{ if } i=0 \\ \F_S^ {\oplus (n-1)} \oplus S/ (p, P'(\pi))  & \text{ if } i>0.\end{array}\right.  
\]
We would also like to understand this homology  in terms of generators, so we view the Hochschild homology of $A/(p)$  in terms of
the small complex in Equation (\ref{small}).  In the proof of \cite[Theorem 3.5]{L95}, Larsen shows that the reduced Hochschild complex of $A$ is a direct sum of the small complex (\ref{small}) and another complex, and that the inclusion induces a quasi-isomorphism.  Thus, the other summand must be acyclic.  If we tensor everything with $\F_p$, the direct sum decomposition will continue to hold and the second complex will continue to be acyclic, so the modulo $p$ version of Equation  (\ref{small}) calculates $\HH_*(A/(p))$:
\begin{equation}\label{psmall}
\xymatrix@1{0\quad  & \quad T/(p) \quad \ar[l] &\quad T/(p) \quad \ar[l]_{\bar\pi(1-\bar\sigma^{-1})} &\quad T/(p) \quad \ar[l]_{{\overline{P'(\pi)}} \bar \Tr} &\quad T/(p)\cdots  \quad \ar[l]_{\bar\pi(1-\bar\sigma^{-1})}} ,
\end{equation}
where we use bars to denote the reductions of the elements and functions modulo $p$.  Moreover, the reduction modulo $p$ of the original quasi-isomorphism will continue to be a quasi-isomorphism.

\begin{prop}\label{keylemma}
In the spectral sequence (\ref{Brun}), $d^2$ sends
$\HH_i(A/(p)) \subset E^2_{i, 0}$ to
$\tau \HH_{i-2}(A/(p)) \subset E^2_{i-2, 1}$ for all $i\geq 2$, and in terms of the complex (\ref{small}),
$$d^2([m]) =- \tau [m]$$
for all $m\in \ker ( \bar\pi(1-\bar\sigma^{-1}))$ if $i$ is odd and for all $m\in \ker ({\overline{P'(\pi)}} \bar \Tr)$ if $i>0$ is even.  Thus, it induces an isomorphism between $\HH_i(A/(p)) \subset E^2_{i, 0}$ and
$\tau \HH_{i-2}(A/(p)) \subset E^2_{i-2, 1}$ if $i>2$, and an inclusion when $i=2$.
\end{prop}

We will postpone the proof to the end of the section, and see how this Proposition lets us complete the proof of Theorem \ref{onemodponenot}.
Note that the spectral sequence (\ref{Brun}) is multiplicative with respect to multiplication by the corresponding spectral sequence for $\THH(\Z;\Z/(p))$, since $\Z$ lies in the center of $A$.  There we have $\F_p[\tau]/\tau^2 \otimes  \F_p[u]$ with $d^2(u)=\tau$ mapping into the $u$ and $\tau$ that we have in (\ref{Brun}).  Thus for $[m]\in \HH_i(A/(p))$ and $j\geq 0$,
$$d^2(u^j[m]) =\tau j u^{j-1}[m]+ u^j d^2([m]) = \tau (j u^{j-1}[m]- u^j [m]).$$
When $i=2k$ is even, we get that $d^2:E^2_{i, 0}\to E^2_{i-2, 1}$ is given by
\begin{align}\label{dtwo}
& d^2  (\sum_{i=0}^ k u^{k-i} [a_i])
= \tau (
u^{k-1} [ka_0-a_1] +
\!\cdots \!+
u[2a_{k-2}-a_{k-1}] + 1[a_{k-1}-a_k])
\end{align}
for all $a_0\in T/(p)$, $a_i\in \ker ( {\overline{P'(\pi)}} \bar \Tr)$ for $1\leq i\leq k$.  

Equation (\ref{dtwo}) means that $\im( d^2)$ consists of all those  $\tau \sum_{i=0}^ {k-1}  u^{k-1-i} [b_i]$ where 
$$[b_0] \in  (kT/(p)+ \ker ( {\overline{P'(\pi)}} \bar \Tr) )/ \im ( \bar\pi(1-\bar\sigma^{-1}))$$
 and 
 $$b_i\in \ker ( {\overline{P'(\pi)}} \bar \Tr)$$ 
 for $1\leq i\leq k-1$.  Thus, $d^2$ surjects onto $E^2_{2k-2, 1}$ when $p \nmid k$ and so $kT/(p)=T/(p)$.  
 
 If $p|k$, the image of $d^2$ consists of all those  $\tau \sum_{i=0}^ {k-1}  u^{k-1-i} [b_i]$ where 
$[b_0] \in  \ker ( {\overline{P'(\pi)}} \bar \Tr) )/ \im ( \bar\pi(1-\bar\sigma^{-1}))$, but there are no restrictions on the $b_i \in \ker ( {\overline{P'(\pi)}} \bar \Tr)$  for $1\leq i\leq k-1$.  So if $p|k$, the cokernel of $d^2:E^2_{i, 0}\to E^2_{i-2, 1}$
is isomorphic to the quotient of $(T/(p)) /\im ( \bar\pi(1-\bar\sigma^{-1}))$ by $\ker ( {\overline{P'(\pi)}} \bar \Tr) )/ \im ( \bar\pi(1-\bar\sigma^{-1})$, which we now analyze.
We showed that $\Tr: T\to S$ is surjective, yielding a short exact sequence of $S$-modules $0\to\ker(\Tr)\to T\to S\to 0$ which must split.  The splitting is not the obvious inclusion $S\to T$.  If $p\nmid n$, we  can take $1/n$ times the inclusion as our splitting, but we cannot do this if $n$ is not a unit. The splitting does in any case give  a splitting of $S/(p)$-modules $T/(p)\cong \ker(\Tr)/(p) \oplus S/(p)$.   We intend to divide by $\ker ( {\overline{P'(\pi)}} \bar \Tr) )$ which contains  the first summand, so we can ignore the first summand.  The first summand  also  contains all of $ \im ( \bar\pi(1-\bar\sigma^{-1}))$.   So our cokernel is the quotient of the second summand $S/(p)$ by its intersection with $\ker ( {\overline{P'(\pi)}} \bar \Tr) $.  
By construction, $\bar\Tr$ sends this second summand isomorphically onto $S/(p)$.  Thus the only way an element of it can be in $\ker ( {\overline{P'(\pi)}} \bar \Tr)$ is if it is in $\Ann_{S/(p)}  {\overline{P'(\pi)}}$.  We get that the cokernel is 
$$S/(p)/(  \Ann_{S/(p)}  {\overline{P'(\pi)}} )\cong 
\begin{cases}
( \F_S[\pi]/(\pi^d)) / ( \F_S[\pi]/(\pi^d))\cong 0 &\mathrm{if}\ p|d,\\
( \F_S[\pi]/(\pi^d)) / (\pi \F_S[\pi]/(\pi^d))\cong \F_S  &\mathrm{if }\ p\nmid d.
\end{cases}
$$

Equation (\ref{dtwo}) also means that 
$\ker(d^2)$ consists of all those $ \sum_{i=0}^ k u^{k-i} [a_i]$ where $[ka_0] \in  \ker ( {\overline{P'(\pi)}} \bar \Tr)/ \im ( \bar\pi(1-\bar\sigma^{-1}))$ and the other $[a_i]$ are calculated inductively from  $[a_0]$ by
the requirement that $[ia_{k-i}-a_{k-(i-1)}] = [0]$ for all $1\leq i\leq k$.
Thus, if $p$ does not divide $k$, the kernel is exactly isomorphic to 
$$ \ker ( {\overline{P'(\pi)}} \bar \Tr)/ \im ( \bar\pi(1-\bar\sigma^{-1}))\cong \HH_2(A/(p))\cong \HH_{2a}(A/(p))\mathrm{\  for\  any\ } a>0,$$
 whereas if $p|k$, $[ka_0]=0$ and so the kernel is is all of $\HH_0(A/(p))$.

The calculation for $i$ odd is similar, but easier since all the odd Hochschild homology groups of $A/(p)$ are isomorphic.  So when $i$ is odd, $d^2:E^2_{i, 0}\to E^2_{i-2, 1}$ is always surjective, with kernel always isomorphic to $\HH_1(A/(p))$.

We gather all this information together to get 
\begin{equation*}
E^3_{s,t} \cong E^\infty_{s,t}\cong 
\begin{cases}
\HH_0(A/(p)) & t=0,\ s=2k,\ p|k\\ 
\HH_{2k}(A/(p))  & t=0,\ s=2k,\ p\nmid k\\ 
\HH_{2k-1}(A/(p))  & t=0,\ s=2k-1\\ 
\F_S  & t=1,\ s=2k-2,\ p|k\ \mathrm{but}\ p\nmid d\\
0  &  \mathrm{otherwise} \end{cases}
\end{equation*}
If we look only at the structure of vector spaces over $\F_S$, there can be no nontrivial extensions.  Even if we want to get the full $S$-module structure, that is: the $S/(p)\cong \F_S[\pi]/(\pi^d)$-module structure, the only case in which we have more than one nonzero module on a diagonal is in dimensions $2k-1$ if $p\nmid d$ but $p|k$.  The final result is an isomorphism of $S$-modules
\begin{align*}
\THH_i(A, & A/(p))\cong 
\\&
\begin{cases}
\HH_0(A/(p)) \cong  \F_S[\pi]/(\pi^d) \oplus \F_S^ {\oplus (n-1)}& i=2k,\ p|k\\ 
\HH_{2k}(A/(p)) \cong  \F_S[\pi]/(\pi^d, d\pi^{d-1} ) \oplus \F_S^ {\oplus (n-1)}&i=2k,\ p\nmid k\\ 
  \F_S[\pi]/(\pi^d) \oplus \F_S^ {\oplus (n-1)}& i=2k-1,  \ p|k\\
\HH_{2k-1}(A/(p))\cong  \F_S[\pi]/(\pi^d, d\pi^{d-1} ) \oplus \F_S^ {\oplus (n-1)}  & i=2k-1,\ p\nmid k.\\ 
\end{cases}
\end{align*}
The only case requiring justification is the extension
$$0\to E^\infty_{2k-2, 1} \to \THH_{2k-1} (A, A/(p)) \to E^\infty_{2k-1,0}\to 0$$
 which takes the form
$$0\to\F_S\to  \THH_{2k-1} (A, A/(p)) \to \F_S[\pi]/(\pi^{d-1} ) \oplus \F_S^ {\oplus (n-1)} \to 0$$
when $p\nmid d$ but $p|k$.  In that case,  we claim that the extension is nontrivial.  

In the case where $p\nmid n$  this follows directly from of \cite[Proposition 5.6({\it ii})]{LM00}, where the analogous extension problem is solved for $S$.   By Corollary \ref{tamedivisionalgebamodp}, the first summand in $\HH_i(A/(p) )\cong \HH_i(S/(p)) \oplus \F_S^ {\oplus (n-1)}$ for all $i$ is the isomorphic image of $\HH_i(S/(p))$ by the map induced by the inclusion $S\hookrightarrow A$.  By the naturality of the Brun spectral sequence (\ref{Brun}), the extension problem we see here is exactly that in \cite{LM00}, where the result is that $\THH_{2k-1}(S, S/(p)) \cong  \F_S[\pi]/(\pi^d)$.  The only difference is that in our calculation, we also have an extra copy of 
$ \F_S^ {\oplus (n-1)}$ added on as a direct summand in the whole group and in the quotient.

In the case where $p\mid n$, the analysis of the extension in \cite[Proposition 5.6({\it ii})]{LM00} is completely analogous for $T$ and for $S$, so if we are only interested in the $S$-module structure, the extension problem in the analogous calculation of $ \THH_{2k-1} (T, T/(p))$ using  the Brun spectral sequence is 
$$0\to\F_S^ {\oplus (n)} \to  \THH_{2k-1} (T, T/(p)) \to( \F_S[\pi]/(\pi^{d-1} ) ) ^ {\oplus (n)} \to 0.$$
 The conclusion, again if we are only interested in the $S$-module structure, is that $\THH_{2k-1} (T, T/(p)) \cong ( \F_S[\pi]/(\pi^{d} ) ) ^ {\oplus (n)}$.  Instead of Corollary \ref{tamedivisionalgebamodp}, we have the weaker Corollary \ref{wilddivisionalgebamodp}.  We still have the naturality of the Brun spectral sequence.  Its spectral sequence differentials respect the decomposition of $\HH_*(A/(p))$ and $\THH_*(A/(p))$ because of the explicit formula in Proposition \ref{keylemma}.  So the inclusion $T\hookrightarrow A$ induces maps sending the Brun spectral sequence $E^\infty_{2k-2, 1}$  for  
$\THH_{2k-1} (T, T/(p))$ surjectively onto the
$E^\infty_{2k-2, 1}$  for  $\THH_{2k-1} (A, A/(p))$, and sending the  $E^\infty_{2k-1, 0}$  for  $\THH_{2k-1} (T, T/(p))$ surjectively onto the first factor in the $E^\infty_{2k-1, 0} \cong \F_S[\pi]/(\pi^{d-1} ) \oplus \F_S^ {\oplus (n-1)}$ we have in the calculation of $\THH_{2k-1} (A, A/(p))$.

The nontriviality of the extension for $A$ can therefore be deduced from the nontriviality of the analogous extension for $T$ by the following algebraic lemma, for $\F=\F_S$ and $M$ the image of $\THH_{2k-1} (T, T/(p))$  in $\THH_{2k-1} (A, A/(p))$.
\end{proof}

\begin{lem}\label{extensionimage}
Let $\F$ be a field and let $d\geq 2$, $n\geq 1$ be integers.  Assume that in the commutative diagram of $\F[x]/(x^d)$-modules
$$\xymatrix{
0\ar[r]
 &{( x^{d-1} \F[x]/(x^d) )^ {\oplus n} } \ar[r] ^{i}\ar[d] ^{f_0}
 & {( \F[x]/(x^d) )^ {\oplus n} }\ar[r]^q \ar[d] ^{f_1}
& {( \F[x]/(x^{d-1}) )^ {\oplus n} } \ar[r] \ar[d]^ {f_2}
& 0\\
0\ar[r]
& \F \ar[r]^{i_M}
& M  \ar[r]
& {\F[x]/(x^{d-1}) } \ar[r]
& 0
}
$$
the maps $i$ and $q$ in the top row are the usual inclusion and quotient maps, the bottom row is also exact, and the maps $f_0$ and $f_2$ are surjections.  Then we have an $\F[x]/(x^d)$-module isomorphism
$M\cong \F[x]/(x^d)$.
\end{lem}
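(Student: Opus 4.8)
The plan is to reduce everything to producing a single element $m\in M$ with $x^{d-1}m\neq 0$, and then conclude by elementary module theory over $\F[x]/(x^d)$. From the exactness of the bottom row (which I use only to know that $i_M$ is injective and that $\dim_\F M = 1+(d-1)=d$), and from the fact that the ideals of $\F[x]/(x^d)$ form the chain $(1)\supsetneq(x)\supsetneq\cdots\supsetneq(x^{d-1})\supsetneq(0)$ — so that $(0)$ is the unique ideal not containing $x^{d-1}$ — one gets: if $m\in M$ satisfies $x^{d-1}m\neq 0$, then $\Ann_{\F[x]/(x^d)}(m)=(0)$, the cyclic submodule $\F[x]/(x^d)\cdot m$ is free of rank one and hence of $\F$-dimension $d$, and by the dimension count it must be all of $M$. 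This yields the asserted isomorphism $M\cong\F[x]/(x^d)$.

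To find such an $m$ I would use the left-hand square together with the surjectivity of $f_0$. Since $d\geq 2$, the module $\bigl(x^{d-1}\F[x]/(x^d)\bigr)^{\oplus n}$ is killed by $x$; it is an $n$-dimensional $\F$-vector space with basis $e_1,\dots,e_n$, where $e_j$ has $x^{d-1}$ in the $j$-th coordinate and $0$ elsewhere, and one has $e_j = x^{d-1}w_j$ inside $\bigl(\F[x]/(x^d)\bigr)^{\oplus n}$, with $w_j$ the $j$-th standard basis vector. Because $f_0$ surjects onto $\F\neq 0$, there is an index $j$ with $f_0(e_j)\neq 0$. Put $m:=f_1(w_j)$. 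Using that $f_1$ is $\F[x]/(x^d)$-linear and that $i_M\circ f_0 = f_1\circ i$ with $i$ the coordinatewise inclusion, we get
\[
x^{d-1}m \;=\; f_1\bigl(x^{d-1}w_j\bigr) \;=\; f_1\bigl(i(e_j)\bigr) \;=\; i_M\bigl(f_0(e_j)\bigr),
\]
which is nonzero since $i_M$ is injective. This is exactly the element required in the first paragraph.

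There is no genuinely hard step here; in fact the argument uses neither the surjectivity of $f_2$ nor the exactness of the top row beyond the description of $i$. The only things to be careful about are the elementary structure theory of $\F[x]/(x^d)$-modules and the dimension bookkeeping. For a more conceptual (but longer) variant one can instead observe that $\mathrm{Ext}^1_{\F[x]/(x^d)}\bigl(\F[x]/(x^{d-1}),\F\bigr)\cong\F$, so the bottom row is determined up to a scalar by a single extension class; the vertical maps identify that class with the image under $f_0$ of the class of the top row, which in each coordinate is the nonsplit class of $0\to\F\to\F[x]/(x^d)\to\F[x]/(x^{d-1})\to 0$, and surjectivity of $f_0$ forces the resulting class to be nonzero.
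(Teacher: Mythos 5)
Your proposal is correct and follows essentially the same route as the paper's proof: both use the surjectivity of $f_0$, the commutativity of the left square, and the injectivity of $i_M$ to produce an element $m\in M$ with $x^{d-1}m\neq 0$, then note that the cyclic submodule $\F[x]/(x^d)\cdot m$ must be free of rank one and conclude by the dimension count $\dim_\F M=d$. The only cosmetic difference is that you pick a standard basis vector $e_j$ with $f_0(e_j)\neq 0$ rather than a preimage of $1_\F$, which changes nothing essential.
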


\begin{proof} (of Lemma \ref{extensionimage})

Since $f_0$ is surjective, there exist some elements $\gamma_1, \gamma_2,\ldots, \gamma_n\in \F$ so that $i_M(1_\F)= f_1 (\gamma_1 x^{d-1}, \gamma_2 x^{d-1}, \ldots, \gamma_nx^{d-1})$.  The elements $\gamma_1, \gamma_2,\ldots, \gamma_n$ cannot all be zero because $i_M$ is an injection.  Set $\underline{v}=(\gamma_1, \gamma_2,\ldots, \gamma_n)$.  Then $f_1(\underline{v})$ generates over 
$\F[x]/(x^d)$ a free submodule of $M$.  This is because a cyclic $\F[x]/(x^d)$-module is either free or it is a $\F[x]/(x^{d-1})$-module, but we know that
$$x^{d-1}f_1(\underline{v})=f_1(x^{d-1} \underline{v}) =i_M(1_\F)\neq 0.$$
Once we know that $\F[x]/(x^d)\cdot f_1(\underline{v})$ is a free $\F[x]/(x^d)$-module, we know that it has dimension $d=\dim_\F M$ over $\F$, so by counting dimensions it must be equal to all of $M$.
\end{proof}
\begin{proof} (of Proposition \ref{keylemma})

For any abelian group $A$ and a pointed simplicial set $X.$, let $A(X.)=A[X.]/A\cdot *$.  If $A$ is a simplicial abelian group, we define $A(X.)$ similarly, taking the diagonal of the resulting bisimplicial abelian group.  For any rings $R$ and  $A$, we let $R(A)=R[A]/R\cdot 0$ be the the ring which is additively a free $R$-module on $A$'s nonzero elements, with the inherited multiplication; for any left $R$-algebra $M$ and right $R$-algebra $N$, let $B.(M,R,N)= M\blb R\blb R\cdots  R \blb  N\brb\cdots\brb\brb$ be the bar construction using iterations of the previous construction, with $k$ copies of $R$ in degree $k$.  Since $B.(R,R,N)$ is a free $R$-resolution of $N$, $\pi_*(B.(M,R,N))=\Tor_*^R(M,N)$.  There is an obvious left $M$-module structure on  $B.(M,R,N)$, and also a right $\mathbb{Z}(N)$-module structure where the $\mathbb{Z}$  multiplies into the $M$ on the left. 

For an $R$-bimodule $M$, let $V.(R,M)$ be the Hochschild complex of $R$ with coefficients in $M$, $V_k(R,M)=M\otimes R^{\otimes k}$.  Let ${ \bf V}.(R, M)$ be the stabilization of $V.(\mathbb{Z}(R),M)$ given by
$$\mathrm{holim}_{I^{k+1}}\mathrm{s.Ab}(\mathbb{Z}(S^{x_0}.) \otimes\cdots\otimes \mathbb{Z}(S^{x_k}. ),
M(S^{x_0}.)\otimes \mathbb{Z}(R) (S^{x_1}.)\otimes\cdots\otimes  \mathbb{Z}(R) (S^{x_k}.)),$$
the mapping space in simplicial abelian groups, where $x_i\in I$ and $I$ is the skeleton of the category of finite sets and injective maps.
Consider the map 
$$V.(\ZAp, B.( \Ap ,  A ,  \Ap )) \longrightarrow^{\!\!\!\!\!\!\!\!\!\!\!\Sigma_V} { \bf V}.( \Ap , B.( \Ap ,  A , \Ap )) $$
from \cite[Equation (3.11)]{LM00}, which comes from including  the linear case of $x_i=\emptyset$ for every $i$ into the limit.  As explained above  \cite[Equation (3.7)]{LM00},
 ${ \bf V}.(R; M)$  is a model for $\THH(R;M)$.  By a comparison to a double bar construction from Lemma 3.2 there,  \cite{LM00} also shows that  $ { \bf V}.( \Ap , B.( \Ap ,  A , \Ap ))$ is a model for $\THH( A ,  \Ap )$.  
Filtering by the  simplicial degree in $V$, $\Sigma_V$ induces a map of  $E^2$ spectral sequences
$$ \HH_*(\ZAp, \Tor_*^ A  ( \Ap ,  \Ap ))  \longrightarrow^{\!\!\!\!\!\!\!\!\!\!\!\Sigma_V} 
\THH_*( \Ap , \Tor_*^ A  ( \Ap ,  \Ap ))$$ 
where the target is exactly the Brun spectral sequence we are working with. To calculate $d^2$ on elements in $E^2_{i,0}$ in the Brun spectral sequence, we will find elements which map to them via  $\Sigma_V$ and calculate $d^2$ there.  When we look at $V_r (\ZAp, B_s( \Ap ,  A ,  \Ap ))$ as a double complex, we will call its horizontal (in the $r$-direction) differential $d_{\HH}=\sum_{i=0}^n (-1)^i d_{i,\HH}$ and its vertical (in the $s$-direction) differential $d_{\mathrm{bar}}$.

When Larsen showed that $\HH_*(A)$ can be calculated using the small complex (\ref{small}),
he used maps $i_*$  (see the proof of  \cite[Proposition 3.8]{L95}) to map this complex into the standard Hochschild complex, and then showed that the reduced Hochschild complex splits as a direct sum of the image of the $i_*$ and an acyclic summand.
These maps are given for any $m\in T$ by
\begin{align*}
& i  _{2k} ( m)  
 = 
\sum_{i_1, i_2,\ldots , i_k=1}^{nd}  
m x ^{i_1 + \cdots + i_k - k }
\otimes x \otimes  x^{nd-i_1}\otimes \cdots
\otimes  x \otimes x^{nd-i_k}, \\
& i  _{2k+1} (m)  = 
\sum_{i_1, i_2,\ldots , i_k=1}^{nd}  
m x ^{n+i_1 + \cdots + i_k -(k+1) }
\otimes x \otimes  x^{nd-i_1}\otimes \cdots
\otimes  x \otimes  x^{nd-i_k}\otimes  x.
\end{align*}
We want to use Larsen's $i_*$ in order to identify generators of $\HH_*(\Ap)$.  The reduction mod $p$ of his $i_*$ gives a chain of quasi-isomorphisms from the reduction mod $p$ of the small complex above to the reduction mod $p$ of the Hochschild complex.  This is because as explained above, the reduced Hochschild complex is a direct sum of the isomorphic image of $i_*$ and an acyclic complex, and the map from the standard Hochschild complex to the reduced Hochschild complex is a quasi-isomorphism.  After reducing mod $p$, the acyclic complex remains acyclic, and the map between the standard and the reduced Hochschild complexes remains a quasi-isomorphism.  But rather than mapping  into 
$ A ^ {\otimes (k+1)}$, as his $i_k$ does, we will define maps
$$\tilde i_k:\ T\to  \Ap \blb  \Ap  \brb \otimes \mathbb{Z}( \Ap )^{\otimes k}\in V_k(\Ap(\Ap), B_0( \Ap ,  A ,  \Ap ))$$
that reduce (using the 
maps $\mathbb{Z}( \Ap )\to \Ap $ and $ \Ap \blb  \Ap \brb \to  \Ap $)
 to the same elements that Larsen's $i_k$ do in $\bigl( \Ap \bigr) ^ {\otimes (k+1)}$ and therefore can be used to represent all of $\HH_*( \Ap )$.  
 Looking at the reduction modulo $p$ of Larsen's small complex (\ref{small}), we see that to
 represent all  the elements in  $\HH_i( \Ap )$, $i\geq 0$, it will be enough to look at $i_0(m)$ for all $m\in T$, $i_{2k}(m)$ for all $m$ such that $\bar m\in \ker ({\overline{P'(\pi)}} \bar \Tr)$ for all $k>0$, and $i_{2k+1}(m)$ for all $m$ such that $\bar m\in \ker ( \bar\pi(1-\bar\sigma^{-1}))$ for all $k\geq 0$.


We start with the odd-dimensional case.  For $k\geq 1$, let
\begin{align*}
\tilde i & _{2k+1} (m)  \\
& = 
\sum_{i_1, i_2, \ldots, i_k=1}^{nd} \bar m \left( \bar x ^{n+i_1+i_2+\cdots + i_k -\left(k+1\right) }\right)
\otimes \left(\bar x\right) \otimes \left(\bar x^{nd-i_1}\right)\otimes \cdots
 \otimes \left(\bar x^{nd-i_k}\right)\otimes \left(\bar x\right).
\end{align*}
Then  $(d_{0,HH}-d_{1,HH})(\tilde i_{2k+1}(m))=0$ because the $i_1=a$ term in $d_{0,HH}$ is cancelled by the  $i_1=a+1$ in $d_{1,HH}$, except when $i_1=1$ and when $i_1=nd$.  When  $i_1=nd$,  $ \bar x ^{n+i_1+i_2+\cdots + i_k -\left(k+1\right)}  =0$  because $n\geq 1$ and so $n+nd+i_2+\cdots + i_k -\left(k+1\right) + 1\geq  nd$, and $\bar x^{nd}=0$.  When $i_1=1$,  $\bar x^{1+nd-1}=0$ . 

We also get that $(d_{2,HH}-d_{3,HH})(\tilde i_{2k+1}(m))=0$, since
\begin{align*}
&(d_{2,HH}-d_{3,HH})(\tilde i_{2k+1}(m))  \\
& = \sum_{i_1, i_2, \ldots, i_k=1}^{nd} \bar m \left(\bar x^{n+i_1+i_2+\cdots + i_k-(k+1)}\right)\otimes \left(\bar x\right)\otimes \left(\bar x^{nd-i_1+1}\right) \otimes \left(\bar x^{nd-i_2}\right)\otimes \cdots \otimes \left(\bar x\right)\\
& -  \sum_{i_1, i_2, \ldots, i_k=1}^{nd} \bar m \left(\bar x^{n+i_1+i_2+\cdots + i_k-(k+1)}\right)\otimes \left(\bar x\right)\otimes \left(\bar x^{nd-i_1}\right) \otimes \left(\bar x^{nd-i_2+1}\right)\otimes \cdots \otimes \left(\bar x\right)
\end{align*}
and the $(i_1,i_2)=(a,b)$ in the first sum cancels the $(i_1,i_2)=(a-1,b+1)$ in the second sum. The terms which are left are when $i_1=1$ or $i_2=nd$ in the first sum, and when $i_1=nd$ or $i_2=1$ in the second sum. These terms are zero because $\bar x^{nd}=0$: if $i_1=1$ for the first sum or $i_2=1$  in the second, this is obvious.  If $i_2=nd$ in the first sum, then $n+i_1+i_2+\cdots + i_k-(k+1) \geq nd$ if $i_1\geq 2$, but if $i_1=1$ then we already know we get zero in the first tensor coordinate, and similarly for $i_1=nd$ in the second sum. 

 By the same argument, 
$$(d_{4,HH}-d_{5,HH})(\tilde i_{2k+1}(m)) = \cdots = (d_{2k-2,HH}-d_{2k-1,HH})(\tilde i_{2k+1}(m))=0.$$
So
\begin{align*}
&d_{HH}\left(\tilde i_{2k+1}(m)\right) = d_{2k,HH}(\tilde i_{2k+1}(m))-d_{2k+1,HH}(\tilde i_{2k+1}(m))\\
& = \sum_{i_1, i_2, \ldots, i_k=1}^{nd} \bar m \left( \bar x^{n+i_1+i_2+\cdots + i_k-(k+1)}\right)\otimes \left(\bar x\right)\otimes \cdots \otimes \left(\bar x\right)\otimes \left(\bar x^{nd-i_k+1}\right)\\
& -  \sum_{i_1, i_2, \ldots, i_k=1}^{nd} \bar x\bar m \left( \bar x^{n+i_1+i_2+\cdots + i_k-(k+1)}\right)\otimes \left(\bar x\right)\otimes \cdots \otimes \left(\bar x\right)\otimes \left(\bar x^{nd-i_k}\right)\\
& = \sum_{i_1, i_2, \ldots, i_k=1}^{nd} \bar m \left( \bar x^{n+i_1+i_2+\cdots + i_k-(k+1)}\right)\otimes \left(\bar x\right)\otimes  \cdots \otimes \left(\bar x\right)\otimes \left(\bar x^{nd-i_k+1}\right)\\
& - \!\!\! \sum_{i_1, i_2, \ldots, i_k=1}^{nd} \!\!\!\sigma^{-1}\left(\bar m\right)\bar x \left( \bar x^{n+i_1+i_2+\cdots + i_k-(k+1)}\right)\otimes \left(\bar x\right)\otimes \cdots \otimes \left(\bar x\right)\otimes \left(\bar x^{nd-i_k}\right)\\
& =\!\!\! \sum_{i_1, i_2, \ldots, i_{k-1}=1}^{nd}\sum_{i_k=1}^{nd} \left(\bar m\left( \bar x^{n+i_1+i_2+\cdots + i_k-k}\right)-  \sigma^{-1}\left(\bar m\right)\bar x \left( \bar x^{n+i_1+i_2+\cdots + i_k-(k+1)}\right)\right) \otimes
\\& \qquad\qquad\qquad\qquad\qquad\qquad\qquad\qquad
 \left(\bar x\right)\otimes  \cdots \otimes \left(\bar x\right)\otimes \left(\bar x^{nd-i_k}\right)\\
& -  \sum_{i_1, i_2, \ldots, i_{k-1}=1}^{nd} \bar m \left( \cancel{\bar x^{n+nd + i_1+i_2+\cdots i_{k-1}-k}}\right)\otimes \left(\bar x\right)\otimes  \cdots \otimes \left(\bar x\right)\otimes \left(1\right)\\
& +\sum_{i_1, i_2, \ldots, i_{k-1}=1}^{nd}  \bar m \left( \bar x^{n+i_1+i_2+\cdots + i_{k-1}-(k+1)}\right)\otimes \left(\bar x\right)\otimes  \cdots \otimes \left(\bar x\right)\otimes \left(\cancel{\bar x^{nd}}\right)\\
& =\!\!\!  \sum_{i_1, i_2, \ldots, i_{k-1}=1}^{nd}\sum_{i_k=1}^{nd} \left(\bar m\left( \bar x^{n+i_1+i_2+\cdots + i_k-k}\right)-  \sigma^{-1}\left(\bar m\right)\bar x \left( \bar x^{n+i_1+i_2+\cdots + i_k-(k+1)}\right)\right) \otimes \\& \qquad\qquad\qquad\qquad\qquad\qquad\qquad\qquad
\left(\bar x\right)\otimes \cdots \otimes \left(\bar x\right)\otimes \left(\bar x^{nd-i_k}\right)
\\
& = d_{\mathrm{bar}}\Bigl(\!\sum_{i_1, i_2, \ldots, i_{k-1}=1}^{nd}\sum_{i_k=1}^{nd} 
 \\& \qquad\qquad\qquad
 \left( -\bar m \left(x^n\left(\bar x^{i_1+i_2+\cdots+i_k-k}\right)\right)  
+ \sigma^{-1}\left( \bar m\right)\bar x\left(x^{n-1}\left(\bar x^{i_1+i_2+\cdots+i_k-k}\right)\right)\right)
\\& \qquad\qquad\qquad\qquad\qquad\qquad\qquad\qquad
\otimes\left(\bar x\right)\otimes \cdots \otimes \left(\bar x\right)\otimes \left(\bar x^{nd-i_k}\right)\Bigr)
\end{align*}
because $\left(\bar m - \sigma^{-1}\left(\bar m\right)\right)\bar x^n = \left(\bar m - \sigma^{-1}\left(\bar m\right)\right)\bar \pi = 0$.
This tells us that in our  spectral sequence, $d^2\left(\tilde i _{2k+1} \left( m\right) \right)$ is equal to the
class in $E^2$ of 
\begin{align*}
&d_{HH}\Bigl(\!\sum_{i_1, i_2, \ldots, i_{k-1}=1}^{nd}\sum_{i_k=1}^{nd} 
 \\& \qquad\qquad\qquad
 \left( -\bar m \left(x^n\left(\bar x^{i_1+i_2+\cdots+i_k-k}\right)\right)  
+ \sigma^{-1}\left( \bar m\right)\bar x\left(x^{n-1}\left(\bar x^{i_1+i_2+\cdots+i_k-k}\right)\right)\right)
\\& \qquad\qquad\qquad\qquad\qquad\qquad\qquad\qquad
\otimes \left(\bar x\right)\otimes 
 \left(\bar x^{nd-i_1}\right)\otimes \cdots \otimes \left(\bar x\right)\otimes \left(\bar x^{nd-i_k}\right)\Bigr)
\end{align*}

Note that on this element, $$d_{0,HH}-d_{1,HH}=d_{2,HH}-d_{3,HH}= \cdots= d_{2k-2,HH}-d_{2k-1,HH}=0$$ as before, leaving only $d_{2k, HH}$.  So 
\begin{align*}
 &d^2\left(\tilde i _{2k+1} \left( m\right) \right) \\
 &= \Bigl[\sum_{i_1, i_2, \ldots, i_{k-1}=1}^{nd}\sum_{i_k=1}^{nd} 
  \\& 
  \bigl(-\bar x^{nd-i_k}\bar m \left(x^n\left(\bar x^{i_1+i_2+\cdots+i_k-k}\right)\right)
    +\bar x^{nd-i_k}\sigma^{-1}\left(\bar m\right)\bar x\left(x^{n-1}\left(\bar x^{i_1+i_2+\cdots+i_k-k}\right)\right)\bigr)\\
  & \qquad  \qquad \qquad \qquad \qquad \qquad\qquad \qquad \qquad \qquad
  \otimes \left(\bar x\right)
  \otimes \left(\bar 
 x^{nd-i_1}\right)\otimes \cdots \otimes \left(\bar x\right)\Bigr]
\end{align*}
Analyzing the $0$'th coordinate of this for fixed $i_1, i_2,\ldots, i_{k-1}$, we get
\begin{align*}
&\sum_{i_k=1}^{nd} \!\!\left(-\bar x^{nd-i_k}\bar m \left(x^n\left(\bar x^{i_1+i_2+\cdots+i_k-k}\right)\right)+\bar x^{nd-i_k}\sigma^{-1}\left(\bar m\right)\bar x\left(x^{n-1}\left(\bar x^{i_1+i_2+\cdots+i_k-k}\right)\right)\right)\\
&=\sum_{i_k=1}^{nd} \left(-\bar x^{nd-i_k}\bar m \left(x^n\left(\bar x^{i_1+i_2+\cdots+i_k-k}\right)\right)+\bar x^{nd-i_k+1}\bar m \left(x^{n-1}\left(\bar x^{i_1+i_2+\cdots+i_k-k}\right)\right)\right)\\
& = \sum_{i_k=1}^{nd} \left(-\bar x^{nd-i_k}\bar m \left(x^n\left(\bar x^{i_1+i_2+\cdots+i_k-k}\right)\right)+\bar x^{nd-i_k}\bar m \left(x^{n-1}\left(\bar x^{i_1+i_2+\cdots+i_k+1-k}\right)\right)\right)\\
&- \bar m \left(x^{n-1}\left(\bar x^{nd+i_1+i_2+\cdots+i_{k-1}+1-k}\right)\right) + \bar x^{nd}\bar m\left(x^{n-1}\left(\bar x^{i_1+i_2+\cdots +i_{k-1}+1-k}\right)\right) \\
& = \sum_{i_k=1}^{nd} \left(-\bar x^{nd-i_k}\bar m \left(x^n\left(\bar x^{i_1+i_2+\cdots+i_k-k}\right)\right)+\bar x^{nd-i_k}\bar m \left(x^{n-1}\left(\bar x^{i_1+i_2+\cdots+i_k+1-k}\right)\right)\right).
\end{align*}
Since
\begin{align*}
& d_{\mathrm{bar}}\left(\bar x^{nd-i_k}\bar m\left(x^{n-1}\left(x\left(\bar x^{i_1+i_2+\cdots+i_k-k}\right)\right)\right)\right)\\
 & \qquad = \bar x^{nd-i_k}\bar m\bar x^{n-1}\left( x\left(\bar x^{i_1+i_2+\cdots+i_{k}-k}\right)\right)
 - \bar x^{nd-i_k}\bar m\left( x^n\left(\bar x^{i_1+i_2+\cdots+i_{k}-k}\right)\right)\\
 &\qquad\qquad\qquad\qquad\qquad\qquad\qquad\qquad
 + \bar x^{nd-i_k}\bar m\left( x^{n-1}\left(\bar x^{i_1+i_2+\cdots+i_{k}+1-k}\right)\right),
\end{align*}
this is homologous to $ \sum_{i_k=1}^{nd}-\bar x^{nd-i_k}\bar m\bar x^{n-1}\left(x\left(\bar x^{i_1+i_2+\cdots +i_k-k}\right)\right)$, and since
\begin{align*}
& d_{\mathrm{bar}}\left(\bar x^{nd-i_k}\bar m\bar x^{n-1}\left(x\left(x^{i_k-1}\left(\bar x^{i_1+i_2+\cdots +i_{k-1}-\left(k-1\right)}\right)\right)\right)\right)\\
&\qquad = \bar x^{nd-i_k}\bar m\bar x^{n}\left(x^{i_k-1}\left(\bar x^{i_1+i_2+\cdots +i_{k-1}-\left(k-1\right)}\right)\right)\\
&\qquad
 - \bar x^{nd-i_k}\bar m\bar x^{n-1}\left(x^{i_k}\left(\bar x^{i_1+i_2+\cdots +i_{k-1}-\left(k-1\right)}\right)\right)\\
&\qquad
 + \bar x^{nd-i_k}\bar m\bar x^{n-1}\left(x\left(\bar x^{i_1+i_2+\cdots +i_{k-1}+i_k-k}\right)\right),
\end{align*}
that is homologous to
\begin{align*}
&\sum_{i_k=1}^{nd} \Bigl(\bar x^{nd-i_k}\bar m\bar x^{n}\left(x^{i_k-1}\left(\bar x^{i_1+i_2+\cdots +i_{k-1}-\left(k-1\right)}\right)\right) \\
&\qquad\qquad-\bar x^{nd-i_k}\bar m\bar x^{n-1}\left(x^{i_k}\left(\bar x^{i_1+i_2+\cdots +i_{k-1}-\left(k-1\right)}\right)\right)\Bigr)\\
& ={\bar x^{nd-1}\bar m\bar x^{n}}\left(1\left(\bar x^{i_1+i_2+\cdots+i_{k-1}-\left(k-1\right)}\right)\right) - \bar m \bar x^{n-1} \left(x^{nd}\left(\bar x^{i_1+i_2+\cdots+ i_{k-1}-\left(k-1\right)}\right)\right)\\
&\qquad\qquad + \sum_{i_k=2}^{nd} \Bigl(\bar x^{nd-i_k}\bar m\bar x^{n}\left(x^{i_k-1}\left(\bar x^{i_1+i_2+\cdots +i_{k-1}-\left(k-1\right)}\right)\right) \\
&\qquad\qquad\qquad -\bar x^{nd- i_k}\sigma^{-1}( \bar m)\bar x^{n}\left(x^{i_k-1}\left(\bar x^{i_1+i_2+\cdots +i_{k-1}-\left(k-1\right)}\right)\right)\Bigr)\\
& = -\bar m \bar x^{n-1} \left(x^{nd}\left(\bar x^{i_1+i_2+\cdots+ i_{k-1}-\left(k-1\right)}\right)\right),
\end{align*}
because $\bar x^{nd}=0$ and because $m$ was chosen to satisfy $\left(\bar m - \sigma^{-1}\left(\bar m\right)\right)\bar x^n=0$.   This tells us what $d^2\left(\tilde i _{2k+1} \left( m\right) \right) $ is; however, to get it into a more familiar form we
observe that
\begin{align*}
& d_{\mathrm{bar}}\left(\bar m\left(x^{n-1}\left(x^{nd}\left(\bar x^{i_1+i_2+\cdots+i_{k-1}-\left(k-1\right)}\right)\right)\right)\right)\\
& = \bar m \bar x^{n-1} \left(x^{nd}\left(\bar x^{i_1+i_2+\cdots+ i_{k-1}-\left(k-1\right)}\right)\right) - \bar m \left(x^{nd+n-1}\left(\bar x^{i_1+i_2+\cdots+ i_{k-1}-\left(k-1\right)}\right)\right)\\
& + \bar m \left(x^{n-1}\left(\cancel{\bar x^{nd+i_1+i_2+\cdots+i_{k-1}-\left(k-1\right)}}\right)\right)\\
& = \bar m \bar x^{n-1} \left(x^{nd}\left(\bar x^{i_1+i_2+\cdots+ i_{k-1}-\left(k-1\right)}\right)\right) - \bar m \left(x^{nd+n-1}\left(\bar x^{i_1+i_2+\cdots+ i_{k-1}-\left(k-1\right)}\right)\right)
\end{align*}
and
\begin{align*}
& d_{\mathrm{bar}}\left(\bar m\left(x^{nd}\left(x^{n-1}\left(\bar x^{i_1+i_2+\cdots+i_{k-1}-\left(k-1\right)}\right)\right)\right)\right)\\
& = \bar m \cancel{\bar x^{nd}} \left(x^{n-1}\left(\bar x^{i_1+i_2+\cdots+ i_{k-1}-\left(k-1\right)}\right)\right)\\
&  - \bar m \left(x^{nd+n-1}\left(\bar x^{i_1+i_2+\cdots+ i_{k-1}-\left(k-1\right)}\right)\right) + \bar m \left(x^{nd}\left(\bar x^{n+i_1+i_2+\cdots+i_{k-1}-k}\right)\right)\\
& = -\bar m \left(x^{nd+n-1}\left(\bar x^{i_1+i_2+\cdots+ i_{k-1}-\left(k-1\right)}\right)\right) + \bar m \left(x^{nd}\left(\bar x^{n+i_1+i_2+\cdots+i_{k-1}-k}\right)\right).
\end{align*}
So 
\begin{align*}
 &d^2\left(\tilde i _{2k+1} \left( m\right) \right) \\
&\!=\!\! \left[\sum_{i_1, i_2, \ldots, i_{k-1}=1}^{nd}\sum_{i_k=1}^{nd} -\bar x^{nd-i_k}\bar m\bar x^{n-1}\left(x\left(\bar x^{i_1+i_2+\cdots+i_{k}-k}\right)\right)
\otimes \!\cdots \!\otimes \left(\bar x^{nd-i_{k-1}}\right)\otimes \left(\bar x\right)\right]\\
&\!=\!\! \left[\sum_{i_1, i_2, \ldots, i_{k-1}=1}^{nd}-\bar m\bar x^{n-1}\left(x^{nd}\left(\bar x^{i_1+i_2+\cdots+i_{k-1}-\left(k-1\right)}\right)\right)
 \otimes \cdots \otimes \left(\bar x^{nd-i_{k-1}}\right)\otimes \left(\bar x\right)\right]\\
&\!=\!\! \left[\sum_{i_1, i_2, \ldots, i_{k-1}=1}^{nd}- \bar m\left(x^{nd}\left(\bar x^{n+i_1+i_2+\cdots+i_{k-1}-k}\right)\right)\otimes \left(\bar x\right)
 \otimes \cdots \otimes \left(\bar x^{nd-i_{k-1}}\right)\otimes \left(\bar x\right)\right]
\end{align*}
Because of Lemma \ref{alsoneeded}, this says that that 
$$d^2(\tilde i _{2k+1} ( m) )= -\tau \cdot [ \overline{ i_{2k-1} ( m)}] \in \tau \cdot \HH_{2k-1} (\Ap ),$$
where we use $\overline{ i_{2k-1} ( m)}$ to denote the reduction mod $p$ of Larsen's $ i_{2k-1} (m)$.

\medskip
In the even case, we look at 
\begin{align*}
\tilde i & _{2k} ( m)  
& = 
\sum_{i_1, i_2, \ldots, i_k=1}^{nd} 
\bar m \left( \bar x ^{i_1+i_2+\cdots + i_k - k }\right)
\otimes \left(\bar x\right) \otimes \left(\bar x^{nd-i_1}\right)\otimes \cdots
\otimes \left(\bar x\right) \otimes \left(\bar x^{nd-i_k}\right) 
\end{align*}
for $m\in T$ for which $\bar m\in \ker ({\overline{P'(\pi)}} \bar \Tr)$.  
By an argument similar to the one we had in the odd case, $(d_{0,HH}-d_{1,HH})(\tilde i_{2k}(m))=0$.   Also by a similar argument to the odd case,
$$(d_{2,HH}-d_{3,HH})(\tilde i_{2k}(m))= \cdots = (d_{2k-2,HH}-d_{2k-1,HH})(\tilde i_{2k+1}(m))=0,$$
again using cancellations and the fact that $\bar x^{nd}=0$.  So
\begin{align*}  
& d_{\HH}(\tilde i  _{2k} ( m)  ) = d_{2k,\HH}(\tilde i  _{2k} ( m)  )\\
 & = 
\sum_{i_1, i_2, \ldots, i_k=1}^{nd}
\bar x^{nd-i_k}  \bar m \left(\bar x ^{i_1+i_2+\cdots + i_k - k }\right)
\otimes \left(\bar x\right) \otimes \left(\bar x^{nd-i_1}\right)\otimes \cdots
\otimes \left(\bar x\right) \\
& = 
\sum_{i_1, i_2, \ldots, i_k =1}^{nd}  
 \sigma^{i_k - nd} \left(\bar m\right)  \bar x^{nd-i_k}  \left( \bar x ^{i_1+i_2+\cdots + i_k - k }\right)
\otimes \left(\bar x\right) \otimes \left(\bar x^{nd-i_1}\right)  \otimes \cdots
\otimes \left(\bar x\right) \\
& = 
\sum_{i_1, i_2, \ldots, i_k =1}^{nd}  
 \sigma^{i_k } \left(\bar m\right)  \bar x^{nd-i_k}  \left( \bar x ^{i_1+i_2+\cdots + i_k - k }\right)
\otimes \left(\bar x\right) \otimes \left(\bar x^{nd-i_1}\right)  \otimes \cdots
\otimes \left(\bar x\right) .
\end{align*}

By the definition of $d _{\mathrm{bar}} $,
\begin{align*}
d & _{\mathrm{bar}}  \left(
\sum_{i_1, i_2, \ldots, i_k=1}^{nd} 
 \sigma^{i_k } \left(\bar m\right)  \left(  x^{nd-i_k}  \left( \bar x ^{i_1+i_2+\cdots + i_k - k }\right)\right)
\otimes \left(\bar x\right) \otimes \left(\bar x^{nd-i_1}\right)  \otimes \cdots\otimes \left(\bar x\right)   \right)\\
& = 
\sum_{i_1, i_2, \ldots, i_k=1}^{nd} 
 \sigma^{i_k } \left(\bar m\right)  \bar x^{nd-i_k}  \left( \bar x ^{i_1+i_2+\cdots + i_k - k }\right)
\otimes \left(\bar x\right) \otimes \left(\bar x^{nd-i_1}\right)  \otimes \cdots\otimes \left(\bar x\right) \\
&\  -
\!\!\!\sum_{i_1, i_2, \ldots, i_{k-1}=1}^{nd}  
\!\left( \sum_{i_k=1}^{nd} 
 \sigma^{i_k} \left(\bar m\right)   \left( \bar x ^ { nd + i_1+i_2+\cdots +  i_{k-1}  - k } \right)
\otimes \left(\bar x\right) \otimes \left(\bar x^{nd-i_1}\right)  \otimes \cdots \otimes \left(\bar x\right)
\! \right)  \\ 
 & = d_{\HH}(\tilde i  _{2k} ( m)  ) - \!\!\!\!\!\!\!\!\!\!\!\!\sum_{i_1, i_2, \ldots, i_{k-1}=1}^{nd}  
\!\!\!\!\!\!\!\!\!\!\!\! d\Tr\left(\bar m\right)   \left( \bar x ^ { nd + i_1+i_2+\cdots +  i_{k-1}  - k } \right) \!\otimes \!\left(\bar x\right)\!\otimes \!  \cdots \!\otimes \! \left(\bar x^{nd-i_{k-1}}\right) \!\otimes \! \left(\bar x\right)\\
 & = d_{\HH}(\tilde i  _{2k} ( m)  ) - 
d\Tr\left(\bar m\right)   \left( \bar x ^ { nd -1 } \right) \otimes \left(\bar x\right)  \cdots \otimes \left(\bar x^{nd-1}\right)
\otimes \left(\bar x\right)\\
& = d_{\HH}(\tilde i  _{2k} ( m)  ) +  d_{\mathrm{bar}}\left(d\Tr\left(\bar m\right)\left(x^{nd-n}\left(\bar x^{n-1}\right)\right)\otimes \left(\bar x\right)\otimes\cdots\otimes \left(\bar x^{nd-1}\right)\otimes\left(\bar x\right)\right)
 \end{align*}
where the last equality is because $d\Tr\left(\bar m\right) \bar x^{nd-n} = P'\left(\pi\right)\Tr\left(\bar m\right)=0$.
 So
\begin{align*}
& d ^2(\tilde i  _{2k} ( m) ) \\
 &= \Bigl[d_{\HH} \left( \sum_{i_1, i_2, \ldots , i_k=1}^{nd} 
 \sigma^{i_k } \left(\bar m\right)  \left(  x^{nd-i_k}  \left( \bar x ^{i_1+i_2+\cdots + i_k - k }\right)\right)
 \otimes \cdots \otimes\left(\bar x^{nd-i_{k-1}}\right)
\otimes \left(\bar x\right)  \right)\\
& \qquad- 
 d_{\HH}\left(d\Tr\left(\bar m\right)\left(x^{nd-n}\left(\bar x^{n-1}\right)\right)\otimes \left(\bar x\right)\otimes\cdots\otimes \left(\bar x^{nd-1}\right)\otimes\left(\bar x\right)\right)\Bigr].
 \end{align*}
The second $d_{\HH}$ has most summands canceling because $\bar x^{nd}=0$, and we are left with
\begin{align*}
& d_{\HH}\left(d\Tr\left(\bar m\right)\left(x^{nd-n}\left(\bar x^{n-1}\right)\right)\otimes \left(\bar x\right)\otimes\cdots\otimes \left(\bar x^{nd-1}\right)\otimes\left(\bar x\right)\right)\\
& = d\Tr\left(\bar m\right)\left(x^{nd-n}\left(\bar x^n\right)\right)\otimes \left(\bar x^{nd-1}\right)\otimes \cdots \otimes\left(\bar x^{nd-1}\right)\otimes \left(\bar x\right)\\
&\quad   -\bar x d\Tr\left(\bar m\right)\left(x^{nd-n}\left(\bar x^{n-1}\right)\right)\otimes \left(\bar x\right)\otimes\cdots\otimes \left(\bar x^{nd-1}\right).
\end{align*}
In $d_{\HH} \left( \sum_{i_1, i_2, \ldots , i_k=1}^{nd} 
 \sigma^{i_k } \left(\bar m\right)  \left(  x^{nd-i_k}  \left( \bar x ^{i_1+i_2+\cdots + i_k - k }\right)\right)
 \otimes \cdots \otimes\left(\bar x^{nd-i_{k-1}}\right)
\otimes \left(\bar x\right)  \right)$, the  cancellations $d_{0,\HH}- d_{1,\HH} =\cdots =d_{2k-4,\HH}- d_{2k-3,\HH} =0$ hold just as they did for  $d_{\HH}( \tilde i _{2k} \left( m\right))$, so we only need to study $d_{2k-2,\HH}- d_{2k-1,\HH}$:
\begin{align*}
 &d  _{\HH} \left( \sum_{i_1, i_2,\cdots, i_k=1}^{nd}
 \sigma^{i_k } (\bar m)  \blb  x^{n-i_k}  \blb \bar x ^{i_1+i_2+\cdots + i_k - k }\brb\brb
\!\otimes\!  \left(\bar x\right)  \!\otimes\!  \cdots \!\otimes\! \left(\bar x^{nd-i_{k-1}}\right)
\!\otimes\!  \left(\bar x\right)  \right)\\ 
& =\sum_{i_1, i_2,\cdots, i_k=1}^{nd}
 \sigma^{i_k } \left(\bar m\right)  \blb  x^{nd-i_k}  \blb \bar x ^{i_1+i_2+\cdots + i_k - k }\brb\brb
\!\otimes\!  \left(\bar x\right) \!\otimes\!  \cdots
\!\otimes\!  \left(\bar x\right) \!\otimes\!  \left(\bar x^{nd-i_{k-1} + 1}\right) \\
& - \sum_{i_1, i_2,\cdots, i_k=1}^{nd}
\bar x \sigma^{i_k } \left(\bar m\right)  \blb  x^{nd-i_k}  \blb \bar x ^{i_1+i_2+\cdots + i_k - k }\brb\brb
\!\otimes\!  \left(\bar x\right) \!\otimes\!  \cdots
\!\otimes\!  \left(\bar x\right) \!\otimes\!  \left(\bar x^{nd-i_{k-1}}\right)\\
& =\sum_{i_1, i_2,\cdots, i_k=1}^{nd}
 \sigma^{i_k } \left(\bar m\right)  \blb  x^{nd-i_k}  \blb \bar x ^{i_1+i_2+\cdots + i_k - k+1 }\brb\brb
\!\otimes\!  \left(\bar x\right) \!\otimes\!  \cdots
\!\otimes\!  \left(\bar x\right) \!\otimes\!  \left(\bar x^{nd-i_{k-1} }\right) \\
& - \sum_{i_1, i_2,\cdots, i_k=1}^{nd}
\sigma^{i_k -1} \left(\bar m\right)\bar x  \blb  x^{nd-i_k}  \blb \bar x ^{i_1+i_2+\cdots + i_k - k }\brb\brb
\!\otimes\!  \left(\bar x\right) \!\otimes\!  \cdots
\!\otimes\!  \left(\bar x\right) \!\otimes\!  \left(\bar x^{nd-i_{k-1}}\right)\\
& =\sum_{i_1, i_2,\cdots, i_k=1}^{nd}
 \sigma^{i_k -1} \left(\bar m\right)  \blb  x^{nd-i_k+1}  \blb \bar x ^{i_1+i_2+\cdots + i_k - k }\brb\brb
\!\otimes\!  \left(\bar x\right) \!\otimes\!  \cdots
\!\otimes\!  \left(\bar x\right) \!\otimes\!  \left(\bar x^{nd-i_{k-1} }\right) \\
& - \sum_{i_1,i_2, \cdots, i_{k-1}=1}^{nd}\bar m\left(x^{nd}\left(\bar x^{i_1+i_2+\cdots+i_{k-1}-\left(k-1\right)}\right)\right)\!\otimes\!  \left(\bar x\right) \!\otimes\!  \cdots
\!\otimes\!  \left(\bar x\right) \!\otimes\!  \left(\bar x^{nd-i_{k-1}}\right)\\
& - \sum_{i_1, i_2,\cdots, i_k=1}^{nd}
\sigma^{i_k -1} \left(\bar m\right)\bar x  \blb  x^{nd-i_k}  \blb \bar x ^{i_1+i_2+\cdots + i_k - k }\brb\brb
\!\otimes\!  \left(\bar x\right) \!\otimes\!  \cdots
\!\otimes\!  \left(\bar x\right) \!\otimes\!  \left(\bar x^{nd-i_{k-1}}\right),
\end{align*}
where the second equality involves shifting $i_{k-1}$ by $1$ in the first sum.  That does not change the value of the sum because the extra term and the term we lose are both zero because  $\bar x^{nd}=0$.  The third equality involves shifting $i_k$ by $1$ in the first sum, which requires the correction term below it for the extra term.
Observe that
\begin{align*}
& d_{\mathrm{bar}}\left(\sigma^{i_k-1}\left(\bar m\right)\left(x\left(x^{nd-i_k}\left(\bar x^{i_1+i_2+\cdots +i_k-k}\right)\right)\right)\right)\\
& \ = \sigma^{i_k-1}\left(\bar m\right)\bar x\left(x^{nd-i_k}\left(\bar x^{i_1+i_2+\cdots+i_k-k}\right)\right)- \sigma^{i_k-1}\left(\bar m\right)\left(x^{nd-i_k+1}\left(\bar x^{i_1+i_2+\cdots+i_k-k}\right)\right)\\
& \ + \sigma^{i_k-1}\left(\bar m\right)\left(x\left(\bar x^{nd+i_1+i_2+\cdots+i_{k-1}-k}\right)\right),\\
\end{align*}
so 
\begin{align*}
& \sum_{i_1,i_2, \cdots, i_{k}=1}^{nd} \sigma^{i_k-1}\left(\bar m\right)\left(x\left(\bar x^{nd+i_1+i_2+\cdots+i_{k-1}-k}\right)\right)\otimes \left(\bar x\right) \otimes \cdots
\otimes \left(\bar x\right) \otimes \left(\bar x^{nd-i_{k-1}}\right) \\
& - \sum_{i_1,i_2, \cdots, i_{k-1}=1}^{nd}\bar m\left(x^{nd}\left(\bar x^{i_1+i_2+\cdots+i_{k-1}-\left(k-1\right)}\right)\right)\otimes \left(\bar x\right) \otimes \cdots
\otimes \left(\bar x\right) \otimes \left(\bar x^{nd-i_{k-1}}\right)\\
& = d\Tr\left(\bar m\right)\left(x\left(\bar x^{nd-1}\right)\right)\otimes \left(\bar x\right) \otimes \cdots
\otimes \left(\bar x\right) \otimes \left(\bar x^{nd-1}\right)\\
& - \sum_{i_1,i_2, \cdots, i_{k-1}=1}^{nd}\bar m\left(x^{nd}\left(\bar x^{i_1+i_2+\cdots+i_{k-1}-\left(k-1\right)}\right)\right)\otimes \left(\bar x\right) \otimes \cdots
\otimes \left(\bar x\right) \otimes \left(\bar x^{nd-i_{k-1}}\right).\\
\end{align*}
Therefore
\begin{align*}
&d^2\left(\tilde i_{2k}\left(m\right)\right)  = [d\Tr\left(\bar m\right)\left(x\left(\bar x^{nd-1}\right)\right)\otimes \left(\bar x\right) \otimes \cdots
\otimes \left(\bar x\right) \otimes \left(\bar x^{nd-1}\right)\\
& - \sum_{i_1,i_2, \cdots, i_{k-1}=1}^{nd}\bar m\left(x^{nd}\left(\bar x^{i_1+i_2+\cdots+i_{k-1}-\left(k-1\right)}\right)\right)\otimes \left(\bar x\right) \otimes \cdots
\otimes \left(\bar x\right) \otimes \left(\bar x^{nd-i_{k-1}}\right)\\
& -  d\Tr\left(\bar m\right)\left(x^{nd-n}\left(\bar x^n\right)\right)\otimes \left(\bar x^{nd-1}\right)\otimes \cdots \otimes\left(\bar x^{nd-1}\right)\otimes \left(\bar x\right)\\
&  +\bar x d\Tr\left(\bar m\right)\left(x^{nd-n}\left(\bar x^{n-1}\right)\right)\otimes \left(\bar x\right)\otimes\cdots\otimes \left(\bar x^{nd-1}\right)].\\
\end{align*}
This can be simplified using 
\begin{align*}
& d_{\mathrm{bar}}\left(d\Tr\left(\bar m\right)\left(x\left(x^{nd-n}\left(\bar x^{n-1}\right)\right)\right)\right)\\
& = d\Tr\left(\bar m\right)\bar x\left(x^{nd-n}\left(\bar x^{n-1}\right)\right) - d\Tr\left(\bar m\right)\left(x^{nd-n+1}\left(\bar x^{n-1}\right)\right) + d\Tr\left(\bar m\right)\left(x\left(\bar x^{nd-1}\right)\right)\\
& = \bar xd\Tr\left(\bar m\right)\left(x^{nd-n}\left(\bar x^{n-1}\right)\right) - d\Tr\left(\bar m\right)\left(x^{nd-n+1}\left(\bar x^{n-1}\right)\right) + d\Tr\left(\bar m\right)\left(x\left(\bar x^{nd-1}\right)\right),\\
& d_{\mathrm{bar}}\left( d\Tr\left(\bar m\right)\left(x^{nd-n+1} \left( x^{n-1}\left(\bar 1\right)\right)\right) \right)\\
&\ = 
0-d\Tr\left( \bar m\right)\left( x^{nd}\left(\bar 1\right)\right) + d\Tr\left(\bar m\right)\left(x^{nd-n+1}\left(\bar x^{n-1}\right)\right),\\ 
& d_{\mathrm{bar}}\left( d\Tr\left(\bar m\right)\left(x^{nd-n} \left( x^{n}\left(\bar 1\right)\right)\right) \right)= 
0-d\Tr\left( \bar m\right)\left( x^{nd}\left(\bar 1\right)\right) + d\Tr\left(\bar m\right)\left(x^{nd-n}\left(\bar x^{n}\right)\right),\\ 
\end{align*}
the last two equations being true by the choice of $m$.
We get
\begin{align*}
& d ^2(\tilde i  _{2k} ( m) )
= [d\Tr\left(\bar m\right)\left(x^{nd-n+1}\left(\bar x^{n-1}\right)\right)\otimes \left(\bar x\right) \otimes \cdots
\otimes \left(\bar x\right) \otimes \left(\bar x^{nd-1}\right)\\
& - \sum_{i_1,i_2, \cdots, i_{k-1}=1}^{nd}\bar m\left(x^{nd}\left(\bar x^{i_1+i_2+\cdots+i_{k-1}-\left(k-1\right)}\right)\right)\otimes \left(\bar x\right) \otimes \cdots
\otimes \left(\bar x\right) \otimes \left(\bar x^{nd-i_{k-1}}\right)\\
& -  d\Tr\left(\bar m\right)\left(x^{nd-n}\left(\bar x^n\right)\right)\otimes \left(\bar x^{nd-1}\right)\otimes \cdots \otimes\left(\bar x^{nd-1}\right)\otimes \left(\bar x\right)]\\
& = [d\Tr\left(\bar m\right)\left(x^{nd}\left(\bar 1\right)\right)\otimes \left(\bar x\right) \otimes \cdots
\otimes \left(\bar x\right) \otimes \left(\bar x^{nd-1}\right)\\
& - \sum_{i_1,i_2, \cdots, i_{k-1}=1}^{nd}\bar m\left(x^{nd}\left(\bar x^{i_1+i_2+\cdots+i_{k-1}-\left(k-1\right)}\right)\right)\otimes \left(\bar x\right) \otimes \cdots
\otimes \left(\bar x\right) \otimes \left(\bar x^{nd-i_{k-1}}\right)\\
& -  d\Tr\left(\bar m\right)\left(x^{nd}\left(\bar 1\right)\right)\otimes \left(\bar x^{nd-1}\right)\otimes \cdots \otimes\left(\bar x^{nd-1}\right)\otimes \left(\bar x\right)].
\end{align*}
Lemma \ref{alsoneeded} allows us to identify the class of this element in the $E^2$-term as 
\begin{align*}
& d ^2(\tilde i  _{2k} ( m) )
= \tau \cdot  [d\Tr\left(\bar m\right)\otimes  \bar x  \otimes \cdots
\otimes  \bar x  \otimes  \bar x^{nd-1} \\
&\qquad\qquad - \sum_{i_1,i_2, \cdots, i_{k-1}=1}^{nd}\bar m
\bar x^{i_1+i_2+\cdots+i_{k-1}-\left(k-1 \right) }\otimes  \bar x  \otimes \cdots
\otimes  \bar x  \otimes  \bar x^{nd-i_{k-1}} \\
&\qquad\qquad -  d\Tr\left(\bar m\right)\otimes  \bar x^{nd-1} \otimes \cdots \otimes \bar x^{nd-1} \otimes  \bar x ] \in \tau \cdot \HH_{2k-2} (\Ap ),
\end{align*}
where we can identify the sum as being $-\overline{ i_{2k-2} (m) }$, the reduction modulo $p$ of the image of $m$ by Larsen's map.
Moreover, Larsen's comparison map $\pi_{2k-2}$ (defined below \cite[Equation (3.8.1)]{L95})
from the Hochschild complex to his small complex (which shows that the two are quasi-isomorphic) sends
$$(\bar a \otimes \bar x  \otimes \cdots
\otimes \bar x \otimes \bar x^{nd-1} - 
\bar a \otimes \bar x^{nd-1} \otimes \cdots \otimes \bar x^{nd-1} \otimes \bar x )\mapsto  \bar a-\bar a= 0$$
for all  $a\in A$.  So we get, just as we did in the odd case, that
$$d ^2(\tilde i  _{2k} (m) )=-\tau \cdot [ \overline{ i_{2k-2} (m) }] .$$
\end{proof}

\begin{lem}\label{alsoneeded}
In the iterated bar construction $B.( \Ap , A , \Ap )$ which calculates $\Tor_*^ A  ( \Ap ,  \Ap )$, for any $\bar a\in \Ap $ and $0\leq t <n$ the chain $\bar a \blb  x^n\blb \bar x ^t\brb\brb\in B_1( \Ap , A , \Ap )$ represents the homology class $\tau\cdot \bar a \bar x^t\in \Tor_1^ A  ( \Ap ,  \Ap )=\tau\cdot  \Ap $.
\end{lem}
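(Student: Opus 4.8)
The plan is to prove Lemma \ref{alsoneeded} by replacing the bar resolution, in the relevant low degrees, by the length-one free resolution of $\Ap$ over $A$ that the Eisenstein hypothesis produces. First I would record the algebraic input. Since $P(z)=z^d+p_{d-1}z^{d-1}+\cdots+p_0$ is Eisenstein, $\pi^d=-\sum_{i<d}p_i\pi^i$ with every $p_i\in pR$, so $\pi^d=pw$ where $w=-\sum_{i<d}(p_i/p)\pi^i$ reduces modulo $\pi$ to $-p_0/p\in R^\times$ and is therefore a unit of the centre $S$; since $x^n=\pi$, the middle entry occurring in the chains of the lemma, $x^{nd}=\pi^d$, thus generates the two-sided ideal $(p)$ of $A$ and (being central) reduces to $0$ in $\Ap$. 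Consequently $0\to A\xrightarrow{\;\cdot\,x^{nd}\;}A\to\Ap\to0$ is a free resolution of $\Ap$ over $A$; tensoring it with $\Ap$ gives the two-term complex $\Ap\xrightarrow{\;\cdot\,\overline{x^{nd}}=0\;}\Ap$, whence $\Tor_1^A(\Ap,\Ap)\cong\Ap$, which is what $\tau\cdot\Ap$ denotes in the statement.

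Next I would run the standard comparison of this short resolution with the pointed bar resolution $B_\bullet(A,A,\Ap)$, both being projective resolutions of $\Ap$ over $A$, recalling that $B_\bullet(\Ap,A,\Ap)=\Ap\otimes_A B_\bullet(A,A,\Ap)$. The comparison map in degree $0$ sends $1\in A$ to $[\bar 1]\in B_0(A,A,\Ap)=A(\Ap)$, and in degree $1$ it sends the degree-$1$ generator $1\in A$ to $[\,x^{nd}([\bar 1])\,]\in B_1(A,A,\Ap)$: indeed the bar differential carries $[\,x^{nd}([\bar 1])\,]$ to $x^{nd}[\bar 1]-[\,\overline{x^{nd}}\cdot\bar 1\,]=x^{nd}[\bar 1]-[\bar 0]=x^{nd}[\bar 1]$, which is the image of $1$ under $\cdot\,x^{nd}$ followed by the degree-$0$ comparison map. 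Applying $\Ap\otimes_A(-)$, it follows that the bar $1$-cycle $1\blb x^{nd}\blb\bar 1\brb\brb\in B_1(\Ap,A,\Ap)$ represents a generator of $\Tor_1^A(\Ap,\Ap)\cong\Ap$ corresponding to $\bar 1$; this is the element the statement names $\tau$.

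Finally I would pass from $1$ and exponent $0$ to a general $\bar a$ and $t$ by using the left and right $\Ap$-module structures on $B_\bullet(\Ap,A,\Ap)$ — left multiplication into the leading $\Ap$ and right multiplication into the trailing one — each of which commutes with the bar differential. At chain level $\bar a\blb x^{nd}\blb\bar x^t\brb\brb=\bar a\cdot\bigl(1\blb x^{nd}\blb\bar 1\brb\brb\bigr)\cdot\bar x^t$, so on homology $[\,\bar a\blb x^{nd}\blb\bar x^t\brb\brb\,]=\bar a\cdot\tau\cdot\bar x^t$, which under $\Tor_1^A(\Ap,\Ap)\cong\Ap$ with $\tau\leftrightarrow1$ is precisely $\tau\cdot\bar a\bar x^t$, as asserted. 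I expect the only genuinely delicate point to be the normalisation of $\tau$: taking $\tau=[\,1\blb x^{nd}\blb\bar 1\brb\brb\,]$ — equivalently, building the short resolution from $x^{nd}$ rather than from $p$ — is exactly what keeps the unit $w$ coming from the Eisenstein polynomial out of the final formula, and this is the normalisation used implicitly throughout the proof of Proposition \ref{keylemma}. The remaining verifications — that the pointed bar complex computes $\Tor^A_*$, that the comparison map is a chain map (which uses that $A$ is a domain, so that $x^{nd}$ is a non-zero-divisor), and that the two $\Ap$-actions are as described and descend to $\Tor_1$ with $w\in S$ central in $\Ap$ — are routine.
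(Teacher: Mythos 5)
Your proof follows the same basic route as the paper's own argument---compare the bar resolution $B_\bullet(A,A,\Ap)$ with a length-one free resolution of $\Ap$ over $A$, push the degree-one generator through the comparison, and then use the two-sided $\Ap$-module structure on $B_\bullet(\Ap,A,\Ap)$ to reduce to the single class with $\bar a=\bar 1$, $t=0$---so mathematically the two proofs are close cousins rather than genuinely different approaches. That said, your version is more careful on precisely the point where the paper's written proof slips. The paper builds the short resolution $0\to A\xrightarrow{\cdot p}A\to\Ap\to 0$, defines $f_1=\tfrac1p f_0\circ(d_0-d_1)$, and then evaluates $f_1(a\blb x^n\blb\bar x^t\brb\brb)=\tfrac1p a x^n x^t$ ``since $x^n=p$.'' But from the set-up in Section 2 one has $x^n=\pi$, a uniformizer of $S$, and $\pi=p$ (up to unit) only when $d=1$. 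Moreover, when $d>1$ the intermediate term $\bar x^{\,n+t}$ in $f_0(d_0-d_1)$ is not zero in $\Ap$ for $0\le t<n$, so the paper's computation literally yields $f_1=0$ in that range; the computation only closes up once the middle entry is $x^{nd}=\pi^d\in(p)$, which is exactly how the lemma is applied in the proof of Proposition~\ref{keylemma}. You correctly make that substitution, and you also flag the resulting unit $w=\pi^d/p$: by anchoring $\tau$ to the resolution $0\to A\xrightarrow{\cdot x^{nd}}A\to\Ap\to 0$ rather than to $\cdot p$, you absorb $w$ into the choice of generator and get the clean identity $[\bar a\blb x^{nd}\blb\bar x^t\brb\brb]=\tau\cdot\bar a\bar x^t$. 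With the paper's $\cdot p$ normalization the right-hand side would read $\tau\cdot \bar w\bar a\bar x^t$ instead; since $\bar w$ is a unit of $S/(p)$ this does not affect any of the rank or kernel/cokernel computations downstream, but your normalization is the one under which the lemma's formula holds verbatim. One further economy in your write-up is the use of bimodule actions (left $\Ap$ into the first slot, right $\Ap$ into the last) to deduce the general case from the case $\bar a=\bar1$, $t=0$; the paper instead carries arbitrary $a$ and $t$ through the whole computation, which is more verbose but not more illuminating. In short: same strategy, but your version corrects the $x^n$ versus $x^{nd}$ discrepancy and is explicit about the unit that the paper's ``$x^n=p$'' sweeps under the rug.
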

\begin{proof}
The fact that  $\bar a \blb  x^n\blb \bar x ^t\brb\brb$ is a cycle follows directly from $\bar x ^n=0$.  To see what homology class it represents, as in the proof of \cite[Proposition 4.2]{LM00} we compare the free $ A $ resolutions of $ \Ap $
$$\xymatrix{
{\cdots}\ar[r]
 & { A  \blb  A  \blb  \Ap  \brb\brb }  \ar[r] ^{\quad d_0-d_1}\ar[d] ^{f_1}
 & { A  \blb  \Ap  \brb }\ar[r] \ar[d] ^{f_0}
& { \Ap } \ar[r] \ar[d]^ =
& 0\\
0\ar[r]
& A  \ar[r]^-{\cdot p}
& A  \ar[r]
& { \Ap } \ar[r]
& 0.
}
$$
We choose a (non-additive) section $s:\ M/(p) \to M$ of reduction modulo $p$ which satisfies $s(\bar 0)=0$, $s(\bar 1) = 1$.  Defining 
$f_0\Bigl( a \blb \sum_{i=0}^{n-1} \bar m_i \bar x^i\brb \Bigr)= a \sum_{i=0}^{n-1} s(\bar m^i) x^i,$
we get a map that makes the right square in the diagram commute.  Therefore, composing $f_0$ with $d_0-d_1$ will yield an element of $ A $ which is divisible by $p$ so we can define $f_1= {1\over p}\  f_0\circ (d_0 -d_1)$.   Now $f_1$ and $f_0$ (with zero maps $f_i=0$ for $i>1$) induce a chain homotopy equivalence between the resolutions, which will continue being a chain homotopy equivalence after tensoring over $ A $ with $ \Ap $.  Evaluating this,
$$f_1(a \blb  x^n\blb \bar x ^t\brb\brb) = {1\over p}\  f_0 (a x^n \blb \bar x ^t\brb - a \blb \bar x ^{n+t}\brb ) = 
 {1\over p}\  f_0 (a x^n \blb \bar x ^t\brb ) =  {1\over p} a x^n x^t = a x^t,$$
 since $x^n=p$.
Reducing mod $p$, we get $\tau \cdot \bar a \bar x^t  \in\Tor_1^ A  ( \Ap ,  \Ap )=\tau\cdot  \Ap $.
\end{proof}

\section{Identifying the Torsion} \label{lastsec}
\begin{thm}
\label{localmainthm}
Let $D$ be a finite-dimensional division algebra over $\Q_p$ and let $A$ be a maximal order in $D$.  Let $L$ be the center of $D$, and let $S$ be its valuation ring and $\F_S$ the residue field of $S$; we can write $S=R[\pi]/P(\pi)$, for $R$ unramified over $\Z_p$, $\pi$ a uniformizer of $S$, and $P$ an Eisenstein polynomial.   Assume that $D$ is of degree $n$ over $L$ (that is, of dimension $n^2$ over $L$).
Then
$$
\pi_*( \THH(A)^\wedge_p)\cong
\begin{cases}
S \oplus \F_S^{\oplus n-1} & *=0\\ 
S/(aP'(\pi)) & *=2a-1 > 0\\
\F_S^{\oplus n-1} &*=2a>0\\
 0 & *<0.\\
 \end{cases}
$$

\end{thm}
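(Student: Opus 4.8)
For $*<0$ the statement follows from the connectivity of $\THH$, and $\pi_0(\THH(A)^\wedge_p)=(A/[A,A])^\wedge_p\cong S\oplus\F_S^{\oplus n-1}$ by \cite{L95} (Equation (\ref{HHlocal})), since $S$ is $p$-complete and $\F_S$ is finite. For $*>0$ the groups $\THH_*(A)$ are torsion (recalled in the introduction) and finitely generated over $\Z_p$ (via the spectral sequence (\ref{SpecSeq}), as $A$ is module-finite over $\Z_p$), so $\pi_*(\THH(A)^\wedge_p)$ is a finite $p$-group. To identify it the plan is to feed Theorem \ref{onemodponenot} into the long exact sequence
$$\cdots\to \pi_i(\THH(A)^\wedge_p)\xrightarrow{\ p\ }\pi_i(\THH(A)^\wedge_p)\to \THH_i(A,A/(p))\to\pi_{i-1}(\THH(A)^\wedge_p)\xrightarrow{\ p\ }\cdots$$
for multiplication by $p$ on $\THH(A)^\wedge_p$ (whose middle term is $\pi_i(\THH(A);\F_p)=\THH_i(A,A/(p))$), and to run the associated Bockstein spectral sequence: its $E^1$-page is $\THH_*(A,A/(p))$, its $E^\infty$-page is the free part of $\pi_*(\THH(A)^\wedge_p)$ reduced mod $p$ ($S/(p)$ in degree $0$ and nothing above), and its differentials record the $p$-power orders of the torsion. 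By Theorem \ref{onemodponenot} this $E^1$-page splits as a graded $S$-module into a copy of $\THH_*(S,S/(p))$ (supported in the degrees of (\ref{THHSSpwild})--(\ref{THHSSptame})) and a copy of $\F_S^{\oplus n-1}$ in every degree $\ge 0$; I would treat these two pieces and their interaction in turn.

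For the $\THH_*(S,S/(p))$-piece I would argue by naturality. When $p\nmid n$, Corollary \ref{tamedivisionalgebamodp} identifies it with the image of a split inclusion $\THH_*(S,S/(p))\hookrightarrow\THH_*(A,A/(p))$ induced by $S\hookrightarrow A$, so the corresponding sub-spectral-sequence is the Bockstein spectral sequence of $\THH(S)$ and contributes exactly $\pi_*(\THH(S)^\wedge_p)$ --- namely $S$ in degree $0$ and $S/(aP'(\pi))$ in degree $2a-1$, recorded in Equation (\ref{THHS}). When $p\mid n$ there is no such splitting, but Theorem \ref{onemodponenot} supplies a map $\THH_*(T,T/(p))\to\THH_*(A,A/(p))$ induced by $T\hookrightarrow A$ that surjects onto this piece; since $T/S$ is unramified one has $\mathscr{D}_T=\mathscr{D}_S T=(P'(\pi))$, so \cite[Theorem 5.1]{LM00} gives $\THH_{2a-1}(T)\cong T/(aP'(\pi))\cong(S/(aP'(\pi)))^{\oplus n}$ and $\THH_{2a}(T)=0$ for $a>0$, and a comparison of Bockstein spectral sequences presents the $\THH_*(S,S/(p))$-part of the abutment for $A$ as a quotient of $(S/(aP'(\pi)))^{\oplus n}$ of the correct $\F_p$-dimension. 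Pinning this quotient down as exactly $S/(aP'(\pi))$ requires, as in the proof of Theorem \ref{onemodponenot}, an algebraic lemma of the type of Lemma \ref{extensionimage} (cyclicity of a quotient module) together with Corollary \ref{wilddivisionalgebamodp} to control $T\hookrightarrow A$ on each level; this is the main obstacle.

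For the $\F_S^{\oplus n-1}$-summands the goal is to show they contribute a single copy of $\F_S^{\oplus n-1}$ to $\pi_{2a}(\THH(A)^\wedge_p)$ for every $a\ge 0$ and nothing in odd degrees --- equivalently, that the first Bockstein differential carries the copy of $\F_S^{\oplus n-1}$ in odd degree $2a+1$ isomorphically onto the copy in degree $2a$, with no higher differential among these summands and no differential linking them to the $\THH_*(S,S/(p))$-piece (so in particular the even-degree torsion is killed by $p$). The non-linking and the base of the induction I would extract from the chain level: the small complex (\ref{small}), its mod-$p$ reduction (\ref{psmall}), the explicit Brun differential $d^2$ of Proposition \ref{keylemma}, and the $\THH_*(\F_S)$-module structure of Proposition \ref{modp} all respect the decomposition $\THH_*(A,A/(p))\cong\THH_*(S,S/(p))\oplus\F_S^{\oplus n-1}$, while $\F_S^{\oplus n-1}$ is the torsion of $\HH_0(A)$, which is killed by $p$ and anchors the induction. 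Assembling the two pieces gives $\pi_*(\THH(A)^\wedge_p)\cong\pi_*(\THH(S)^\wedge_p)\oplus M_*$, where $M_*$ is $\F_S^{\oplus n-1}$ in each even degree $\ge 0$ and $0$ in odd degrees --- the claimed answer. The only extension problem left by the spectral sequence is in odd degrees $2a-1$ with $p\nmid d$ but $p\mid a$, where $\pi_{2a-1}(\THH(A)^\wedge_p)$ is a nontrivial extension of part of $\THH_{2a-1}(S,S/(p))$ by the $\F_S^{\oplus n-1}$ surviving from degree $2a$; it is resolved exactly as the corresponding extension in the proof of Theorem \ref{onemodponenot} --- by \cite[Proposition 5.6]{LM00} when $p\nmid n$ and by Lemma \ref{extensionimage} when $p\mid n$ --- consistently with $S/(aP'(\pi))$ being genuinely larger than $S/(P'(\pi))$ whenever $p\mid a$.
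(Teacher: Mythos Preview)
Your approach is genuinely different from the paper's, and as written it has a real gap.

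The paper does not run the Bockstein spectral sequence. Instead it constructs a $p$-completed variant of the spectral sequence of \cite{Li00},
\[
E^2_{r,s}=\HH^S_r\bigl(A;\pi_s(\THH(S;A)^\wedge_p)\bigr)\Longrightarrow\pi_{r+s}\bigl(\THH(A)^\wedge_p\bigr),
\]
with Hochschild homology taken over the center $S$ itself, so that by Larsen's calculation (now with trivial different) the odd Hochschild rows vanish; see (\ref{HHoverS}). Theorem~\ref{onemodponenot} is then used only to read off the $S$-ranks of the abutment, and the comparison with $S\hookrightarrow A$ (if $p\nmid n$) or $T\hookrightarrow A$ (if $p\mid n$) shows that all of $\pi_{2a-1}(\THH(A)^\wedge_p)$ already lives in the $0$th column. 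The virtue of this spectral sequence is positional: once that is known, element-counting forces every $\F_S^{\oplus n-1}$ in odd total degree off the $0$th column to be killed by an incoming differential, leaving exactly one $\F_S^{\oplus n-1}$ per positive even diagonal, sitting in a single bidegree, so there is no extension ambiguity and the $S$-module structure is visible.

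Your Bockstein argument recovers the $S$-ranks correctly (the long exact sequence together with $|\pi_i[p]|=|\pi_i/p|$ for finite $p$-groups does let the rank induction close), but ranks do not determine the groups: you must still show that the $\F_S^{\oplus n-1}$ in degree $2a+1$ supports $d^1$ rather than some higher $d^r$, i.e.\ that $\pi_{2a}(\THH(A)^\wedge_p)$ is killed by $p$. Your justification does not do this. Proposition~\ref{keylemma} computes $d^2$ in the \emph{Brun} spectral sequence and Proposition~\ref{modp} gives the $\THH_*(\F_S)$-module structure on $\THH_*(A/(p))$; neither is the Bockstein on $\THH_*(A;A/(p))$, and neither shows that the Bockstein respects your direct-sum decomposition. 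Naturality under $S\hookrightarrow A$ (when $p\nmid n$) constrains the Bockstein only on the \emph{image} of $\THH_*(S;S/(p))$---there is no ring map $A\to S$ providing a retraction---so the Bockstein on the complementary $\F_S^{\oplus n-1}$ can a priori land anywhere; the comparison with $T$ when $p\mid n$ likewise bounds differentials only on classes in the image. Your ``anchor'' at $\HH_0(A)$ handles $\pi_0$ but does not propagate upward. Finally, the last paragraph is off target: the Bockstein spectral sequence converges to the free part mod $p$ and records $p$-power orders, so there is no ``extension of part of $\THH_{2a-1}(S,S/(p))$ by the $\F_S^{\oplus n-1}$ surviving from degree $2a$'' in your setting; the extension you have in mind is the one inside $\THH_*(A;A/(p))$ resolved in Theorem~\ref{onemodponenot}, not one in $\pi_*(\THH(A)^\wedge_p)$. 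What the paper's spectral sequence over $S$ supplies, and your Bockstein does not, is precisely the positional information that pins down the even-dimensional torsion as $\F_S^{\oplus n-1}$ rather than a larger $S$-module of the same rank.
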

\begin{proof}
We will use the following spectral sequence:

\begin{lem}
If $S$ is a commutative $\Z_p$-algebra and $A$ is an $S$-algebra, we have a spectral 
sequence of $S$--modules
\begin{equation}
\label{goodseq}
E_{r,s}^2 = \HH_r^{S} (A;\pi_s (\THH(S; A) ^\wedge_p) ) \Rightarrow \pi_{r+s} (\THH(A)^\wedge_p ),
\end{equation}
which is multiplicative with respect to multiplication by $\pi_* (\THH(S) ^\wedge_p) $.  This multiplication
is the obvious one on the coefficients $\pi_* (\THH(S; A) ^\wedge_p) $ in the $E^2$-term, but it comes from the multiplication $\THH(S)\wedge \THH(A) \to \THH(A)$ that we have because
$S$ is the center of $A$.
\end{lem}

\begin{proof}
We take a model of $HS$ cofibrant over the sphere spectrum, and a model of $HA$ cofibrant over $HS$.  We start with
the spectral sequence of \cite[Corollary 3.3 ]{Li00},
\[
E_{r,s}^2 = \HH_r^{S} (A;\THH_s(S; A))\Rightarrow \THH_{r+s}(A).
\]
It is too large because $ \pi_*(\THH(S; A))$ contains large $\Q_p$-vector spaces in addition to 
$\pi_*( \THH(S; A)^\wedge_p)\cong \pi_*( \THH(S)^\wedge_p)\otimes_S A$, which is the part we are interested in.  The spectral sequence above comes from identifying 
$$\THH(A) 
\! \simeq \! HA \wedge_{HA\wedge HA^\op} HA 
\simeq HA  \wedge_{HA\wedge_{HS}  HA^\op}  ((HA\wedge_{HS}  HA^\op)
\wedge_{HA\wedge HA^\op} HA)$$
and observing that the  map induced by the obvious inclusions
$$ THH(S;A) = HS  \wedge_{HS\wedge HS^\op} HA 
\to (HA\wedge_{HS}  HA^\op)
\wedge_{HA\wedge HA^\op} HA$$
is a weak equivalence.  This gives a weak equivalence
$$\THH(A) \simeq HA  \wedge_{HA\wedge_{HS}  HA^\op}  THH(S;A).$$
If we $p$-complete $THH(S; A)$, since $ HA$ and $HA\wedge_{HS}  HA^\op$ are already $p$-complete and the maps between them commute with the $p$-completion, we will get that
$$\THH(A) ^\wedge_p\simeq HA  \wedge_{HA\wedge_{HS}  HA^\op}  \THH(S;A)^\wedge_p,$$
yielding the spectral sequence (\ref{goodseq})
that we need.  
 
 To see what the multiplication $\THH(S)\wedge \THH(A) \to \THH(A)$ does, we use the naturality of our construction for free $S$-algebras and compare the spectral sequence (\ref{goodseq}) with the analogous one we would get for $S$ in place of $A$: 
 $$E_{r,s}^2 = \HH_r^{S} (S;\pi_s (\THH(S; S) ^\wedge_p) ) \Rightarrow \pi_{r+s} (\THH(S)^\wedge_p ).
$$
Note that $E_{r,s}^2=0$ unless $r=0$, and we only have the copy of $\pi_*(\THH(S)^\wedge_p $ in the $0$'th column.
\end{proof}

We recall from  \cite[Theorem 5.1]{LM00} that 
$$
\pi_*( \THH(S)^\wedge_p)\cong
\begin{cases}
S & *=0\\ 
S/(aP'(\pi)) & *=2a-1 > 0\\
0 &*=2a>0\  \mathrm{ or}\ *<0.\\
 \end{cases}
$$
In odd dimensions, the result there  actually gives the inverse different ideal modulo $aS$, but that is isomorphic to $S$ modulo $a$ times the different ideal $(P'(\pi))$.  Since $A$ is free over $S$, this gives
$$\pi_*( \THH(S;A)^\wedge_p)\cong\pi_*(( \THH(S)\wedge_{HS} HA)^\wedge_p) \cong \pi_*( \THH(S)^\wedge_p)\otimes_S A .$$  Thus in the spectral sequence (\ref{goodseq}), we have
$$
E^2_{r, s}\cong
\begin{cases}
\HH^S_r(A) & s=0\\ 
\HH^S_r(A; A/(aP'(\pi))) & s=2a-1 > 0\\
0 &s=2a>0\  \mathrm{ or}\ s<0.\\
 \end{cases}
$$
In  Equation (\ref{HHlocal1}) above we used Larsen's result from \cite[Theorem 3.5]{L95} with the ground ring $R$ equal to the ring of integers in the maximal unramified extension of $\Q_p$ that is contained in  the center $L$ of $D$. But we can use as the ground ring the ring of integers in any extension of $\Q_p$ over which $L$ is purely ramified and in particular, we can use  the ring of integers in $L$ itself and take $R=S$.  In that case, \cite[Theorem 3.5]{L95} tells us that
\begin{equation}\label{HHoverS}
\HH^S_*(A)\cong
\begin{cases}
T/\pi \ker(\Tr_{T/S}) \cong S \oplus \F_S^{\oplus n-1} & *=0\\ 
0 & *=2a-1 > 0\\
\ker(\Tr_{T/S}) / \pi \cong \F_S^{\oplus n-1} &*=2a>0.\\
 \end{cases}
\end{equation}
Comparing with Equation (\ref{HHlocal1}), the difference here is that the uniformizer of $S$ remains $\pi$, but its minimal polynomial over $R=S$ is now $x-\pi$, and so has derivative equal to $1$, making the odd dimensional Hochschild homology vanish.  Using this and the Universal Coefficient Theorem over $S$, for any nontrivial ideal $(\pi^i)$ in $S$,
$$
\HH^S_*(A; A/(\pi^i))\cong
\begin{cases}
S/(\pi^i) \oplus \F_S^{\oplus n-1} & *=0\\ 
\F_S^{\oplus n-1}& *> 0.\\
 \end{cases}
$$

We now have to separate into two cases, according to whether the ideal $(aP'(\pi))$ is the trivial ideal or not for different $a$'s.  In the case where the center $S$ is ramified over $\Z_p$,  the ideal $(P'(\pi))$ is already nontrivial, and so for any integer $a\geq 1$, the ideal $(aP'(\pi))$ is nontrivial as well.  In that case,
the spectral sequence (\ref{goodseq}) takes the form 
\begin{equation}\label{goodseqramified}
E^2_{r, s}\cong
\begin{cases}
S \oplus \F_S^{\oplus n-1} & s=r=0\\ 
S/(aP'(\pi))  \oplus \F_S^{\oplus n-1} & s=2a-1  \  \mathrm{
 and}\ r=0\\ 
\F_S^{\oplus n-1}& s=2a-1  \  \mathrm{
 and}\ r>0, \  \!\mathrm{
 or}\ \!s=0 \  \mathrm{
 and}\ r=2b>0\\
0 &s=2a>0, \  \mathrm{
 or}\ s=0 \  \mathrm{
 and}\ r=2b-1>0.\\
 \end{cases}
\end{equation}
On the other hand, if the center $S$ is unramified over $\Z_p$, $(P'(\pi))=S$ and so the ideal $(aP'(\pi))$ will be nontrivial only when  $a$ is a multiple of $p$.  In that case, we get
\begin{equation}\label{goodsequnramified}
E^2_{r, s}\cong
\begin{cases}
S \oplus \F_S^{\oplus n-1} & s=r=0\\ 
S/(ap)  \oplus \F_S^{\oplus n-1} & s=2ap-1 \  \mathrm{ and}\ r=0\\ 
\F_S^{\oplus n-1}& s=2ap-1 \  \mathrm{ and}\ r>0, \  \!\mathrm{ or}\ \!s=0 \  \mathrm{ and}\ r=2b>0\\
0 &s=2a>0\  \mathrm{ or}\  s=2a-1  \  \mathrm{ and }\  p\nmid a, \\
 & \qquad  \mathrm{ or}\ s=0 \  \mathrm{ and}\ r=2b-1>0.\\
 \end{cases}
\end{equation}
 Since $S$ is the center of $A$, these are spectral sequences of $S$-modules and it is meaningful to talk of $S$-ranks.  In both cases, we have seen that $\pi_*( THH(A) ^\wedge _p)$ will consist entirely of torsion in positive dimensions, and of  torsion  and one copy of $S$ in dimension zero.   Any $S$-torsion of rank $m$ in $\pi_i( THH(A) ^\wedge _p)$ will give $S$-torsion of rank $m$ in dimensions $i$ and $i+1$ in $$\pi_*( THH(A)^ \wedge _p; \F_p) \cong \pi_*( THH(A); \F_p)  \cong \THH_*(A; A/p).$$
So we read backwards from Theorem \ref{onemodponenot} which describes the homotopy groups with mod $p$ coefficients. 
 In the ramified case, it says that $\THH_i(A; A/p)$ has rank $n$ over $S$ for any $i\geq 0$, meaning (because of the copy of $S$ in dimension zero) that $\THH_i(A)$ should have rank $n$ for $i=0$, rank $1$ for odd positive $i$, and rank $n-1$ for even positive $i$.  In the unramified case, by similar analysis  $\THH_i(A)$ should have rank $n$ for $i=0$, rank $1$ in dimensions is $2ap-1$ for $a>0$, rank $n-1$ for even positive $i$, and rank zero in odd dimensions that are not of the form $2ap-1$.

To understand exactly what the homotopy groups are, we will separate the discussion into cases according to whether $p$ divides $n$, the degree of $A$ over $S$, and according to whether $S$ is ramified.  The cases where $p$ does not divide the degree $n$ are easier, because  then in the calculation of  $\pi_*( THH(A) ^\wedge _p)$  using the spectral sequence (\ref{goodseq}) we can see in the $0$'th column of the spectral sequence a copy of $\pi_*( THH(S)^ \wedge _p)$ which exactly gives all the odd-dimensional homotopy groups in 
$\pi_*( THH(A) ^\wedge _p)$.  Therefore all other groups in odd total dimension have to be cancelled, but we will argue inductively that all outgoing spectral sequence differentials from them have to be trivial, and we will show by counting elements that the only way the superfluous odd total dimension groups can all be cancelled is if every incoming spectral sequence differential that can be nontrivial is as nontrivial as possible.  After that, all that remains in any positive even total dimension is $\F_S^{\oplus n-1}$, the smallest possible $S$-module of rank $n-1$, and so having a good understanding the $S$-torsion in odd dimensions completely determines the $S$-torsion in even dimensions.

When $p$ does divide $n$, we can still say that all of $\pi_{2a-1}( THH(A) ^\wedge _p)$ comes from the 
$0$'th column of the spectral sequence (\ref{goodseq}); as explained above, we also know that it has $S$-rank $1$.  This is shown to be enough to force the same cancellation pattern as in the case where $p$ does not divide $n$, and thus determines the even torsion as before.

\smallskip\noindent{\bf {The case $d>1$, $p\nmid n$:}}

Since $p\nmid n$, the $E^2$-term in the spectral sequence (\ref{goodseqramified}) contains a copy of 
$\pi_*( THH(S)^ \wedge _p)$ in the $0$'th column which is the isomorphic image of $\pi_*( THH(S)^ \wedge _p)$ under the map induced on the spectral sequence (\ref{goodseq}) by the inclusion $S\hookrightarrow A$.  This is because, as in the proof of Corollary \ref{tamedivisionalgeba}, we have an isomorphism of $S$-modules $T\cong S \oplus \ker(\Tr_{T/S})$ with the copy of $S$ equal to the image of the inclusion $S\hookrightarrow T$, that is: $\ker((1-\sigma^{-1})$.  Using this decomposition with the result in equation (\ref{HHoverS}), we see that it is in fact the image of $S$ under  the inclusion $S\hookrightarrow T $ that survives to be the $S$ in $\HH_0^S(A)\cong S\oplus \F_S^{\oplus n-1}$.  Larsen's comparison between the small complex and the Hochschild complex is the standard inclusion $T\hookrightarrow A$ in dimension zero, so  this $S$ in $\HH_0^S(A)$ is the image of $S=\HH_0^S(S)$ under  the inclusion $S\hookrightarrow A. $ Similarly, in $\HH_0^S(A; A/(\pi^i)) \cong S/(\pi^i)\oplus \F_S^{\oplus n-1}$ the $S/(\pi^i)$ is the image of the included $S$.

By Theorem \ref{onemodponenot} we know that if  $p\nmid n$, the image of 
$\THH_{2a-1}(S, S/(p)) \to \THH_{2a-1}(A, A/(p))$ is the part of $ \THH_{2a-1}(A, A/(p))$ which does not come from $\Tor(\THH_{2a-2}(A), \F_p)$: starting in dimension zero, we see that the $\F_S^{\oplus n-1}$'s in $\THH_{2a}(A, A/(p))$ come from $\THH_{2a}(A)\otimes \F_p$ but those in $\THH_{2a-1}(A, A/(p))$ come from $\Tor(\THH_{2a-2}(A), \F_p)$.  So we deduce that the generator over $S$ of $\THH_{2a-1}(A)$ comes from the image of $\THH_{2a-1}(S)$ under the inclusion $S\hookrightarrow A$.  That image is by the previous paragraph exactly the $\THH_{2a-1}(S)$ summand in $E^2_{0, 2a-1}$.  Once we have accounted for the generator over $S$ of $\THH_{2a-1}(A)$ in $E^2_{0, 2a-1}$,  we know that all other classes in  total dimension $2a-1$ have to die  by the time we get to the $E^\infty$ term, both the $\F_S^{\oplus n-1}$ in $E^2_{0, 2a-1}$ and the classes in $E^2_{i, 2a-1-i}$ 
for $i>0$.  A priori there could have been a nontrivial extension of $E^\infty_{0, 2a-1}$ by some $E^\infty_{i, 2a-1-i}$ 
for $i>0$ which is still of rank one over $S$, but if the generator of $\THH_{2a-1}(A)$ over $S$ is already in $E^\infty_{0, 2a-1}$, this cannot happen.

We work inductively: clearly $E^2_{0,0}\cong E^\infty_{0,0}\cong \THH_0(A)$.  In total dimension $1$, we know that the $\F_S^{\oplus n-1}$ in $E^2_{0,1}$ must die before the $E^\infty$ term, and the only way this could happen is for $d^2:E^2_{2,0} \to E^2_{0,1}$ to be injective and onto this summand.  That means that all that is left in total dimension $2$ is the $\F_S^{\oplus n-1}$ in $E^2_{1,1}$, which is the smallest possible $S$-module of rank $n-1$ so there cannot be any possible nontrivial incoming differentials from total dimension $3$ to make it yet smaller. 

More generally, at each stage $a$ with $ a>1$, it turns out that most of the elements in total dimension $2a-2$ were  used to eliminate elements of total dimension $2a-3$ and all that is left in total dimension $2a-2$  is a copy of  $\F_S^{\oplus n-1}$ in $E^2_{1,2a-3}$ which cannot be shrunk further while maintaining a rank of $n-1$ over $S$.  Therefore there can be no nontrivial spectral sequence differentials from total dimension $2a-1$ into total dimension $2a-2$, and all classes of total dimension $2a-1$ that need to be eliminated  need to be hit by spectral sequence differentials from total dimension $2a$.
In total dimension $2a-1$, we have $\THH_{2a-1}(S)$ in $E^2_{0, 2a-1}$ and $a$ copies of $\F_S^{\oplus n-1}$ that have to be eliminated, in 
$E^2_{0, 2a-1},\  E^2_{2, 2a-31},\  \cdots,\ E^2_{2a-2, 1}$.  In total dimension $2a$, we have $a+1$ copies of $\F_S^{\oplus n-1}$, in 
$E^2_{1, 2a-1},\  E^2_{3, 2a-31},\  \cdots,\ E^2_{2a-1, 1}$ and also in $E^2_{2a, 0}$.  Note however that there cannot be any nontrivial spectral sequence differentials $d^r$ out of $E^r_{1, 2a-1}$ for $r\geq 2$, so that copy has to last to $E^\infty$ and be the rank $n-1$ $S$-module we need in $\THH_{2a}(A)$.  But the remaining $a$ copies all need to be used to eliminate the $a$ copies in total dimension $2a-1$.

In order to utilize the $a$ copies in total dimension $2a$ to eliminate classes in total dimension $2a-1$, we must have  that $d^2: E^2_{2,0} \to E^2_{0, 1}$ is injective onto the $\F_S^{\oplus n-1}$ summand, that
$\xymatrix{d^2: E^2_{2b,0} \ar[r]^\cong   &E^2_{2b-2, 1}}$ for all $b>1$, and that $d^2$ vanishes everywhere else.  And afterwards, that $d^3$ is as nontrivial as it can be, meaning that $d^3: E^3_{3,2a-1} \to E^3_{0, 2a+1}$ is injective onto the $\F_S^{\oplus n-1}$ summand for all $a>1$, that
$\xymatrix{d^3: E^3_{2b+1, 2a-1} \ar[r]^\cong  &E^3_{2b-2, 2a+1}}$ for all $b>1$, $a>0$, and that $d^3$ vanishes everywhere else.  No nontrivial differentials from total dimension $2a$ into total dimension $2a-1$ are possible beyond that, so we get $S$-module isomorphisms
\begin{equation}\label{bestcase}
E^\infty_{r, s}\cong
\begin{cases}
S \oplus \F_S^{\oplus n-1} & r=s=0\\ 
S/(aP'(\pi)) & r=0\ \mathrm {and }\  s=2a-1 \\
\F_S^{\oplus n-1} &r=1\  \mathrm {and }\  s=2a-1 \\
 0 & \mathrm {otherwise.}\\
 \end{cases}
\end{equation}
Since every diagonal contains exactly one nontrivial $S$-module, there are no possible extensions and the result of Theorem \ref{localmainthm} follows in this case.

\smallskip\goodbreak\noindent{\bf {The case $d>1$, $p\mid n$:}}

In the previous case, we used the fact that $p\nmid n$ to show that $\THH_{2a-1}(A)$ consists only of the isomorphic image of $\THH_{2a-1}(S)$ in $E^2_{0, 2a-1} $ in the spectral sequence (\ref{goodseqramified}).  When $p\mid n$, that copy is no longer the isomorphic image of $\THH_{2a-1}(S)$ under the map induced by the inclusion $S\hookrightarrow A$, but we can still show that  $\THH_{2a-1}(A)$ contains only a copy of $\THH_{2a-1}(S)$ that sits inside $E^2_{0, 2a-1} $, and that was the essential feature that allowed us to deduce what all the differentials of the spectral sequence must do.

We now look at the map induced on the spectral sequence (\ref{goodseq})  by the inclusion $T\hookrightarrow A$, instead.  Recall that $T$ is unramified over $S$, so $\HH_*^S(T) \cong T$  when $*=0$ and vanishes everywhere else; similarly, 
$\HH_*^S(T; T/(\pi^i)) \cong T/(\pi^i)$  when $*=0$ and vanishes everywhere else.  So the spectral sequence (\ref{goodseq}) for $T$ is concentrated in the $0$'th column and gives the result we know, that $\THH_*(T)\cong T\otimes_S\THH_*(S)$.  

This means that the image of $\pi_*( THH(T)^ \wedge _p)$  in $\pi_*( THH(A)^ \wedge _p)$  under the map induced by the inclusion $T\hookrightarrow A$ is all in the $0$'th column of  the spectral sequence (\ref{goodseqramified}).  This explains  why all the nontrivial $E^2_{i, 2a-1-i}$'s for $i>0$ need to disappear before the $E^\infty$ term.
As far as $E^2_{0, 2a-1}$ goes, we know that $E^\infty_{0, 2a-1}$ is a submodule of $\pi_*( THH(A)^ \wedge _p)$ which has rank one over $S$.  Therefore  $E^\infty_{0, 2a-1}$ has rank one over $S$, as well.  But $E^2_{0, 2a-1}$ has rank $n$ over $S$, and of course all the outgoing spectral sequence differentials originating from it cannot hit anything so they are zero.  So an $S$-submodule of rank $n-1$ inside $E^2_{0, 2a-1}$ must be eliminated by incoming spectral sequence differentials.  The above argument shows, by counting elements, that the incoming differential that could cancel these (while making sure all the $E^2_{i, 2a-i}$'s for $i>0$ are cancelled) is $d^2: E^2_{2,0} \to E^2_{0, 1}$ if $a=1$ or $d^3: E^3_{3,2a-3} \to E^3_{0, 2a-1}$ if $a>1$.  The source of these differentials is, in both cases, 
$\F_S^{\oplus n-1}$; the target is $S/(aP'(\pi))  \oplus \F_S^{\oplus n-1}$.  If the cokernel needs to have rank one over $S$, the cokernel has to be isomorphic to $S/(aP'(\pi)) $.  If it happens that $S/(aP'(\pi)) \cong\F_S$, it might be that it is not the separate $S/(aP'(\pi)) $ that remains in the cokernel but one of the 
 $\F_S$'s in $\F_S^{\oplus n-1}$, but it is nevertheless a copy of $S/(aP'(\pi)) $ that remains.
 
 So the same cancellation pattern that we had in the $p\nmid n$ case must hold in the $p\mid n$ case, yielding the same $E^\infty$ result as Equation (\ref{bestcase}).
 
\smallskip\goodbreak\noindent{\bf {The case $d=1$:}}

If $S$ is unramified over $\Z_p$, or in the notation we are using: $S=R$, the spectral sequence (\ref{goodseq}) takes the form (\ref{goodsequnramified}), which is much sparser than (\ref{goodseqramified}).  However, the argument  that explains why $\pi_{2a-1}( THH(A)^ \wedge _p)$ should contain only a copy of $\pi_{2a-1}( THH(S)^ \wedge _p)$ in $E^2_{0, 2a-1}$ of (\ref{goodseq}) is the same.  As before, the argument depends on whether that
copy of $\pi_{2a-1}( THH(S)^ \wedge _p)$
is the isomorphic image of $\pi_{2a-1}( THH(S)^ \wedge _p)$ under the inclusion $S\hookrightarrow A$ as in the case $p\nmid n$ or not.  Note that when $d=1$,  $\pi_{2a-1}( THH(S)^ \wedge _p)\cong 0$ if $p\nmid a$ but is a rank one $S$-module if $p\mid a$, so the argument would sometimes be used about a rank one module and sometimes about the  zero module.  Also, the argument that shows by induction on $a$ that the only way to get nothing but a copy of $\pi_{2a-1}( THH(S)^ \wedge _p)$ in $E^2_{0, 2a-1}$ in total dimension $2a-1$ in the $E^\infty$ term  is to have maximally nontrivial spectral sequence differentials from total dimension $2a$ remains the same argument.  

The difference in the unramified case is what the pattern of the maximally nontrivial differentials is.
Here, $d^2, d^3, \ldots, d^{2p-1}$ all have to be trivial.  The first potentially nontrivial differential is $d^{2p}$, where we must have that $d^{2p}: E^{2p}_{2p,0} \to E^{2p}_{0, 2p-1}$ is injective onto a direct summand isomorphic to $\F_S^{\oplus n-1}$, that for all $b>p$
$\xymatrix{d^{2p}: E^{2p}_{2b,0} \ar[r]^\cong   &E^{2p}_{2b-2p, 2p-1}}$, and that $d^{2p}$ vanishes everywhere else.  Afterwards, we have that 
$d^{2p+1}: E^{2p+1}_{2p+1,2ap-1} \to E^{2p+1}_{0, 2(a+1)p-1}$ is injective onto a direct summand isomorphic to $\F_S^{\oplus n-1}$ for all $a>0$, that  
$\xymatrix{d^{2p+1}: E^{2p+1}_{2b+1, 2ap-1} \ar[r]^\cong  &E^{2p+1}_{2b-2p, 2(a+1)p-1}}$ for all $b>p$,  $a>0$, and that $d^{2p+1}$ vanishes everywhere else.  And then no further nontrivial differentials are possible and  $E^{2p+2}\cong E^\infty$. We get
\begin{equation}\label{notbadcase}
E^\infty_{r, s}\cong
\begin{cases}
S \oplus \F_S^{\oplus n-1} & r=s=0\\ 
S/(a) & r=0\ \mathrm {and }\  s=2a-1 \\
\F_S^{\oplus n-1} &r=2b,\ 0< b<p,\  \mathrm {and }\  s= 0\\
\F_S^{\oplus n-1} &r=2b+1,\ 0\leq b<p,\  \mathrm {and }\  s=2ap-1,\ a>0 \\
0 & \mathrm {otherwise,}\\
 \end{cases}
\end{equation}
and since every diagonal contains at most one nontrivial $S$-module, there are no possible extensions and the result of Theorem \ref{localmainthm} follows in this case as well.
 \end{proof}

\section{Combining the Local Calculations into the Final Result} \label{verylastsec}

We now combine the results of Theorem \ref{localmainthm} over all localizations of the center $V$ at nontrivial prime ideals $\P\subset V$.  As explained in the introduction and worked out in Corollary \ref{globalcor}, $\THH_0(U)\cong \HH_0(U)$ and we know what it is by the work of Larsen in \cite{L95}.  For $*>0$,  $\THH_*(U)$  is a $V$-module that consists entirely of torsion, so it is equal to the direct sum over all primes $p\in\Z$ of the $p$-torsion in the homotopy groups of
$\THH(U)^{\wedge}_p\simeq \THH(U^\wedge_{(p)})^{\wedge}_p$  \cite[Addendum 6.2]{HM97}.  Since 
$$U^\wedge_{(p)} \cong\bigoplus_{\P\subset V\mathrm{\ prime,}\ (p)\subseteq \P} U^\wedge_\P,$$
 this  
is in turn 
equal to the direct sum over all primes $\P\subset V$ with $(p) \subseteq \P$ of the $\P$ torsion in $\pi_*(\THH(U^\wedge_{\P})^\wedge_p)$, so we need to understand that for all such $p$ and $\P$.  

If $B$ is the simple algebra over $\mathbb{Q}$ that $U$ is a maximal order in and $C$ its center (which has $V$ as its ring of integers), the completion $B^\wedge_\P$ is a central simple $C^\wedge_\P$-algebra, and so $B^\wedge_\P$ is isomorphic to an $i_\P\times i_\P$ matrix ring on some central division algebra $D_\P$ over $C^\wedge_\P$, of degree $e_\P$.    But then by \cite[Theorem X.1]{Weil}, the valuation ring $U^\wedge_P$ must be  isomorphic to an $i_\P\times i_\P$ matrix ring over the valuation ring of $D_\P$.

If $e_\P=1$ then $U^\wedge_\P$ is   isomorphic to an $i_\P\times i_\P$ matrix ring over $V^\wedge_{\P}$ itself.  Then the Morita equivalence of \cite[Proposition 3.9]{BHM93} shows that  $\THH(U^\wedge_\P)$ and $\THH(V^\wedge_\P)$ are weakly equivalent.  The homotopy groups of the $p$-completion of the latter were calculated in \cite{LM00}, so we are done.  The positive dimensional homotopy groups 
$\pi_*(\THH(U^\wedge_\P)^\wedge_p))$ vanish if $*=2a>0$, and give the direct summand consisting of all the $\P$-torsion in $\THH_{2a-1}(V)$
 if $*=2a-1>0$.  Writing that the even dimensional homotopy groups are isomorphic to 
$ \F_\P ^{\oplus e_\P-1} $ is also correct, since $e_\P=1$.  When $e_\P=1$, $U$ is called unramified at $\P$, and in fact $U$ is unramified at all but finitely many prime ideals in $V$.

But if $U$ does ramify at $\P$, that is: if $e_\P>1$, we need Theorem \ref{localmainthm} for $D=D_\P$, $A$ a maximal order in $D$, 
$L=C^\wedge_\P$, $S=V^\wedge_{\P}$, and $n=e_\P$.  The ring $C^\wedge_\P$ is the center of $B^\wedge_\P$, but since the center of the ring of matrices over a ring  consists of multiples of the identity matrix by elements of the center of that ring, $C^\wedge_\P$ is also isomorphic to the center of $D_\P$.
 The result of Theorem \ref{localmainthm} in odd dimensions consists of $\pi_{2a-1}(\THH(V^\wedge_\P)^\wedge_p))$, which is the
$\P$-torsion in  $\THH_{2a-1} (V) $.  Gathering the results of Theorem \ref{localmainthm} for all prime ideals $\P\subset V$ where $U$ is ramified together with the observations above for prime ideals $\P\subset V$where $U$ is unramified,  we get

\begin{thm} \label{main}
Let $B$ be a simple algebra over $\mathbb{Q}$, and let $U$ be a maximal order in it.  Let $C$ be the center of $B$, and let $V$ be its ring of integers.   For every nontrivial prime ideal $\P\subset  V$, the completion $B^\wedge_\P$ is a central simple $C^\wedge_\P$-algebra, and so $B^\wedge_\P$ is isomorphic to a matrix ring on some central division algebra $D_\P$ over , of degree $e_\P$.  Let $\F_\P= V/\P$.  Then we have $V$-module isomorphisms
$$
\THH_*(U)\cong
\begin{cases}
V \oplus  \bigoplus_{\P\subset V \ \mathrm{prime}} \F_\P ^{\oplus e_\P-1} & *=0\\ 
 \THH_{2a-1} (V)  & *=2a-1> 0\\
  \bigoplus_{\P\subset V \ \mathrm{prime}} \F_\P ^{\oplus e_\P-1} & *=2a>0\\
 0 & *<0.
 \end{cases}
$$

\end{thm}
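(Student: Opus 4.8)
The plan is to reduce the global statement to the already-established local computations by working one rational prime at a time. First I would recall from Corollary~\ref{globalcor} that $\THH_0(U)\cong\HH_0(U)$ has the asserted form, and from the spectral sequence (\ref{SpecSeq}) together with B\"okstedt's calculation of $\THH_*(\Z)$ and Larsen's calculation of $\HH_*(U)$ that for $*>0$ the group $\THH_*(U)$ is a torsion $V$-module. A torsion module over the Dedekind domain $V$ splits canonically as the direct sum over the nonzero primes $\P\subset V$ of its $\P$-primary parts, and by \cite[Addendum 6.2]{HM97} the $p$-primary part is $\pi_*(\THH(U\otimes\Z_p)^\wedge_p)$; since $U\otimes\Z_p\cong\bigoplus_{(p)\subseteq\P}U^\wedge_\P$, it suffices to understand $\pi_*(\THH(U^\wedge_\P)^\wedge_p)$ for each nonzero prime $\P$ of $V$, together with its $V$-module structure.

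Next I would invoke the structure theory over the complete local field $C^\wedge_\P$: we have $B^\wedge_\P\cong M_{i_\P}(D_\P)$ for a central division algebra $D_\P$ of degree $e_\P$, and by \cite[Theorem X.1]{Weil} the maximal order satisfies $U^\wedge_\P\cong M_{i_\P}(A)$ for a maximal order $A$ in $D_\P$. By the Morita invariance of $\THH$ from \cite[Proposition 3.9]{BHM93}, $\THH(U^\wedge_\P)\simeq\THH(A)$. If $e_\P=1$ then $A=V^\wedge_\P$ and the local form of Theorem~1.1 of \cite{LM00} (i.e.\ \cite[Theorem 5.1]{LM00}) gives that the $\P$-primary contribution is a summand of $\THH_{2a-1}(V)$ in each positive odd degree and vanishes in positive even degrees; since $e_\P=1$ the stated even-degree answer $\F_\P^{\oplus e_\P-1}=0$ is consistent, and $U$ is unramified at all but finitely many $\P$ so only finitely many summands are nonzero. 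If $e_\P>1$, I would apply Theorem~\ref{localmainthm} with $D=D_\P$, $L=C^\wedge_\P$, $S=V^\wedge_\P$, $n=e_\P$, which yields $S\oplus\F_S^{\oplus e_\P-1}$ in degree $0$, $S/(aP'(\pi))$ in degree $2a-1>0$, and $\F_S^{\oplus e_\P-1}$ in degree $2a>0$; here $\F_S=\F_\P$, and the degree $2a-1$ group is exactly the $\P$-primary part of $\THH_{2a-1}(V)$ by \cite[Theorem 5.1]{LM00}.

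Finally I would assemble. In degree $0$ the projective summand is $V$ by Corollary~\ref{globalcor} and the torsion summand is $\bigoplus_\P\F_\P^{\oplus e_\P-1}$; in each positive even degree the contributions over all $\P$ assemble to $\bigoplus_\P\F_\P^{\oplus e_\P-1}$; and in each positive odd degree the $\P$-primary parts, from both the ramified primes (via Theorem~\ref{localmainthm}) and the unramified primes (via Morita equivalence and \cite{LM00}), assemble to $\THH_{2a-1}(V)$. I expect the main obstacle to be this last point: verifying that the $\P$-primary decomposition of $\THH_{2a-1}(V)\cong\mathscr{D}_V^{-1}/aV$ matches the local answers $S/(aP'(\pi))$ summand by summand, with no overcounting between ramified and unramified primes, and that this works as an identification of $V$-modules and not merely of abelian groups. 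This is a compatibility check between the global different ideal and its local components, which is routine number theory but needs care; Remark~\ref{indep} then records that the final answer does not depend on the choice of maximal order $U$ in $B$.
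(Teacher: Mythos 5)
Your proposal is correct and follows essentially the same route as the paper: reduce to torsion via the spectral sequence (\ref{SpecSeq}), localize using \cite[Addendum 6.2]{HM97} and the splitting $U\otimes\Z_p\cong\bigoplus_{(p)\subseteq\P}U^\wedge_\P$, pass to a maximal order $A$ in $D_\P$ by \cite[Theorem X.1]{Weil} and Morita invariance \cite[Proposition 3.9]{BHM93}, and quote \cite[Theorem 5.1]{LM00} when $e_\P=1$ and Theorem~\ref{localmainthm} when $e_\P>1$. The compatibility you flag as the main obstacle --- matching the $\P$-primary part of $\mathscr{D}_V^{-1}/aV$ with the local answer $S/(aP'(\pi))$ as $V$-modules --- is exactly what the local version \cite[Theorem 5.1]{LM00} of the global \cite[Theorem 1.1]{LM00} provides, and the paper disposes of it by that citation.
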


\begin{rem}\label{indep}
Note that the result here is entirely determined by the simple algebra $B$.  Knowing $B$ determines its center $C$ and $C$'s valuation ring  $V$.  Knowing $V$ determines its nontrivial prime ideals $\P\subset V$, and for each such $\P$, the  field $\F_\P=V/\P$ and the degree $e_\P$, which is the degree of the central $C^\wedge_\P$-division algebra $D_\P$    which $B^\wedge_\P$ is an $i_\P\times i_\P$ matrix ring over.
All of this is entirely independent of the choice of the maximal order $U$.  Thus while it is true that a simple algebra over $\mathbb{Q}$ might have different and non-isomorphic maximal orders, they must all have the same topological Hochschild homology.
 \end{rem}

\nocite{*}
\bibliographystyle{alpha}
\bibliography{THH}{}

\end{document}